\documentclass[11pt, a4paper]{article}

\usepackage{framed,multirow}

\usepackage{amssymb}
\usepackage{amsmath,bm}
\usepackage{latexsym}
\usepackage[left=1.7cm, right=1.7cm]{geometry}

\usepackage{url}
\usepackage{xcolor}
\definecolor{newcolor}{rgb}{.8,.349,.1}

\usepackage{float}
\usepackage{tikz}
\usepackage{listings}
\usetikzlibrary{decorations.pathreplacing,calc}
\newcommand{\tikzmark}[1]{\tikz[overlay,remember picture] \node (#1) {};}
\usepackage{hyperref}
\usepackage{amsthm}

\usepackage{caption}
\usepackage{subfig}
\usepackage{hyperref}
\usepackage{thmtools}
\usepackage{cleveref}
\newcommand*{\AddNote}[4]{%
    \begin{tikzpicture}[overlay, remember picture]
        \draw [decoration={brace,amplitude=.5em},decorate,ultra thick,black]
            ($(#3)!([yshift=1.5ex]#1)!($(#3)-(0,1)$)$) --  
            ($(#3)!(#2)!($(#3)-(-0,1)$)$)
                node [align=center, text width=2.5cm, pos=.5, anchor=west] {#4};
    \end{tikzpicture}}

\usepackage{algorithm}
\usepackage{adjustbox}	


\def\coloneqq{{:=}}

\newtheorem{theorem}[subsection]{Theorem}

\newtheorem{example}[subsection]{Example}
\newtheorem{lemma}[subsection]{Lemma}

\newtheorem{proposition}[subsection]{Proposition}
\theoremstyle{remark}

\begin{document}

\title{Numerical Discretisation of Hyperbolic Systems of Moment Equations Describing Sedimentation in Suspensions of Rod-Like Particles }%

\author{Sina Dahm$^\dagger$, Jan Giesselmann\thanks{Technical University of Darmstadt, Department of Mathematics, 64293 Darmstadt, Germany} , Christiane Helzel\thanks{Heinrich Heine
  University Düsseldorf, Faculty of Mathematics and Natural Sciences,
  Institut of Mathematics, 40225 D\"usseldorf, Germany} , 
}

\maketitle

\begin{abstract}
We present a numerical discretisation of the coupled moment systems,
previously introduced in Dahm and Helzel \cite{Dahm}, which
approximate the kinetic multi-scale model by Helzel and Tzavaras
\cite{Helzel1} for sedimentation in suspensions of rod-like particles
for a two-dimensional flow problem and a shear flow problem. We use a
splitting ansatz which, during each time step, separately computes the
update of the macroscopic flow equation and of the moment system. The
proof of the hyperbolicity of the moment systems in \cite{Dahm}
suggests solving the moment systems with standard numerical methods
for hyperbolic problems, like LeVeque's Wave Propagation Algorithm
\cite{LeV}. The number of moment equations used in the hyperbolic
moment system can be adapted to locally varying flow features.
An error analysis is proposed, which compares the approximation with
$2N+1$ moment equations to an approximation with $2N+3$ moment
equations. This analysis suggests an error indicator which can be
computed from the numerical approximation of the moment system with
$2N+1$ moment equations.
  In order to use moment approximations with a different number of
  moment equations in different parts of the computational domain, 
we consider an interface coupling of moment systems with different
resolution.
Finally, we derive a conservative high-resolution Wave Propagation Algorithm
for solving moment systems with different numbers of moment equations.

\end{abstract}

\section{Introduction}
We are interested in the development of numerical methods for solving
the coupled moment systems, introduced in Dahm and Helzel \cite{Dahm},
which approximate the kinetic multi-scale model by Helzel and Tzavaras
\cite{Helzel1} for sedimentation in suspensions of rod-like
particles. A typical phenomenon during the sedimentation process in
initially well-stirred suspensions of rod-like particles under the
influence of gravity is the formation of concentration
instabilities. Guazzelli and coworkers observed experimentally in
\cite{Guazzelli}, \cite{Guazzelli1} that after some time and under the
influence of gravity, the interplay of the particle orientation and
the flow field generated by the sedimenting rods leads to a
destruction of the spatially homogeneous distribution of the rods and
structural instabilities like cluster formations. While the rods are
nearly isotropic in regions with low particle densities, they are
strongly oriented in the direction of gravity in regions of particle
packages.

First numerical simulations in \cite{Dahm} have shown that different
levels of detail are required to accurately approximate the spatially
varying behaviour of the particles. While a high number of moment
equations is needed to resolve the complex flow structure in spatial
regions of the domain with clusters, few moment equations are
sufficient in spatial regions of the domain with low particle
densities. Thus, for deriving an accurate and efficient approximation
of the concentration instabilities observable during the process of
sedimentation in suspensions of rod-like particles, the number of
moment equations used in the hyperbolic moment system should be
adapted to locally varying flow features and accuracy requirements.

The central goal of this paper is to describe numerical
discretisations for the coupled moment systems which can adaptively
adjust the level of detail. Alternatively, approximations of coupled
kinetic-fluid problems have been considered which directly approximate the high
dimensional kinetic equation, see for example \cite{WegenerKuzminTurek+2023}.

In \autoref{sec:model}, we introduce the multiscale model by Helzel
and Tzavaras \cite{Helzel1} for sedimentation in suspensions of
rod-like particles and its approximation by hyperbolic systems of
moment equations derived by Dahm and Helzel \cite{Dahm}. As in
\cite{Dahm}, we restrict our considerations to shear flow and
two-dimensional flow and more importantly allow the particles to
orient only on $S^1$, i.e. the plane spanned by the direction of shear
and the direction of gravity.
In \autoref{sec:estimate} an 
  error estimate for the one-dimensional moment system coupled to the
  flow equation is proved, which motivates  an error indicator that can
  be used in practical computations. 
In \autoref{sec:numerical}, the numerical discretisation of the
one-and two-dimensional homogenous moment system is presented. Since
it was shown in \cite{Dahm} that the one- and two-dimensional moment
systems are hyperbolic, we can solve the moment systems with the
high-resolution Wave Propagation Algorithm by LeVeque \cite{LeV}. We
distinguish between a uniform approximation, in which the number of
moment equations is fixed globally for the entire domain and a
non-uniform approximation, in which the number of moment equations is
adapted locally. We derive a conservative high-resolution Wave
Propagation Algorithm for solving moment systems with different
resolution. Bulk-coupling of the moment equations with the flow
equations is challenging. For shear flow, an inhomogeneous,
one-dimensional hyperbolic system is coupled to the flow
equation, which in the simplest case reduces to the diffusion
equation. For two-dimensional flow, an inhomogeneous, two-dimensional
hyperbolic system  is coupled to the Navier-Stokes
equation. In \autoref{sec:Bulk}, we provide a splitting algorithm for
solving the coupled moment system for shear flow and two-dimensional
flow. We provide accuracy studies for several test
problems and  illustrate
that the error indicator can efficiently be used to predict regions
with larger errors. 
\section{A Kinetic Model for Sedimentation in Suspensions of Rod-Like Particles and its Approximation by Hyperbolic Systems of Moment Equations}
\label{sec:model}
In this section, we briefly introduce the general multiscale model by Helzel and Tzavaras \cite{Helzel1} for sedimentation in dilute suspensions of rod-like particles under the influence of gravity as well as the simpler models for shear flow and two-dimensional flow with director $f$ on $S^1$. Moreover, we present the hyperbolic systems of moment equations derived by Dahm and Helzel \cite{Dahm}, which represent a lower-dimensional approximation of the kinetic equation. The reader is referred to \cite{Helzel1} and \cite{Dahm} for a detailed derivation of the models presented in this section. 
\subsection{Multiscale Models for Sedimentation in Suspensions of Rod-Like Particles}
In \cite{Helzel1}, Helzel and Tzavaras describe sedimentation in
dilute suspensions of inflexible rod-like particles with a
high-dimensional multiscale model which couples a kinetic Smoluchowski
equation for the rod orientation to a Navier-Stokes equation for the
macroscopic flow. Kinetic models of this type were established by Doi
and Edwards \cite{Doi}.

The mathematical model considers rigid rod-like particles in a dilute
suspension under the influence of gravity. Let $l$ denote the constant
length of the molecules and $b$ their constant width. As we consider
slender rods, we assume $b\ll l$. Let $\nu$ denote the constant number
density of the rod-like molecules. The characteristic feature of a
dilute suspension is that the rods are well separated, as expressed by
$\nu \ll l^{-3}$. Further, we assume that the density of particles is not constant in time and space so that clusters are allowed to form. Let $d$ be the spatial dimension. In a physical space $\Omega \subset \mathbb{R}^d$, the probability distribution function $f=f(t,\pmb{x},\pmb{n})$ models the time-dependent probability that a particle with orientation $\pmb{n} \in S^{d-1}$, where $S^{d-1}$ is the unit sphere embedded in $\mathbb{R}^d$, has a center of mass at position $\pmb{x} \in \mathbb{R}^d$. Moreover, $\pmb{u}(t,\pmb{x})$ describes the macroscopic velocity field and $p=p(t,\pmb{x})$ the pressure of the solvent. The model accounts for the effects of gravity which acts in the direction of $\pmb{e}_3$, where $\pmb{e}_3$ is the unit vector in the upward direction. In non-dimensional form, the multiscale model is given as
\begin{equation}
\label{eqn:allgemein}
\begin{aligned}
\partial_t f+ \nabla_x\cdot (\pmb{u}f)&+\nabla_n\cdot (P_{\pmb{n}^{\perp}}\nabla_{\pmb{x}}\pmb{un}f)-\nabla_x\cdot ((I+\pmb{n}\otimes \pmb{n})\pmb{e}_3f) \\
&=D_r\Delta_nf+\gamma\nabla_x\cdot (I+\pmb{n}\otimes \pmb{n})\nabla_xf, \\
\sigma&=\int_{S^{d-1}}(d\pmb{n}\otimes \pmb{n}-I)f\text{d}\pmb{n}, \\
Re\left(\partial_t\pmb{u}+(\pmb{u}\cdot \nabla_x)\pmb{u}\right)&= \Delta_x\pmb{u}-\nabla_xp+\delta \gamma \nabla_x\cdot \sigma-\delta \displaystyle\int_{S^{d-1}}f\text{d}\pmb{n}\pmb{e}_3,\\
\nabla_x\cdot \pmb{u}&=0. 
\end{aligned}
\end{equation}
We give a short explanation of the different terms in model (\ref{eqn:allgemein}). 
The transport of the center of mass of the rods due to the macroscopic
velocity $\pmb{u}$ and gravity is described with the second and fourth
term in the first line.  The third term models the rotation of the
axis of the particles due to a macroscopic velocity gradient $\nabla_x
\pmb{u}$, where $P_{n^{\perp}}\nabla_x\pmb{un}$ is the orthogonal
projection of the vector $\nabla_x \pmb{u} \pmb{n}$ onto the tangent space in
$\pmb{n}$. On the right hand side of the first equation, rotation and
translation of the rod-like particles due to Brownian motion is
modeled. The dynamic of an incompressible fluid is described by a
Navier-Stokes equation which is extended by an additional elastic
stress tensor $\sigma$ and a buoyancy term. Thermodynamic consistency
justifies the form of $\sigma$ as shown in \cite{Helzel1}. Moreover,
four non-dimensional parameters are used in the full model
(\ref{eqn:allgemein}): $Re$, a Reynolds number based on the
sedimentation velocity, $D_r$, the rotational diffusion coefficient,
$\delta$, which measures the relative importance of buoyancy versus
viscous stresses and $\gamma$, which measures the relative importance
of elastic forces over buoyancy forces. We will restrict to the case
$\gamma=0$.

Physical applications of the model assume a three-dimensional physical space $\Omega \subset \mathbb{R}^3$ in which the orientation of the particles is characterised by a director $\pmb{n} \in S^2$. In this case, model (\ref{eqn:allgemein}) is a time-dependent five dimensional system of coupled partial differential equations. 
\subsubsection{Simplified Model for Two-Dimensional Flow}
\label{sec:Twoflow}
A simplification of the general model (\ref{eqn:allgemein}) can be
achieved by restricting to a two-dimensional flow and more
importantly, to restrict the orientation of the rod like particles to
take values only on $S^1$. In this case, we consider a velocity field of the form $\pmb{u}=(u(t,x,z),0,w(t,x,z))^T$. The director $\pmb{n} \in S^1$, which characterises the orientation of the rod-like particles, is restricted to take non-zero values only in the sphere embedded in the $(x,z)$-plane. We set $\pmb{n}=(\cos\theta,0,\sin\theta)$, with the angle $\theta \in [0,2\pi]$ measured counter-clockwise from the positive $x$-axis. For $\gamma=0$, the general model (\ref{eqn:allgemein}) reduces to 
\begin{equation}
\label{eqn:general}
\begin{aligned}
\partial_tf +\partial_\theta\left(\left(\left(\partial_z w- \partial_x
      u \right)\cos\theta\sin\theta- \partial_z u\sin^2\theta+
    \partial_x w \cos^2\theta\right)f\right)&& \\[6pt]
+ \partial_x\left(\left(u-\cos\theta\sin\theta\right)f\right)+\partial_z\left(w-\left(1+\sin^2(2\theta)\right)f\right)&=D_r\partial_{\theta \theta}f, \\
Re\left(\partial_tu+u\partial_xu+w\partial_zu\right)+\partial_x p
&=\partial_{xx}u
+\partial_{zz}u,\\
Re\left(\partial_tw+u\partial_xw+w\partial_zw\right)+\partial_z
p&=\partial_{xx}w
+\partial_{zz}w -\delta\displaystyle\int_0^{2\pi}f d\theta,\\
\partial_x u +\partial_z w&=0, 
\end{aligned}
\end{equation}
where $f=f(t,x,z,\theta)$ describes the distribution of the particles as a function of time $t$, space $(x,z) \in \mathbb{R}^2$ and orientation $\theta \in [0,2\pi]$. 
\subsubsection{Simplified Model for Shear Flow}
\label{sec:Shearflow}
Considering shear flow and orientations of particles restricted to
$S^1$ further simplifies the general model
(\ref{eqn:allgemein}). We assume $\pmb{u}=(0,0,w(t,x))^T$ and
$f=f(t,x,\theta)$. The most general form of the pressure which is
consistent with the ansatz of shear flow is $p=-\kappa(t) z$,
where $\kappa(t)$ can account for an externally imposed pressure
gradient. Here, see also \cite{Helzel1}, we use $\kappa = \delta \bar{\rho}$ where $\bar{\rho}$
is the total mass of suspended rods to describe an equilibrated flow.
For $\gamma = 0$, the coupled system for shear flow is given as 
\begin{equation}
\label{eqn:shear}
\begin{aligned}
\partial_tf+\partial_\theta(\partial_x w \cos^2\theta f)-\partial_x(\sin\theta\cos\theta f)&= D_r\partial_{\theta \theta}f, \\
Re\partial_t w&=\partial_{xx}w+\delta\left(\bar{\rho}-\displaystyle\int_0^{2\pi}fd\theta\right).
\end{aligned}
\end{equation}
For periodic boundary conditions the average density is constant in time, i.e.
\begin{equation*}
\bar{\rho}=\frac{1}{|\Omega|}\int_{\Omega}\int_0^{2\pi}f(t,x,\theta)d\theta=\frac{1}{|\Omega|}\int_{\Omega}\int_0^{2\pi}f(0,x,\theta)d\theta.
\end{equation*}

Note that the evolution of $f$ in (\ref{eqn:general}) and
  (\ref{eqn:shear}) is described by a conservation law. Furthermore,
  integration of $f$ over $S^1$ leads in both cases to a conservation
  law for the density as a function of space and time.
\subsection{Hyperbolic Moment System for Shear Flow}
As in Dahm and Helzel \cite{Dahm}, the dimension of the multi-scale model
(\ref{eqn:shear}) is reduced by replacing the distribution function
$f$ in the kinetic model (\ref{eqn:shear}) by a hierarchy of moment
equations.

Using the quantities 
\begin{equation}
\begin{aligned}
\rho({x},t)&\coloneqq\displaystyle\int_0^{2\pi}f({x},t,\theta)d\theta,\\
C_l({x},t)&\coloneqq\dfrac{1}{2}\displaystyle\int_0^{2\pi}\cos(2l\theta)f({x},t,\theta)d\theta, \quad l=1,2,...\\
S_l({x},t)&\coloneqq\dfrac{1}{2}\displaystyle\int_0^{2\pi}\sin(2l\theta)f({x},t,\theta)d\theta, \quad l=1,2,...\\
\end{aligned}
\label{eqn:moments}
\end{equation}
and setting $C_0=\frac{1}{2}\rho$, $S_0=0$, the infinite system of partial differential equations for shear flow is given as
\begin{equation*}
\begin{aligned}
\partial_t\rho &= \partial_xS_1, \\
\partial_tC_l &= \dfrac{1}{4}\partial_x(S_{l+1}-S_{l-1})-\dfrac{l}{2}\partial_x w (S_{l-1}+2S_l+S_{l+1})-4l^2D_rC_l, \quad l=1,2,... \\
\partial_tS_l &=\dfrac{1}{4}\partial_x\left(C_{l-1}-C_{l+1}\right)+\dfrac{l}{2}\partial_x w\left(C_{l-1}+2C_l+C_{l+1}\right)-4l^2D_rS_l, \quad l=1,2,...
\end{aligned}
\end{equation*}
The system is closed with $C_{N+1}=S_{N+1}=0$ which is based on the assumption that higher order moments decay faster than lower order moments as they correspond to a larger eigenvalue of the Laplace Operator on $S^1$. The closed moment system can be written in the form 
\begin{equation}
\label{eqn:inhomogen}
\partial_tQ(x,t)+A\partial_xQ(x,t)=\phi(Q(x,t)),
\end{equation}
where $Q(x,t)=(\rho,C_1,S_1,...,C_N,S_N)$ represents the vector of moments. The coefficient matrix $A\in \mathbb{R}^{(2N+1)\times(2N+1)}$ has the components
\begin{equation*}
\begin{aligned}
A_{1,3}:=-1, &\quad A_{3,1}:=-\frac{1}{8},\\
\left(\begin{matrix}
a_{2(N-j)-2,2(N-j)-2}&\cdots & a_{2(N-j)-2,2(N-j)+1}\\[0.5pt]
\vdots & &\vdots \\[0.5pt]
a_{2(N-j)+1,2(N-j)-2}&\cdots & a_{2(N-j)+1,2(N-j)+1}
\end{matrix}\right) &:=\left(\begin{matrix}
0 & 0 & 0 & -\frac{1}{4}\\[0.5pt]
0 & 0 & \frac{1}{4} & 0 \\[0.5pt]
0 & \frac{1}{4} & 0 & 0\\[0.5pt]
-\frac{1}{4} & 0 & 0 & 0 
\end{matrix}\right),\quad j=0,...,N-2.
\end{aligned}
\label{eqn:coefficient matrix}
\end{equation*}
Note that our definition of $A$ defines some of the components
twice. However, all those values are zero.
While the kinetic equation in (\ref{eqn:shear}) is a time-dependent partial differential equation in space and orientation, the system of moment equations (\ref{eqn:inhomogen}) depends only on space and time.
The moment system (\ref{eqn:inhomogen}) has to be considered with the diffusion equation
\begin{equation}
\label{eqn:diffusion}
Re\partial_tw=\partial_{xx}w+\delta \left(\bar{\rho}-\int_0^{2\pi}fd\theta \right).
\end{equation}
In \cite{Dahm}, we showed that the moment system (\ref{eqn:inhomogen}) is hyperbolic. Moreover, we showed that the update
\begin{equation} \label{eqn:source}
\partial_tQ(x,t)=\phi(Q(x,t))
\end{equation}
resulting from the source term of the moment system is equivalent with a spectral method for the drift diffusion equation 
\begin{equation}\label{eqn:driftDiffusion}
\partial_tf+\partial_\theta(\partial_x w \cos^2\theta f)=D_r\partial_{\theta \theta}f, 
\end{equation}
which is also described in more detail in \cite{Dahm} and
\cite{Helzel2}.

Note that the density is a conserved quantity of the moment system,
since the source term in (\ref{eqn:inhomogen}) only acts on the
higher order moments.
Furthermore,  note that the density distribution function $f(t,x,\theta)$ can
  be reconstructed from the moments using an expansion of the form
  \begin{equation}\label{eqn:expansionF}
f(t,x,\theta) = \frac{1}{2\pi} \rho(x,t) + \sum_{i=1}^\infty
 \left( \frac{2}{\pi} C_i(x,t) \cos(2 i \theta) + \frac{2}{\pi}
   S_i(x,t) \sin(2 i \theta) \right).  \end{equation}
In practical computations a finite number of moments will be used in
order to approximate $f$. 
  
\subsection{Hyperbolic Moment Systems for Two-Dimensional Flow}
\label{subsec:HypMomentSystem2d}
For two-dimensional flow, the infinite system of moment equations is given as 
 \begin{equation*}
\begin{aligned}
\partial_t \rho &= -u\partial_x\rho+\partial_xS_1-\left(w-\frac{3}{2}\right)\partial_z\rho-\partial_zC_1, \\
\partial_t C_l&= -u\partial_xC_l+\frac{1}{4}\partial_x(S_{l+1}-S_{l-1})-\partial_z\left(\frac{1}{4}C_{l-1}+\left(w-\frac{3}{2}\right)C_l+\frac{1}{4}C_{l+1}\right) \\
&-\frac{l}{2}(\partial_z w- \partial_x u)C_{l-1} +
\frac{l}{2}(\partial_z w- \partial_x u)C_{l+1}\\
&-\frac{l}{2}(\partial_z u+ \partial_x w)S_{l-1}+l(\partial_z
u-\partial_x w)S_l-\frac{l}{2}(\partial_z u+\partial_x w)S_{l+1}\\
&-4l^2D_rC_l, \quad l=1,...,N \\
\partial_t S_l&= -u\partial_xS_l+\frac{1}{4}\partial_x(C_{l-1}-C_{l+1})-\partial_z\left(\frac{1}{4}S_{l-1}+\left(w-\frac{3}{2}\right)C_l+\frac{1}{4}S_{l+1}\right) \\
&-\frac{l}{2}(\partial_z w- \partial_x
u)S_{l-1}+\frac{l}{2}(\partial_z w- \partial_x u)S_{l+1}\\
&+\frac{l}{2}(\partial_z u+ \partial_x w)C_{l-1}-l(\partial_z
u-\partial_x w)C_l+\frac{l}{2}(\partial_z u+\partial_x w)C_{l+1}\\
&-4l^2D_rS_l, \quad l=1,...,N. 
\end{aligned}
\label{eqn:moments2D}
\end{equation*}
Again, the system is closed with $C_{N+1}=S_{N+1}=0$. The moment equations can be rewritten in the form 
\begin{equation}
\partial_tQ(x,z,t)+A\partial_xQ+B\partial_zQ=\phi(Q). 
\label{eqn:system2d}
\end{equation}
The coefficient matrix $A\in \mathbb{R}^{(2N+1)\times (2N+1)}$ has the entries
\begin{equation*}
\begin{aligned}
 a_{1,3}&:=-1, \\
  a_{3,1}&:=-\frac{1}{8}, \\
a_{i,i}&:=u,&  i=1,..,2N+1, \\
\left(\begin{matrix}
a_{2(N-j)-2,2(N-j)-2} & \cdots &  a_{2(N-j)-2,2(N-j)+1}\\
\vdots & & \vdots \\
a_{2(N-j)+1,2(N-j)-2} & \cdots &  a_{2(N-j)+1,2(N-j)+1}
\end{matrix}\right)&:=&\left(\begin{matrix}
0 & 0 & 0 & -\frac{1}{4}\\
0 & 0 & \frac{1}{4} & 0\\
0 & \frac{1}{4} & 0 & 0 \\
-\frac{1}{4} & 0 & 0 & 0 
\end{matrix}\right), &j=0,...,N-2. \\
\end{aligned}
\label{eqn:A2d}
\end{equation*}
All other components of $A$ are equal to zero. The coefficient matrix $B\in \mathbb{R}^{(2N+1)\times (2N+1)}$ has the form
\begin{equation*}
\begin{aligned}
b_{1,2}&:= 1, \quad b_{2,1}:=\frac{1}{8},& \\
b_{j,j+2}&:= \frac{1}{4}, \quad b_{j+2,j}:=\frac{1}{4},& \quad j=2,...,2N-1, \\
b_{j,j}&:= w-\frac{3}{2}, &\quad j=1,...,2N+1.
\end{aligned}
\label{eqn:B2d}
\end{equation*}
All other components of $B$ are equal to zero. The two-dimensional moment system (\ref{eqn:system2d}) has to be considered with the flow equation 
\begin{equation}
\begin{aligned}
Re\left(\partial_tu+u\partial_xu+w\partial_zu\right)+\partial_x p&=u_{xx}+u_{zz},\\
Re\left(\partial_tw+u\partial_xw+w\partial_zw\right)+\partial_z p&=w_{xx}+w_{zz}-\delta\displaystyle\int_0^{2\pi}f d\theta,\\
\partial_x u+\partial_z w&=0. 
\end{aligned}
\label{eqn:Navier}
\end{equation}
In \cite{Dahm}, we showed that also the moment system (\ref{eqn:system2d}) is hyperbolic. 

\section{Estimating modelling errors}
\label{sec:estimate}
Our goal is to control the difference between the solution to the $(2N+1)$-moments system and the solution to the $(2N+3)$ moments system based on information that can be computed from the solution of the $(2N+1)$-moments system.
We will prove such an estimate in the case of one space dimension and  periodic boundary conditions.
The rationale is that we plan to solve the $(2N+1)$-moments system numerically and would like to assert whether its solution also provides a good approximation of the $(2N+3)$ moments system. In particular, we avoid dependence of constants in our estimate on the $(2N+3)$-moments solution.
We denote the flat, $1$-dimensional torus by $\mathbb{T}$.

For any $N \in \mathbb{N}$ the $(2N+1)$-moments system is endowed with an
energy, energy flux pair. Indeed, let\linebreak
$Q=(\rho, C_1, S_1, .., C_N, S_N)$  we define entropy $\eta$ and entropy flux $q$ by
\begin{gather} \eta(Q) := \frac12 C_0^2 + \sum_{i=1}^N C_i^2 + S_i^2  ,\\
   q(Q) :=  \frac14 S_1 C_0 + \frac14 \sum_{i=2}^N (C_i S_{i-1} + C_{i-1} S_i)
\end{gather}

\begin{lemma}
Any solution $(Q,w)$ of the $2N+1$-moments system  with
\[Q \in L^\infty(0,T;L^2(\mathbb{T})), w \in L^\infty(0,T;H^1(\mathbb{T})) \cap L^2(0,T; H^2(\mathbb{T})) \]
  satisfies:
\begin{multline*} \partial_t \big(\eta(Q)+ \frac{Re}2 (\partial_x w)^2\big) + \partial_x \big( q(Q)-\partial_x w \partial_{xx}w\big) 
 =\\
 \partial_x w \frac12 \Big(\sum_{\ell=1}^{N-1}C_\ell S_{\ell+1} - S_\ell C_{\ell+1}  
  \Big)
 - 4 D_r \sum_{\ell=1}^N \ell^2 (C_\ell^2 + S_\ell^2) - ( \partial_{xx} w)^2 - \delta \partial_x w \partial_x \rho
\end{multline*}
in the sense of distributions.
\end{lemma}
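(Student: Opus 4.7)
The plan is the standard entropy-identity computation for the hyperbolic moment system, combined with an energy identity for the scalar diffusion equation for $w$. Under the assumed regularity $Q\in L^\infty(0,T;L^2(\mathbb{T}))$ and $w\in L^\infty(0,T;H^1(\mathbb{T}))\cap L^2(0,T;H^2(\mathbb{T}))$, every product appearing in the target identity is in $L^1_{\mathrm{loc}}((0,T)\times\mathbb{T})$, so the calculation can be carried out formally (by testing the equations against smooth compactly supported functions) and interpreted in $\mathcal{D}'$ at the end.

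For the moment system I multiply the equation for $\rho$ by $\tfrac12 C_0$, each equation for $C_\ell$ ($1\le\ell\le N$) by $2C_\ell$, and each equation for $S_\ell$ by $2S_\ell$, then sum. By the chain rule the left-hand side is $\partial_t\eta(Q)$. The right-hand side produced by this operation splits into three groups. The purely hyperbolic flux terms $\tfrac14\partial_x(S_{\ell+1}-S_{\ell-1})$ and $\tfrac14\partial_x(C_{\ell-1}-C_{\ell+1})$ reassemble, using the product rule in reverse and a one-step reindexing of the neighbour sums, into a single exact $x$-derivative which by construction equals $-\partial_x q(Q)$. Two boundary inputs drive this identification: $S_0=0$ together with $C_0=\rho/2$ at the low-index end, and the closure $C_{N+1}=S_{N+1}=0$ at the high-index end, the latter being what kills the $\ell=N$ boundary piece that would otherwise spoil the telescoping. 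The rotational-diffusion contributions $-4\ell^2 D_r C_\ell^2$ and $-4\ell^2 D_r S_\ell^2$ survive unchanged and supply the dissipative sum on the right of the identity.

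The shear-coupling contributions, which are proportional to $\partial_x w$, require the most care and constitute the main technical step. The diagonal pieces $\pm 2\ell\,\partial_x w\,C_\ell S_\ell$ cancel between the $C_\ell$ and $S_\ell$ equations, leaving only nearest-neighbour cross terms of the form $\ell\,\partial_x w\,(S_\ell C_{\ell\pm1}-C_\ell S_{\ell\pm1})$. A single reindexing $\ell\mapsto\ell+1$ in the $\ell-1$ part pairs adjacent summands; using $S_0=0$ at the bottom and $C_{N+1}=S_{N+1}=0$ at the top removes the two boundary contributions, and the surviving combination is exactly the antisymmetric quantity $\tfrac12\partial_x w\sum_{\ell=1}^{N-1}(C_\ell S_{\ell+1}-S_\ell C_{\ell+1})$ stated on the right-hand side of the lemma. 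Keeping the bookkeeping of the boundary indices straight, and verifying that no extra $\partial_x w$ piece survives after cancellation, is the step most likely to trip one up.

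For the flow equation I differentiate $Re\,\partial_t w=\partial_{xx}w+\delta(\bar\rho-\rho)$ in $x$ (legitimate distributionally by the assumed $H^2$-regularity of $w$), multiply by $\partial_x w$, and use $\partial_x w\,\partial_{xxx}w=\partial_x(\partial_x w\,\partial_{xx}w)-(\partial_{xx}w)^2$ to obtain
\[
\partial_t\!\left(\tfrac{Re}{2}(\partial_x w)^2\right)-\partial_x\bigl(\partial_x w\,\partial_{xx}w\bigr)=-(\partial_{xx}w)^2-\delta\,\partial_x w\,\partial_x\rho .
\]
Adding this identity to the entropy identity for the moment system assembled in the previous paragraphs, and collecting the fluxes as $\partial_x(q(Q)-\partial_x w\,\partial_{xx}w)$, produces exactly the claimed equality in $\mathcal{D}'((0,T)\times\mathbb{T})$. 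The hypothesis $w\in L^2(0,T;H^2)$ is precisely what is needed so that $(\partial_{xx}w)^2$ is integrable on compact subsets.
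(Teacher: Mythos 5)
Your overall strategy --- multiply the moment equations by the components of $\eta'(Q)$, telescope the flux and coupling terms, and add the $x$-differentiated energy identity for the diffusion equation --- is exactly the computation the paper leaves implicit (its entire proof is the remark that the identity can be seen by smoothing $Q$ in space and time), and your treatment of the $w$-equation is correct. The gap is that the step you yourself flag as ``most likely to trip one up'' is precisely the one you assert rather than carry out, and carrying it out does not produce the right-hand side you claim. Writing $T_\ell:=C_\ell S_{\ell+1}-S_\ell C_{\ell+1}$, the $\partial_x w$-coupling terms give $\partial_x w\sum_{\ell=1}^{N}\ell\,(T_{\ell-1}-T_\ell)$, which by Abel summation (using $T_N=0$) equals $\partial_x w\sum_{\ell=0}^{N-1}T_\ell$: the weights $\ell$ do cancel, but the result carries no factor $\tfrac12$ and contains the $\ell=0$ term $\partial_x w\, C_0S_1$, which is absent from the statement. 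The case $N=1$ makes this concrete: there the stated sum is empty, yet $\tfrac12 C_0\partial_t\rho+2C_1\partial_t C_1+2S_1\partial_t S_1$ leaves the nonzero term $\partial_x w\,C_0S_1$. Likewise the dissipation comes out as $-8D_r\sum_\ell \ell^2(C_\ell^2+S_\ell^2)$ rather than $-4D_r(\cdots)$, and the flux terms assemble to $\tfrac12\partial_x\bigl(\sum_{\ell=0}^{N-1}T_\ell\bigr)$, which is not $-\partial_x q$ for the $q$ defined in the paper. So either the statement's normalisations contain typos that your proof silently inherits, or your claimed cancellation is wrong; in either case the proof as written does not establish the identity, because the decisive verification is skipped at exactly the point where the constants and index range must be checked.

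A secondary but genuine issue is the distributional justification. With $Q$ only in $L^\infty(0,T;L^2(\mathbb{T}))$ you cannot simply ``test the equations against smooth compactly supported functions'' and then multiply by $\eta'(Q)$: $\eta'(Q)$ is not an admissible test function, and the individual products such as $C_\ell\,\partial_x S_{\ell+1}$ are not defined for merely $L^2$ fields --- only the combinations $\partial_x(C_\ell S_{\ell+1})$ are. This is exactly why the paper's one-line proof appeals to mollifying $Q$ in space and time; a complete argument must perform the algebra for the mollified quantities and show that the resulting commutators vanish in the limit. Your remark that all products in the final identity lie in $L^1_{\mathrm{loc}}$ addresses the meaning of the conclusion but not the legitimacy of the derivation.
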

This can be seen by smoothing the solution $Q$ in space and time.

In order, to bound the difference between the solutions to the
$(2N+3)$ and $(2N+1)$
moments systems, we will need the following 
 generalised Gronwall lemma:
\begin{proposition} \cite[Prop 6.2, Generalised Gronwall lemma]{Bartels} \label{prop:GeneralizedGronwall}
    Suppose that the nonnegative functions $y_1 \in C([0, T])$, $y_2, y_3 \in L^1([0, T]), a \in L^{\infty}([0, T])$, and the real number $A \geq 0$ satisfy
    \begin{equation*}
    y_1\left(T^{\prime}\right)+\int_0^{T^{\prime}} y_2(t) \mathrm{d} t \leq A+\int_0^{T^{\prime}} a(t) y_1(t) \mathrm{d} t+\int_0^{T^{\prime}} y_3(t) \mathrm{d} t
    \end{equation*}
    for all $T^{\prime} \in[0, T]$. Assume that for $B \geq 0, \beta>0$, and every $T^{\prime} \in[0, T]$, we have
    \begin{equation*}
    \int_0^{T^{\prime}} y_3(t) \mathrm{d} t \leq B\left(\sup _{t \in\left[0, T^{\prime}\right]} y_1^\beta(t)\right) \int_0^{T^{\prime}}\left(y_1(t)+y_2(t)\right) \mathrm{d} t .
    \end{equation*}
    Set $E=\exp \left(\int_0^T a(t) \mathrm{d} t\right)$. Provided $8 A E \leq(8 B(1+T) E)^{-1 / \beta}$ holds, then
    \begin{equation*}
        \sup _{t \in[0, T]} y_1(t)+\int_0^T y_2(t) \mathrm{d} t \leq 8 A E.
    \end{equation*}
\end{proposition}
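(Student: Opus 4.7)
The plan is a continuity (bootstrap) argument that targets the threshold $8AE$ directly. I introduce the nondecreasing quantity
\[ Z(T') := \sup_{s \in [0,T']} y_1(s) + \int_0^{T'} y_2(t)\,dt, \]
which is continuous in $T'$ because $y_1$ is continuous and $y_2 \in L^1([0,T])$. The initial value $Z(0)=y_1(0) \le A$ follows from the integral hypothesis at $T'=0$, and since $a \ge 0$ implies $E \ge 1$, one has $Z(0) \le 8AE$. I then set $T^* := \sup\{T' \in [0,T] : Z(T') \le 8AE\}$. By continuity $T^* > 0$, and the goal is to show that on $[0,T^*]$ the bound can be improved to $Z(T') \le \tfrac{8}{3} AE$; this strict improvement will propagate past $T^*$ unless $T^* = T$.

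The first step linearises away the $\int a\, y_1$ term. Set $\phi(s) := y_1(s) + \int_0^s y_2(t)\,dt$, which is nonnegative and continuous. The hypothesis applied at time $s$, together with $y_1 \le \phi$, gives
\[ \phi(s) \le \Bigl(A + \int_0^s y_3(t)\,dt\Bigr) + \int_0^s a(t)\, \phi(t)\,dt. \]
Classical Gronwall, with the nondecreasing inhomogeneity $A+\int_0^s y_3$, yields $\phi(s) \le E\bigl(A+\int_0^s y_3(t)\,dt\bigr)$. This bounds $\sup_{[0,T']} y_1$ through $y_1 \le \phi$ and simultaneously bounds $\int_0^{T'} y_2 \le \phi(T')$; adding the two produces the preliminary estimate $Z(T') \le 2E\bigl(A+\int_0^{T'} y_3(t)\,dt\bigr)$.

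The second step feeds in the nonlinear hypothesis on $\int y_3$. Since $\sup_{[0,T']} y_1 \le Z(T')$ and $\int_0^{T'}(y_1+y_2)\,dt \le (1+T)\,Z(T')$, one has $\int_0^{T'} y_3\,dt \le B(1+T)\,Z(T')^{1+\beta}$. Inserted into the preliminary estimate this gives
\[ Z(T') \le 2EA + 2EB(1+T)\, Z(T')^{\beta}\, Z(T'). \]
On $[0,T^*]$ the factor $Z(T')^\beta \le (8AE)^\beta$, so the smallness hypothesis $(8AE)^\beta \cdot 8B(1+T)E \le 1$ forces $2EB(1+T)\, Z(T')^\beta \le \tfrac14$. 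Hence $\tfrac34 Z(T') \le 2AE$, i.e.\ $Z(T') \le \tfrac{8}{3} AE < 8AE$ on $[0,T^*]$. By continuity of $Z$ the strict inequality persists on a slightly larger interval, which is only possible if $T^* = T$; this yields the asserted bound $Z(T) \le 8AE$.

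The main obstacle is that one cannot apply Gronwall directly to the original inequality because $\int y_3$ is genuinely nonlinear in $y_1$, and a naive attempt produces a prefactor depending on the unknown $\sup y_1$. The decoupling device is essential: first absorb $\int a\, y_1$ into the linear Gronwall factor $E$, and only afterwards invoke the nonlinear bound on $\int y_3$ so that it appears as a single term $B(1+T)Z^{1+\beta}$. The precise constants $8AE$ and $\tfrac83 AE$ are calibrated so that the smallness assumption $8AE \le (8B(1+T)E)^{-1/\beta}$ delivers a fixed-ratio strict improvement, which is exactly what makes the continuity bootstrap close on the full interval $[0,T]$.
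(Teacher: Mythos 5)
Note first that the paper does not prove this proposition at all: it is imported verbatim from Bartels \cite{Bartels} (Prop.\ 6.2), so there is no internal proof to compare against; your argument can only be judged on its own merits and against the cited source. On that basis your proof is correct and is essentially the standard (indeed Bartels-style) argument: a linear Gronwall step with the nondecreasing inhomogeneity $A+\int_0^s y_3$ to absorb $\int a\,y_1$ into the factor $E$, followed by the nonlinear absorption $\int_0^{T'}y_3 \le B(1+T)Z(T')^{1+\beta}$, and a continuity bootstrap in which the smallness condition $(8AE)^{\beta}\cdot 8B(1+T)E\le 1$ converts the a priori bound $Z\le 8AE$ into the strictly better bound $Z\le \tfrac83 AE$, closing the continuation; your computation even shows the stated constant $8AE$ is not sharp. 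Two degenerate cases deserve one sentence each, since your bootstrap as written does not cover them: if $B=0$ the quantity $(8B(1+T)E)^{-1/\beta}$ must be read as $+\infty$ and the linear Gronwall step alone gives $Z\le 2AE$, with no division needed; and if $A=0$ the strict improvement $\tfrac83 AE<8AE$ collapses (both sides are $0$), and the claim ``by continuity $T^*>0$'' is unjustified --- there you should instead argue from $Z(T')\le 2EB(1+T)Z(T')^{1+\beta}$ that $Z$ can take no value in $\bigl(0,(2EB(1+T))^{-1/\beta}\bigr)$, so continuity and $Z(0)=0$ force $Z\equiv 0$. With those trivial repairs the proof is complete and matches the canonical route to this lemma.
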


Note that if $\hat \rho, \hat C_1, \hat S_1, .., \hat C_N, \hat S_N$
solve the $(2N+1)$-moments
system and we insert \\$\hat Q:= (\hat \rho, \hat C_1, \hat S_1, ..,
\hat C_N, \hat S_N, 0, 0 )^T$
into the $(2N+3)$-moments system then the evolution equation for $w$ and all but the last two evolution equations for the $C_i, S_i$ are satisfied.
In the evolution equation for $C_{N+1}$ we have (due to $\hat C_{N+1}=\hat S_{N+1}=0$)
\[\partial_t 0 + \frac14 \partial_x\hat  S_N + \frac{N+1}{2} \partial_x \hat w (\hat S_N + 2\cdot 0) + 4(N+1)^2 D_r 0     \]
which is, in general, not zero.
Thus, we can understand $\hat Q$ as the solution of a perturbed $(2N+3)$-moments system with perturbations
\begin{eqnarray}
-\frac14 \partial_x \hat S_N - \frac{N+1}{2}   \hat S_N \partial_x \hat w&=:& \hat R_{2N+2},
\\
+\frac14 \partial_x \hat C_N + \frac{N+1}{2}   \hat C_N\partial_x \hat w &=:& \hat R_{2N+3},
\end{eqnarray}
in the evolution equation for $C_{N+1}, \, S_{N+1}$ respectively.

If we define the vector $\hat R:=(0,...,0,\hat R_{2N+2},\hat R_{2N+3}  )^T$, then the homogeneously extended $(2N+1)$-moments solution $\hat Q$ satisfies the perturbed $(2N+3)$-moments system:
\begin{eqnarray}\label{eq:perturbedm}
 \partial_t \hat Q + A\partial_x \hat Q &=& \hat \phi(\hat Q)  + \hat R\\
 \label{eq:perturbedv}
 \operatorname{Re} \partial_t \hat w &=& \partial_{xx} \hat w + \delta (\bar \rho - \hat \rho),
\end{eqnarray}
where we write $\hat \phi$ to emphasise the dependence on $\hat w$.

Now, we plan to show that the difference between $(\hat Q, \hat w)$
and the exact solution of the
$(2N+3)$ moments system $(Q, w)$ can be bounded in terms of $\hat R$ and norms of $(\hat Q, \hat w)$.  It turns out that this can only be done rigorously if $\hat R$ is small enough and certain norms of $\hat Q$ and $\partial_x \hat {w}$ are not too large. This is a reflection of the fact that the equations allow for the development of clusters that are associated with instabilities.

In the following, let $\eta$ be the entropy of the $(2N+3)$-moments
system  and note that $\eta'(Q)$ is given by\linebreak
$(C_0, 2C_1, 2S_1, \dots , 2C_{N+1}, 2S_{N+1}).$

\begin{theorem}\label{theorem:errorEstimate}
 Let $(\hat Q, \hat w)$ be a solution to \eqref{eq:perturbedm},
 \eqref{eq:perturbedv} with $\hat R \in L^2((0,T) \times \mathbb{T})$
 and let $(Q,w)$ solve
 \eqref{eqn:inhomogen}, \eqref{eqn:diffusion}.
 Assume that
 \begin{eqnarray*}
  Q &\in& L^\infty(0,T; L^2(\mathbb{T}))\\
  \hat Q &\in& L^\infty((0,T) \times \mathbb{T})\\
  w&\in& L^\infty(0,T;H^1(\mathbb{T})) \cap L^2(0,T; H^2(\mathbb{T}))\\
  \hat w &\in& L^\infty(0,T;W^{1,\infty}(\mathbb{T})) \cap L^2(0,T; H^2(\mathbb{T}))\\
 \end{eqnarray*}
Set 
\[ y_1 :=\int_{\mathbb{T}} \frac12 (C_0 - \hat C_0)^2 + \sum_{i=1}^{N+1}( (S_i - \hat S_i)^2+ (C_i -\hat C_i)^2 )  + \frac{Re}{2} (\partial_{x}w -\partial_{x} \hat w)^2 dx\]
then, provided 
\begin{multline}\label{eqn:inequTheorem}
 16^3 \left(y_1(0) + \int_0^{T} \| \hat R \|_{L^2(\mathbb{T})}^2 dt \right)\left( (1+T) \frac{1}{2\sqrt{D_r}} \right)^{2}\\
\leq  \exp\left(-3 \left(\int_0^{T} (\|\partial_x \hat w \|_{\infty}  +2\delta^2 + 1  2 \max_{ 1 \leq \ell \leq N} (\ell+1) ( \|\hat S_\ell\|_\infty + \| \hat C_\ell \|_\infty))   dt  \right)\right)
\end{multline}
is satisfied, the following estimate holds
\begin{multline}
\sup_{1 \leq t \leq T } y_1(t) + \int_0^T\frac12 \|\partial_{xx} w - \partial_{xx} \hat w\|_{L^2(\mathbb{T})}^2 
+  D_r \sum_{\ell=1}^{N+1} \ell^2 (\|S_\ell - \hat S_\ell\|_{L^2(\mathbb{T})}^2 + \|C_\ell - \hat C_\ell\|_{L^2(\mathbb{T})}^2) dt\\
\leq
8 \left(y_1(0) + \int_0^{T} \| \hat R \|_{L^2(\mathbb{T})}^2 dt \right)\\
 \times \exp\left(3 \left(\int_0^{T} \| \partial_x \hat w \|_\infty +2\delta^2 + 1  2 \max_{ 1 \leq \ell \leq N} (\ell+1) ( \|\hat S_\ell\|_\infty + \| \hat C_\ell \|_\infty)   dt  \right)\right)
\end{multline}
\end{theorem}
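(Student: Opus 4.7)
The plan is to set up an energy/entropy inequality for the difference $(\tilde Q, \tilde w) := (Q - \hat Q,\,w - \hat w)$, recast it in the form required by Proposition~\ref{prop:GeneralizedGronwall}, and then invoke that proposition. Subtracting \eqref{eq:perturbedm}--\eqref{eq:perturbedv} from the true $(2N+3)$-moments system \eqref{eqn:inhomogen}--\eqref{eqn:diffusion} gives
\[
\partial_t \tilde Q + A\,\partial_x \tilde Q = \phi(Q,w) - \hat\phi(\hat Q,\hat w) - \hat R, \qquad Re\,\partial_t \tilde w - \partial_{xx}\tilde w = -\delta\,\tilde\rho.
\]
First I would test the $\tilde Q$-equation against $\eta'(\tilde Q)=(\tilde C_0,\,2\tilde C_1,\,2\tilde S_1,\dots,2\tilde C_{N+1},\,2\tilde S_{N+1})$ and the $\tilde w$-equation against $-\partial_{xx}\tilde w$, integrate over $\mathbb{T}$, and use that the convective contribution $\int_{\mathbb{T}}\eta'(\tilde Q)\cdot A\,\partial_x \tilde Q\,dx$ is the exact $x$-derivative of $q(\tilde Q)$ and thus vanishes by periodicity, exactly as in the preceding lemma.

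\textbf{Treatment of the source and assembly.} The source difference $\phi(Q,w)-\hat\phi(\hat Q,\hat w)$ is analysed term by term via the bilinear identity $ab - \hat a\,\hat b = \hat a\,\tilde b + \tilde a\,\hat b + \tilde a\,\tilde b$ applied to each product $\partial_x w\,(S_{\ell-1}+2S_\ell+S_{\ell+1})$ and its cosine analogue. This produces four groups: (i) the rotational dissipation of order $D_r\sum \ell^2(\tilde C_\ell^2+\tilde S_\ell^2)$, retained on the left; (ii) contributions of the form $\partial_x \hat w\,\tilde S\,\tilde C$ bounded pointwise by $\|\partial_x \hat w\|_\infty\,y_1$; (iii) mixed terms such as $\partial_x \tilde w\,\hat S_\ell\,\tilde C_\ell$, handled via Cauchy--Schwarz and Young so as to contribute at most a multiple of $\max_\ell(\ell+1)(\|\hat S_\ell\|_\infty+\|\hat C_\ell\|_\infty)\,y_1$ plus a fraction of $\|\partial_x\tilde w\|_{L^2}^2$ absorbable into $y_1$; and (iv) a genuinely cubic residual $\int_{\mathbb{T}}\partial_x \tilde w\,\tilde C\,\tilde S\,dx$. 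The perturbation term $\int_{\mathbb{T}}\eta'(\tilde Q)\cdot\hat R\,dx$ is Young-bounded by $\tfrac12\|\hat R\|_{L^2}^2$ plus a fraction of the moment dissipation. In parallel the $\tilde w$ identity reads
\[
\tfrac{Re}{2}\tfrac{d}{dt}\|\partial_x \tilde w\|_{L^2}^2 + \tfrac12\|\partial_{xx}\tilde w\|_{L^2}^2 \leq \tfrac{\delta^2}{2}\|\tilde\rho\|_{L^2}^2,
\]
whose right hand side feeds the $\delta^2 y_1$ contribution into $a(t)$. Summing everything and tracking the absolute constants yields an inequality of precisely the shape required by Proposition~\ref{prop:GeneralizedGronwall},
\[
y_1(T') + \int_0^{T'} y_2(t)\,dt \leq A + \int_0^{T'} a(t)\,y_1(t)\,dt + \int_0^{T'} y_3(t)\,dt,
\]
with $A = y_1(0)+\int_0^{T'}\|\hat R\|_{L^2}^2\,dt$, $y_2(t) = \tfrac12\|\partial_{xx}\tilde w\|_{L^2}^2 + D_r\sum \ell^2(\|\tilde C_\ell\|_{L^2}^2+\|\tilde S_\ell\|_{L^2}^2)$, $a(t)$ the integrand in the exponent of \eqref{eqn:inequTheorem}, and $y_3(t)$ the contribution from (iv).

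\textbf{Main obstacle and closure.} The only delicate step is to bring the cubic residual $y_3$ into the form $\int_0^{T'}y_3\,dt \leq B(\sup_t y_1(t))^{\beta}\int_0^{T'}(y_1+y_2)\,dt$ required by the generalised Gronwall hypothesis; it is from this estimate that the unavoidable smallness condition on the data in \eqref{eqn:inequTheorem} originates, reflecting the possibility of cluster formation. I would bound $y_3(t) \leq \|\partial_x \tilde w\|_\infty\,y_1(t)$, then apply the one-dimensional Gagliardo--Nirenberg interpolation $\|\partial_x \tilde w\|_\infty \lesssim \|\partial_{xx}\tilde w\|_{L^2}^{1/2}\|\partial_x \tilde w\|_{L^2}^{1/2}$ together with the estimate $\|\partial_x \tilde w\|_{L^2}^2 \leq 2\,y_1/Re$ and Young's inequality. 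Paying a small fraction of the dissipation $D_r\sum \ell^2(\|\tilde C_\ell\|_{L^2}^2+\|\tilde S_\ell\|_{L^2}^2)$ from $y_2$ to absorb the $\|\partial_{xx}\tilde w\|_{L^2}$ factor is exactly what produces the $1/(2\sqrt{D_r})$ dependence of the constant $B$. Inserting the resulting $A$, $B$ and $\beta$ into the smallness condition $8AE \leq (8B(1+T)E)^{-1/\beta}$ of Proposition~\ref{prop:GeneralizedGronwall} reproduces \eqref{eqn:inequTheorem}, and the proposition's conclusion $\sup_t y_1 + \int_0^T y_2\,dt \leq 8AE$ with $E=\exp(\int_0^T a\,dt)$ is the claimed bound.
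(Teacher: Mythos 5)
Your proposal follows the paper's proof essentially step for step: the same relative-entropy functional for the difference (equivalent to testing against $\eta'(Q-\hat Q)$, since $\eta$ is quadratic), the same trilinear splitting of the source difference into a $\partial_x\hat w$-part, a mixed part, and a genuinely cubic part, the same Young-type absorption of the $\delta$- and $\hat R$-terms, and the same invocation of Proposition~\ref{prop:GeneralizedGronwall} with $\beta=\tfrac12$, $A=y_1(0)+\int_0^T\|\hat R\|_{L^2}^2\,dt$ and $B\sim D_r^{-1/2}$, which is exactly where the smallness condition \eqref{eqn:inequTheorem} comes from. The only technical deviation is cosmetic: you control $\|\partial_x(w-\hat w)\|_\infty$ by Gagliardo--Nirenberg interpolation, while the paper uses the plain embedding $H^1\hookrightarrow L^\infty$; both yield the exponent $\beta=\tfrac12$.

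One point is glossed over and deserves to be made explicit, because the stated constants depend on it. The source terms carry weights $\tfrac{\ell}{2}(\cdot_{\ell-1}+2\cdot_\ell+\cdot_{\ell+1})$ with $\ell$ up to $N+1$, so the pointwise bounds you assert for groups (ii) and (iv) do not follow by brute force: a naive estimate produces an extra factor $\max_\ell \ell$ and hence $N$-dependent constants in $a(t)$ and $B$. For the $\partial_x\hat w$-terms the paper exploits the exact antisymmetric cancellation between the $C_\ell$- and $S_\ell$-equations (the diagonal terms cancel and the off-diagonal ones telescope to $\partial_x\hat w\sum_\ell(\tilde S_\ell\tilde C_{\ell+1}-\tilde C_\ell\tilde S_{\ell+1})$ with $\ell$-independent coefficients), which is what gives the coefficient $1$ on $\|\partial_x\hat w\|_\infty$ in the exponent; for the cubic term one factor of $\ell$ must be fed into the $D_r\sum_\ell \ell^2$-weighted dissipation in $y_2$ via Cauchy--Schwarz (not, as you write, used to absorb the $\|\partial_{xx}(w-\hat w)\|_{L^2}$ factor, which is absorbed into the other half of $y_2$). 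With these two points supplied, your argument closes and coincides with the paper's.
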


\begin{proof}
We observe that 
\[\eta(Q) - \eta(\hat Q) - \eta'(\hat Q)(Q- \hat Q) = \frac12 (C_0 - \hat C_0)^2 + \sum_{i=1}^{N+1}( (S_i - \hat S_i)^2+ (C_i -\hat C_i)^2 ) \]
and $\eta'(Q)-\eta'(\hat Q)= \eta'(Q-\hat Q)$.
Thus, we have, in an almost everywhere sense,
\begin{eqnarray*}
&&\frac{d}{dt}  \int_{\mathbb{T}} \frac12 (C_0 - \hat C_0)^2 + \sum_{i=1}^{N+1}( (S_i - \hat S_i)^2+ (C_i -\hat C_i)^2 )  + \frac{Re}{2} (\partial_x w -\partial_x \hat w)^2 dx \\
&=&\int_{\mathbb{T}}  (\eta'(Q) -\eta'(\hat Q))\cdot(\partial_t Q - \partial_t \hat Q)  + Re (\partial_x w -\partial_x \hat w) (\partial_{xt} w - \partial_{xt} \hat w) dx\\
&=& \int_{\mathbb{T}}  - (\eta'(Q) -\eta'(\hat Q))\cdot A(\partial_x Q - \partial_x \hat Q)  + (\eta'(Q) -\eta'(\hat Q))\cdot (\phi(Q)  - \hat \phi( \hat Q) ) \\
&& -  (\partial_{xx} w - \partial_{xx} \hat w)^2 + \delta (\partial_{xx} w- \partial_{xx} \hat w) ( \rho - \hat \rho) - (\eta'(Q) -\eta'(\hat Q)) \hat R dx\\
&=&\int_{\mathbb{T}} \sum_{\ell=1}^{N+1} \frac{-\ell}2 (C_\ell - \hat C_\ell) \left[  \partial_x w (S_{\ell-1} + 2 S_\ell + S_{\ell +1})
-\partial_x  \hat w (\hat S_{\ell-1} + 2 \hat S_\ell + \hat S_{\ell +1})\right]\\
&& + \sum_{\ell=1}^{N+1} \frac{\ell}2 (S_\ell - \hat S_\ell) \left[ \partial_x w (C_{\ell-1} + 2 C_\ell + C_{\ell +1})
- \partial_x \hat w (\hat C_{\ell-1} + 2 \hat C_\ell + \hat C_{\ell +1})\right]\\
&& -  (\partial_{xx} w - \partial_{xx} \hat w)^2 + \delta (\partial_{xx} w- \partial_{xx} \hat w) (\rho - \hat \rho) - (\eta'(Q) -\eta'(\hat Q)) \hat R \\
&& - D_r \sum_{\ell=1}^{N+1} \ell^2 ((S_\ell - \hat S_\ell)^2 + (C_\ell - \hat C_\ell)^2) dx \\
&=&\int_{\mathbb{T}} \partial_x\hat w  \sum_{\ell=1}^{N+1}  (S_\ell - \hat S_\ell)(C_{\ell+1} - \hat C_{\ell+1}   )- (C_\ell - \hat C_\ell)(S_{\ell+1} - \hat S_{\ell+1}   )\\
&& + \sum_{\ell=1}^{N+1} \frac{\ell}2 (S_\ell - \hat S_\ell)  (\partial_x w - \partial_x \hat w)  (C_{\ell-1} + 2 C_\ell + C_{\ell +1})
- (C_\ell - \hat C_\ell)  (\partial_x w -\partial_x \hat w)  (S_{\ell-1} + 2 S_\ell + S_{\ell +1})\\
&& -  (\partial_{xx} w - \partial_{xx} \hat w)^2 + \delta (\partial_{xx} w- \partial_{xx}\hat w) (\rho - \hat \rho)  - (\eta'(Q) -\eta'(\hat Q)) \hat R \\
&&- D_r \sum_{\ell=1}^{N+1} \ell^2 ((S_\ell - \hat S_\ell)^2 + (C_\ell - \hat C_\ell)^2) dx 
\end{eqnarray*}

Introducing the abbreviations
\begin{eqnarray*}
 y_1 &:=&\int_{\mathbb{T}} \frac12 (C_0 - \hat C_0)^2 + \sum_{i=1}^N( (S_i - \hat S_i)^2+ (C_i -\hat C_i)^2 )  + \frac{Re}{2} (\partial_{x}w -\partial_{x} \hat w)^2 dx,\\
 y_2 &:=& \int_{\mathbb{T}} (\partial_{xx} w - \partial_{xx} \hat w) ^2dx
 +  D_r \sum_{\ell=1}^{N+1} \ell^2 ((S_\ell - \hat S_\ell)^2 + (C_\ell - \hat C_\ell)^2) dx,\\
 y_3 &:=&\int_{\mathbb{T}}\partial_x \hat w  \sum_{\ell=1}^{N+1} \left(  (S_\ell - \hat S_\ell)
          (C_{\ell+1} - \hat C_{\ell+1}   ) -
 (C_\ell - \hat C_\ell)(S_{\ell+1} - \hat S_{\ell+1}   ) \right) dx,\\
 y_4 &:=&\int_{\mathbb{T}} \sum_{\ell=1}^{N+1} \Big( \frac{\ell}2 (S_\ell -
          \hat S_\ell)  (\partial_x w - \partial_x\hat w)  (\hat C_{\ell-1} + 2 \hat
          C_\ell + \hat C_{\ell +1}) \\
 && \qquad 
- (C_\ell - \hat C_\ell)  (\partial_x w - \partial_x\hat w)  (\hat S_{\ell-1} + 2 \hat
    S_\ell + \hat S_{\ell +1}) \Big) dx,
\end{eqnarray*}
\begin{eqnarray*}
y_5&:=&
 \int_{\mathbb{T}} \sum_{\ell=1}^{N+1} \Big( \frac{\ell}2 (S_\ell - \hat S_\ell)  (\partial_x w - \partial_x\hat w)  (C_{\ell-1} + 2 C_\ell + C_{\ell +1}
- \hat C_{\ell-1} - 2 \hat C_\ell - \hat C_{\ell +1})\\
&&\quad
- \frac{\ell}2(C_\ell - \hat C_\ell)  (\partial_x w -\partial_x \hat w)  (S_{\ell-1} +
   2 S_\ell + S_{\ell +1}- \hat S_{\ell-1} - 2 \hat S_\ell - \hat
   S_{\ell +1})\Big) dx, \\
y_6 &:=&  \int_{\mathbb{T}} \delta (\partial_{xx} w- \partial_{xx} \hat w) (\rho - \hat \rho) dx,\\
 y_7 &:=& - \int_{\mathbb{T}} (\eta'(Q) -\eta'(\hat Q)) \hat R dx.
\end{eqnarray*}

We can summarise our computation by
\begin{equation}\label{eq:sum}
 y_1' + y_2 \leq |y_3| + |y_4| + |y_5| + |y_6|+ |y_7|
\end{equation}
and,  in order to apply Proposition \ref{prop:GeneralizedGronwall}, we need to bound $|y_3|,\dots,|y_7|$ in terms of $y_1, y_2$ and norms of $(\hat Q, \hat w)$.

Young's inequality implies
\[ |y_3| \leq \|\partial_x \hat w \|_{\infty }  y_1,\]
and
\[|y_4| \leq \frac14 y_2 + 2 \max_{ 1 \leq \ell \leq N} (\ell+1) ( \|\hat S_\ell\|_\infty + \| \hat C_\ell \|_\infty) y_1. \]
We can use the embedding of $H^1(0,1)$ into $L^\infty(0,1)$ with Lipschitz constant $2$ to obtain
\begin{equation*}  |y_5|^2  \leq \|\partial_x (w - \hat w)\|_\infty^2 
   4\left( \sum_{\ell} \ell^2 (\|S_\ell - \hat S_\ell\|_{L^2}^2+\| C_\ell - \hat C_\ell\|_{L^2}^2) \right)
   \left( \sum_{\ell}  \|S_\ell - \hat S_\ell\|_{L^2}^2+\| C_\ell - \hat C_\ell\|_{L^2}^2 \right)
 \leq  \frac{1}{4 D_r } y_2^2 y_1.
\end{equation*}
Using Young's inequality twice more implies
\[|y_6| \leq \frac14 y_2 + 2\delta^2 y_1  \]
and
\[| y_7| \leq y_1 + \| \hat R \|_{L^2(\mathbb{T})}^2.  \]

Thus, inserting the inequalities that we just derived into \eqref{eq:sum} and integrating in time from $0$ to some $T'$ we obtain for any $0 \leq T' \leq T$

\begin{multline}
 y_1 (T') + \int_0^{T'}\frac12  y_2(t) dt \leq y_1(0) + \int_0^{T'} \| \hat R \|_{L^2(\mathbb{T})}^2 dt + \frac{1}{2\sqrt{D_r}} \sup_{ 0 \leq t \leq T'} \sqrt{y_1} \int_0^{T'} C_S   (y_1+y_2) dt \\
 + \int_0^{T'} (\|\partial_x   \hat w \|_{\infty}  +2\delta^2 + 1  2 \max_{ 1 \leq \ell \leq N} (\ell+1) ( \|\hat S_\ell\|_\infty + \| \hat C_\ell \|_\infty))  y_1  dt
\end{multline}
where we write $\| \cdot\|_\infty$ instead of $\| \cdot \|_{L^\infty(0,1)}$ for brevity.

Thus, by invoking \eqref{prop:GeneralizedGronwall}, we conclude that for any $T$ such that
\begin{multline}
 16^3 \left(y_1(0) + \int_0^{T} \| \hat R \|_{L^2(\mathbb{T})}^2 dt \right) ((1+T) \frac{1}{2\sqrt{D_r}} )^{2}\\
\leq  \exp\left(-3 \left(\int_0^{T} (\|\partial_x \hat w \|_{\infty}  +2\delta^2 + 1  2 \max_{ 1 \leq \ell \leq N} (\ell+1) ( \|\hat S_\ell\|_\infty + \| \hat C_\ell \|_\infty))   dt  \right)\right)
\end{multline}
the following bound for the difference between the solutions to both systems holds:

\begin{multline}
\sup_{0 \leq t \leq T } y_1(t) + \int_0^T \frac12 y_2(t) dt
\leq
8 \left(y_1(0) + \int_0^{T} \| \hat R \|_{L^2(\mathbb{T})}^2 dt \right)\\
 \times \exp\left(3 \left(\int_0^{T} (\| \partial_x \hat w \|_\infty +2\delta^2 + 1  2 \max_{ 1 \leq \ell \leq N} (\ell+1) ( \|\hat S_\ell\|_\infty + \| \hat C_\ell \|_\infty))   dt  \right)\right).
\end{multline}
\end{proof}

\section{Numerical Discretisation of the 1D and 2D Homogenous Moment System}
\label{sec:numerical}
In this section, we present a numerical discretisation of the
homogenous moment systems for shear flow (\ref{eqn:inhomogen}) and
two-dimensional flow
(\ref{eqn:system2d}) with $\phi(Q) = 0$.
The update described by the source term is in each grid cell
equivalent to a spectral method described in \cite{Helzel2}.
The source term will be added via a straight
forward splitting approach and will not be discussed further. 

As it could be shown in \cite{Dahm} that the one- and two-dimensional
moment systems are hyperbolic, they can be solved with the
high-resolution Wave Propagation Algorithm by LeVeque \cite{LeV}, a
finite volume method for hyperbolic problems.
We distinguish between a uniform approximation and a non-uniform
approximation,
which corresponds to a constant number of moment equations throughout
the domain or a varying number of moment equations. 
\subsection{Uniform Approximation}
For the uniform approximation, the number of moment equations is fixed
globally for the entire domain. Thus, we consider
\begin{equation}\label{eqn:homSystem}
\partial_t Q + A \partial_x Q = 0, \quad A \in
\mathbb{R}^{(2N+1)\times (2N+1)}
\end{equation}
where the matrix $A$ is diagonalisable with real eigenvalues. We use the
notation $A=R \Lambda R^{-1}$, where $\Lambda$ is the diagonal matrix
of eigenvalues $\lambda_1\le \ldots \le \lambda_{2N+1}$ of $A$ and $R$
is the matrix whose columns are the
corresponding linear independent eigenvectors $r_1,\ldots,r_{2N+1}$.
\subsubsection{Wave Propagation Algorithm for 1D Moment System}
\label{subsec:wave1d}
The spatial domain $\Omega:=[x_l,x_r]$ is discretised with an equidistant numerical grid $x_l=x_{\frac{1}{2}},\dots,x_{M+\frac{1}{2}}=x_r$ with grid cells
\begin{equation*}
C_i\coloneqq\left[x_{i-\frac{1}{2}},x_{i+\frac{1}{2}}\right], \quad i=1,\ldots,M,
\label{eqn:grid1d}
\end{equation*}
of length $\Delta x:=x_{i+\frac{1}{2}}-x_{i-\frac{1}{2}}$. For the discretisation of the time variable, we consider  $0=t^0<t^1<t^2<\dots$ and define the length of the time step as $\Delta t\coloneqq t^{n+1}-t^n$, $\forall n \in \mathbb{N}_0$. The discrete values of $Q(x,t)$ in (\ref{eqn:homSystem}) at time $t^n$ are stored at the midpoints of the grid cell, i.e.
\begin{equation}
\label{eqn:Q1d}
Q_i^n\approx\frac{1}{\Delta x}\int_{x_{i-\frac{1}{2}}}^{x_{i+\frac{1}{2}}}Q\left(x,t^n\right)dx\approx Q\left(x_i,t^n\right), \quad i=1,\ldots,M
\end{equation}
approximates the cell averages in cell $C_i$ at time $t^n$. For each
time step, the cell averages are updated with LeVeque's high-resolution Wave
Propagation Algorithm, which can be described in the general form 
\begin{equation}
\label{eqn:highres}
Q_{i}^{n+1}=Q_i-\frac{\Delta t}{\Delta x}\left(\mathcal{A}^+\Delta Q_{i-1/2}+\mathcal{A}^-\Delta Q_{i+1/2}\right)-\frac{\Delta t}{\Delta x}\left(\tilde{F}_{i+1/2}-\tilde{F}_{i-1/2}\right).
\end{equation}
The fluctuations $\mathcal{A}^\pm$ are defined as 
\begin{equation}
\label{eqn:fluc}
\mathcal{A}^\pm_{i+1/2}:=\sum_{p=1}^{2N+1}\left(\lambda^p\right)^{\pm}\mathcal{W}^p_{i+\frac{1}{2}},
\end{equation}
with waves $\mathcal{W}^p_{i-\frac{1}{2}} = \alpha_p r_p$.
The coefficients describe the eigenvector decomposition of the jump
in $Q$ at the grid cell interface, i.e.
$(\alpha_1,\ldots,\alpha_{2N+1})^T = R^{-1} (Q_i^n - Q_{i-1}^n)$.
The second-order correction terms $\tilde{F}$ are for all $i$ given as 
\begin{equation*}
\tilde{F}_{i-1/2}=\frac{1}{2}\displaystyle\sum_{p=1}^{2N+1}|\lambda^p|\left(1-\frac{\Delta t}{\Delta x}|\lambda^p|\right)\tilde{\mathcal{W}}_{i-1/2}^p.
\end{equation*}
The tilde indicates that
limited versions of the waves are
used to suppress unphysical oscillations near discontinuities or steep
gradients as described in detail in \cite{LeV}.

While the Wave Propagation Algorithm
  (\ref{eqn:highres}) is not written in flux difference form, one
  obtains a conservative method for the homogeneous hyperbolic system
  (\ref{eqn:homSystem}) if
  \begin{equation}\label{eqn:conservationCondition}
  A Q_i - A Q_{i-1} = {\cal A}^- \Delta Q_{i-\frac{1}{2}} + {\cal A}^+
  \Delta Q_{i-\frac{1}{2}}.
  \end{equation}
  Computing the waves by an eigenvector decomposition of
  $Q_i-Q_{i-1}$ using the eigenvectors of $A$ as outlined above
  provides a conservative method.

\subsubsection{Wave Propagation Algorithm for 2D Moment System}
\label{subsubsec:Wave2d}
For the approximation of the two-dimensional homogenous moment system
\begin{equation}\label{eqn:homSystem2d}
  \partial_t Q + A \partial_x Q + B \partial_y Q = 0,
\end{equation}
with $A, B \in \mathbb{R}^{(2N+1)\times(2N+1)}$ as described in
\autoref{subsec:HypMomentSystem2d},
we assume that the velocity field $(u(t,x,z),0,w(t,x,z))^T$
is constant in time over a time step and externally imposed.
The two-dimensional spatial domain $\Omega:=[x_l,x_r]\times [z_l,z_r] $ is discretised on an equidistant numerical grid with grid cells
\begin{equation*}
C_{i,j}\coloneqq\left[x_{i-\frac{1}{2}},x_{i+\frac{1}{2}}\right] \times\left[z_{j-\frac{1}{2}},z_{j+\frac{1}{2}}\right] , \quad i=1,\ldots,M, j=1,\ldots,N,
\label{eqn:gridcell}
\end{equation*}
of length $\Delta x:=x_{i+\frac{1}{2}}-x_{i-\frac{1}{2}}$ and $\Delta z:=z_{j+\frac{1}{2}}-z_{j-\frac{1}{2}}$. 
The average value of $Q(x,z,t)$ over the $(i,j)$-th grid cell at time $t^n$ 
\begin{equation}
Q_{i,j}^n\approx \frac{1}{\Delta x\Delta z}\int_{z_{j-1/2}}^{z_{j+1/2}}\int_{x_{i-1/2}}^{x_{i+1/2}}Q\left(x,z,t^n\right)dxdz=\frac{1}{\Delta x\Delta z}\int_{C_{i,j}}Q\left(x,z,t^n\right)dxdz,
\label{eqn:Q2d}
\end{equation}
is updated with a method of the form 
\begin{equation*}
\begin{aligned}
Q_{i,j}^{n+1}=Q_{i,j}^n&-\dfrac{\Delta t}{\Delta x}\left(\mathcal{A}^+\Delta Q_{i-1/2,j}+\mathcal{A}^-\Delta Q_{i+1/2,j}\right) -\dfrac{\Delta t}{\Delta z}\left(\mathcal{B}^+\Delta Q_{i,j-1/2}+\mathcal{B}^-\Delta Q_{i,j+1/2}\right) \\
&-\dfrac{\Delta t}{\Delta x}\left(\tilde{F}_{i+1/2,j}-\tilde{F}_{i-1/2,j}\right)-\dfrac{\Delta t}{\Delta z}\left(\tilde{G}_{i,j+1/2}-\tilde{G}_{i,j-1/2}\right), 
\end{aligned}
\end{equation*}
$\mathcal{A}^\pm$ and $\mathcal{B}^\pm$ are the fluctuations resulting
from solving Riemann problems in the $x$- and $z$-direction. The
fluxes $\tilde{F}$ and $\tilde{G}$ perform second order corrections.
The details can again be found in \cite{LeV}.
\subsection{Non-Uniform Approximation with Interface Coupling}
For the non-uniform approximation, the number of moment equations in the moment system is adjusted adaptively. Depending on locally varying flow structures or accuracy requirements, the domain of interest is divided into intervals in which moment systems with different numbers of moment equations are considered. This leads to cell interfaces at which moment systems with different numbers of moment equations have to be coupled. 
\subsubsection{Generalised Riemann Problems for Moment Systems with
  Different Resolution}
\label{subsec:GeneralisedRP}
At interfaces between two cells in which moment systems with different
resolutions are used, generalised Riemann problems of the following
form are considered
 \begin{equation}
\label{eqn:generaliesedRP}
\begin{array}{rlrrrrrll}
\partial_tQ^{2N+1}+A^{2N+1}\partial_xQ^{2N+1}&=&0, &&&& \partial_tQ^{2M+1}+A^{2M+1}\partial_xQ^{2M+1}&=&0, \\[8pt]
Q^{2N+1}(x,0)&=&Q^{2N+1}_L, &&&& Q^{2M+1}(x,0)&=&Q^{2M+1}_R, \\[8pt]
x<0, &&&&&& x>0,
\end{array}
\end{equation}
where $A^{2N+1}\in \mathbb{R}^{(2N+1)\times (2N+1)}$ and $A^{2M+1}\in
\mathbb{R}^{(2M+1)\times (2M+1)}$ correspond to the coefficient matrix
(\ref{eqn:coefficient matrix}) of the one dimensional moment system,
$M \neq N$ and $Q^{2N+1}\in \mathbb{R}^{2N+1}$, $Q^{2M+1}\in
\mathbb{R}^{2M+1}$.
Without loss of generality we assume $N \le M$.
The change of the number of moment equations
leads to a change of
the eigenvalues and eigenvectors of the matrix $A$ and thus influences
the waves.



We approximate a solution of (\ref{eqn:generaliesedRP}) by a
piecewise constant function of the form
\begin{equation}
\begin{aligned}
\label{eqn:generalsolution}
Q(x,t)&=\begin{cases}Q_L^{2N+1}, \hspace{3mm} x-\lambda_1^{2N+1}t<0,
  \\[8pt]
Q_L^{2N+1} + \sum_{p=1}^i {\cal W}^{p,2N+1}, \hspace{3mm}    x \in
(\lambda_i^{2N+1}t,\lambda_{i+1}^{2N+1}t), \hspace{3mm} i=1,\dots, N,
\\[8pt]
Q_R^{2M+1} + \sum_{p=2M+1}^k {\cal W}^{p,2M+1}, \hspace{3mm}    x \in
(\lambda_{k-1}^{2M+1}t,\lambda_{k}^{2M+1}t), \hspace{3mm} k=2M+1,\dots, M+2,
\\[8pt]
Q_R^{2M+1},\hspace{3mm} x -\lambda_{2M+1}^{2M+1}t>0.
\end{cases}
\end{aligned}
\end{equation}
Here $\lambda_i^{2N+1}$, $i=1, \ldots, N$ are the negative
eigenvalues of the coefficient matrix
$A^{2N+1}$ and $\lambda_j^{2M+1}$, $j=M+2, \ldots, 2M+1$  are the
positive eigenvalues of $A^{2M+1}$.
Note that  $\lambda_{N+1}^{2N+1}=\lambda_{M+1}^{2M+1}=0$. Thus, for $x<0$
the piecewise constant solution, with $2N+1$ components, is computed by adding the left moving
waves ${\cal W}^{p,2N+1}=\alpha_p r_p^{N+1}$, $p=1, \ldots, N$, with
$\alpha = (R^{2N+1})^{-1} (Q_R^{2N+1}-Q_L^{2N+1})$ and eigenvectors
of $A^{2N+1}$ which correspond to negative eigenvalues, to the left
initial state $Q_L^{2N+1}$.
Here $Q_R^{2N+1}$ consists of the first $2N+1$ components of $Q_R^{2M+1}$.
Analogously,
the piecewise constant solution for $x>0$ is computed by adding the
right moving waves  
${\cal W}^{p,2M+1} = \alpha_p r_p^{2M+1}$, $p=2M+1,\ldots, M+2$,
with 
$\alpha = (R^{2M+1})^{-1}(Q_R^{2M+1} - Q_L^{2M+1})$, 
to the initial state $Q_R^{2M+1}$. Now the left state $Q_L^{2M+1} \in
\mathbb{R}^{2M+1}$  is obtained from
$Q_L^{2N+1}$ by adding zeros at the components $2N+2, \ldots, 2M+1$.

In order to visualise the solution of the generalised Riemann problem
for the homogeneous system of moment equations, we start with
piecewise constant initial values which are obtained from 
steady state solutions of (\ref{eqn:driftDiffusion}) with constant
externally imposed velocity gradient $\partial_x w$ and initial values
$f(0,\theta) = 1/2\pi$ using the spectral method from \cite{Helzel2}.
The spectral method for (\ref{eqn:driftDiffusion}) is based on an
expansion of $f$ of the form (\ref{eqn:expansionF}) but with a finite
number of moments. Thus, the spectral method directly provides the
initial values for the moments.

\begin{example}
\label{exa:Interface1}
We consider the generalised Riemann problem (\ref{eqn:generaliesedRP})
for different values of $N$ and $M$. For $x<0$ we use $w_x/D_r=1$, for
$x>0$ we use $w_x/D_r=4$ in order to compute the initial values for
the moments using a spectral method for the computation of steady
states of (\ref{eqn:driftDiffusion}).

As a reference solution for the generalised Riemann problem we compute the solution of the detailed model 
\begin{equation}
\partial_tf(x,t,\theta)+\partial_x(-\cos\theta\sin\theta f)=0
\label{eqn:fvergleich}
\end{equation}
using the steady state solutions of (\ref{eqn:driftDiffusion})
as initial values in $f$.
We compute the numerical solution $f$ of (\ref{eqn:fvergleich}) at
time $t=5$ using the two-dimensional Wave Propagation Algorithm
adapted to this scalar transport equation. We then numerically
integrate this solution over $\theta$ to compute the reference
solution $\rho(x,t)$.
\end{example}
\begin{figure}[H]%
  \centering
    \scalebox{1.5}{
\begin{tikzpicture}[decoration=brace]
	\draw[] (0,0) -- (5,0) node[right] {};
	
        \draw[] (2,0.0) -- (0.8,2) node[below]{}; 
	\draw[] (2,-0.2) -- (2,2) node[below]{};
	\draw[] (2,0.0) -- (3.2,2) node[below]{};
	\draw[] (2,0.0) -- (4.4,2) node[below]{};
	
	\node at (2,-0.5) {$\scriptscriptstyle 0$};
	\node at (0.6,1) {$\scriptscriptstyle Q_L$};
	\node at (0.8,2.2) {$\scriptscriptstyle x=\lambda_1^3t$};
	\node at (1.5,1.6) {$\scriptscriptstyle \tilde{Q}_1$};
	\node at (2,2.2) {$\scriptscriptstyle  x=\lambda_3^5t$};
	\node at (2.5,1.6) {$\scriptscriptstyle \tilde{Q}_2$};
	\node at (3.4,2.2) {$\scriptscriptstyle x=\lambda_4^5t$};
	\node at (3.5,1.6) {$\scriptscriptstyle \tilde{Q}_3$};
	\node at (4.2,2.2) {$\scriptscriptstyle x=\lambda_5^5t$};
	\node at (4.2,1) {$\scriptscriptstyle Q_R$};
\end{tikzpicture}}
  \caption{Structure of the solution $Q$ of the generalised Riemann Problem (\ref{eqn:generaliesedRP}) for $N=1$ and $M=2$. The value of $Q$ is constant in each wedge of the $x-t$ plane. }.%
  \label{fig:Godunov}
\end{figure}
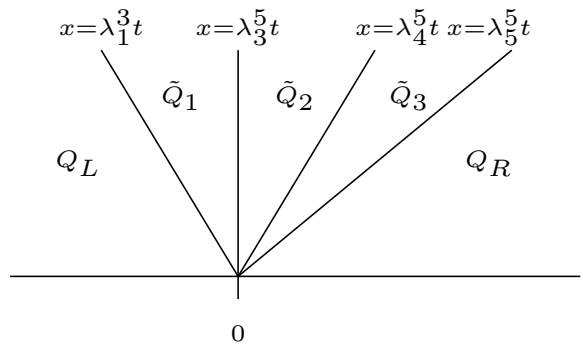
In \autoref{fig:Godunov}, the solution of the generalised Riemann
Problem (\ref{eqn:generaliesedRP}) is
visualised in the $x$-$t$ plane for $N=1$ and $M=2$. For $x<0$ we
consider the coefficient matrix $A^3$ and for $x\ge 0$ the matrix
$A^5$ to compute the waves.
While the negative eigenvalue $\lambda_1^{3}$ of $A^{3}$ gives
the wave speed of the left-going wave $\mathcal{W}^{1,3}$, the
positive eigenvalues $\lambda_4^{5}$ and $\lambda_5^5$ of
$A^{5}$ describe the wave speeds of the right-going waves.
Both matrices $A^3$ and $A^5$ have the eigenvalue
$\lambda=0$ as centered eigenvalue. The jump in $Q$ across this
centered wave follows from $\tilde{Q}_1$ and $\tilde{Q}_2$.
\begin{figure}[H]%
  \centering
\includegraphics[width=0.5\linewidth]{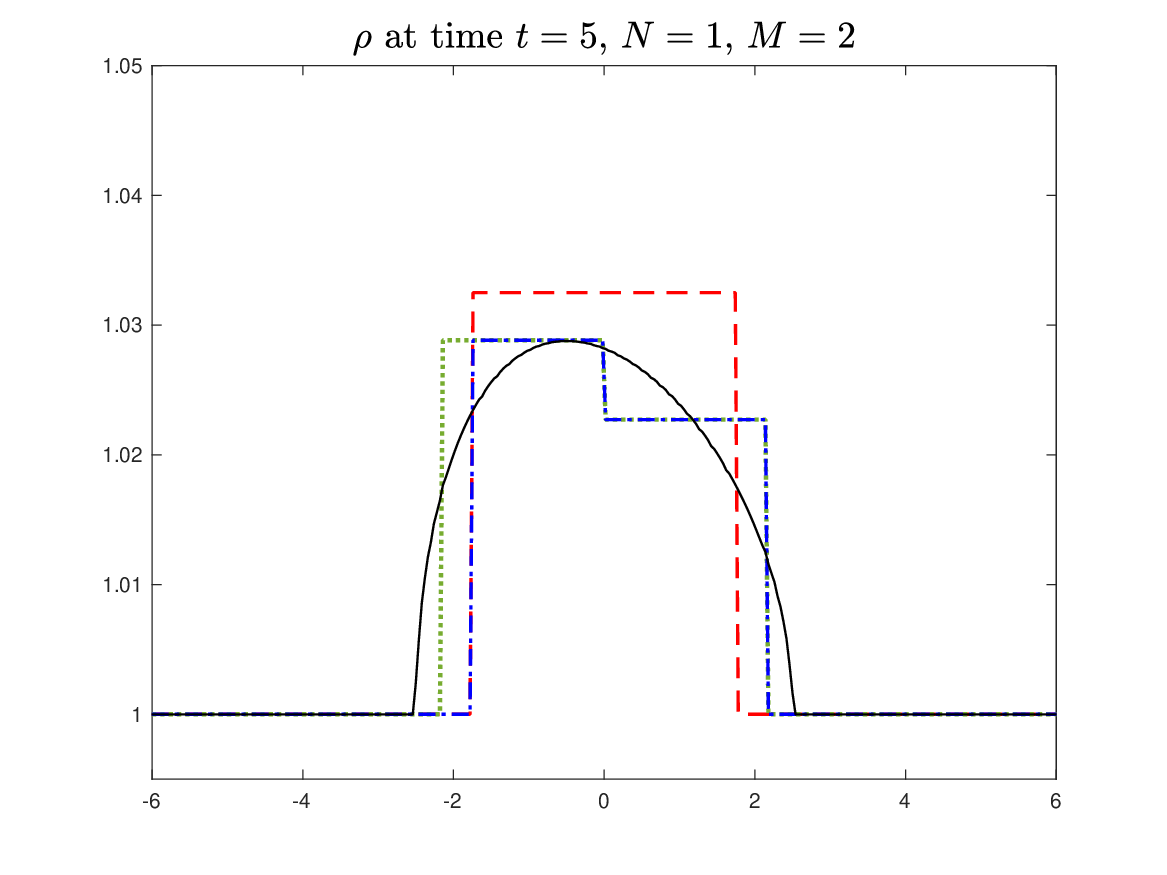}
     \caption{Solution $\rho$ of the generalised Riemann problem (\ref{eqn:generaliesedRP}) described in \cref{exa:Interface1} at time $t=5$. For $x<0$ we used $w_x/D_r=1$, for $x>0$ we used $w_x/D_r=4$. The blue dashed dotted curve is the solution for $N=1$ and $M=2$. The red dashed curve uses $N=M=1$ and the green dotted curve uses $N=M=2$ moment equations. The black solid curve is a highly resolved reference solution.} 
     \label{fig:generalisedRPN=1M=2}
\end{figure}
In \autoref{fig:generalisedRPN=1M=2}, the constructed solution of the
generalised Riemann problem in \cref{exa:Interface1} is visualised at
time $t=5$. The blue dashed-dotted curve shows the first component
$\rho$ of the solution vector $Q$ of the generalised Riemann problem
described
in \cref{exa:Interface1} coupling moment systems of order $N=1$ and
$M=2$. The red dashed curve uses $2N+1=2M+1=3$ moment equations
throughout the domain. The green dotted curve uses $2N+1=2M+1=5$
moment equations throughout the domain. The black solid curve is a
highly resolved reference solution. Note that $\rho$ is a Riemann
invariant of the centered wave for the $3 \times 3$ moment system and
of the second and fourth wave of the $5 \times 5$ moment system.
The jump in the moments at $x=0$ is neither an eigenvector of $A^3$ nor of $A^5$ but
instead follows from the coupling of the two different solutions.
\begin{figure}[H]%
\captionsetup[subfloat]{labelformat=empty}
  \centering
 \subfloat[][]{\includegraphics[width=0.3\linewidth]{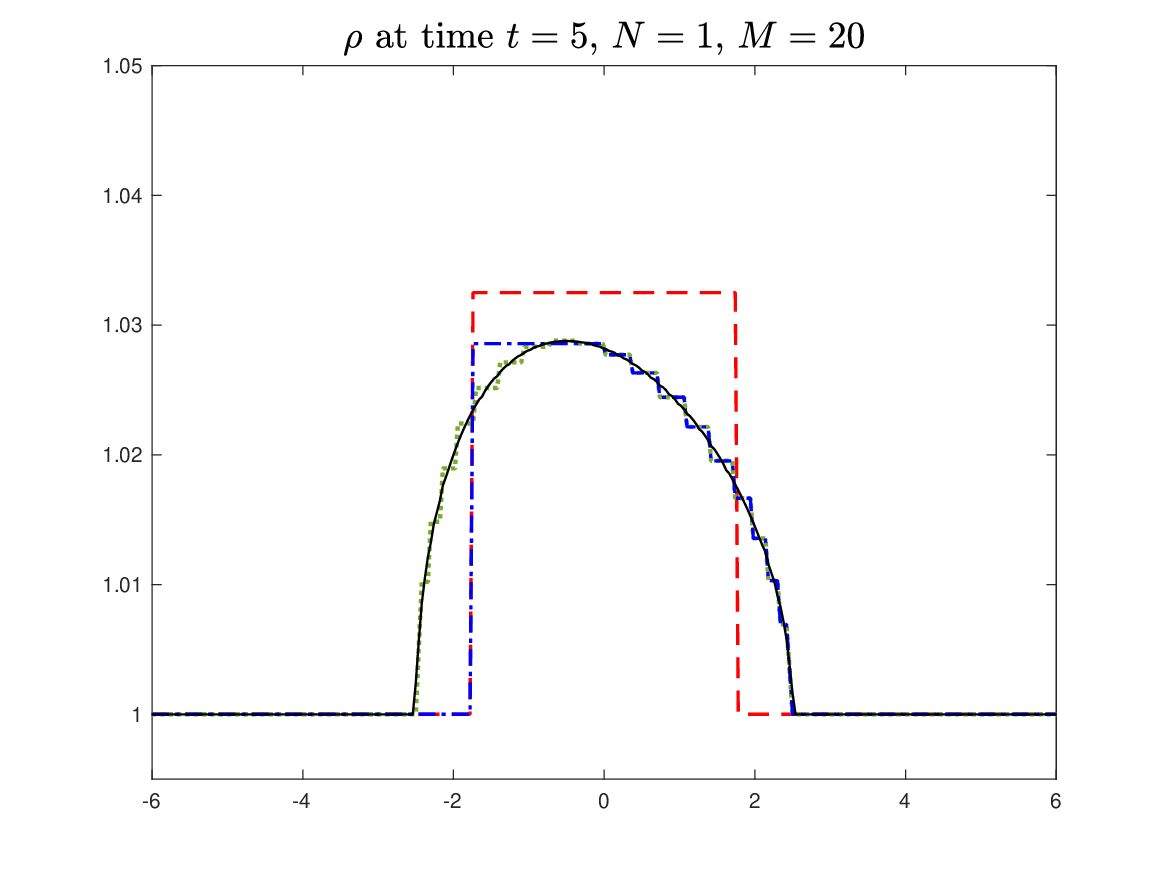}}
 \quad
 \subfloat[][]{\includegraphics[width=0.3\linewidth]{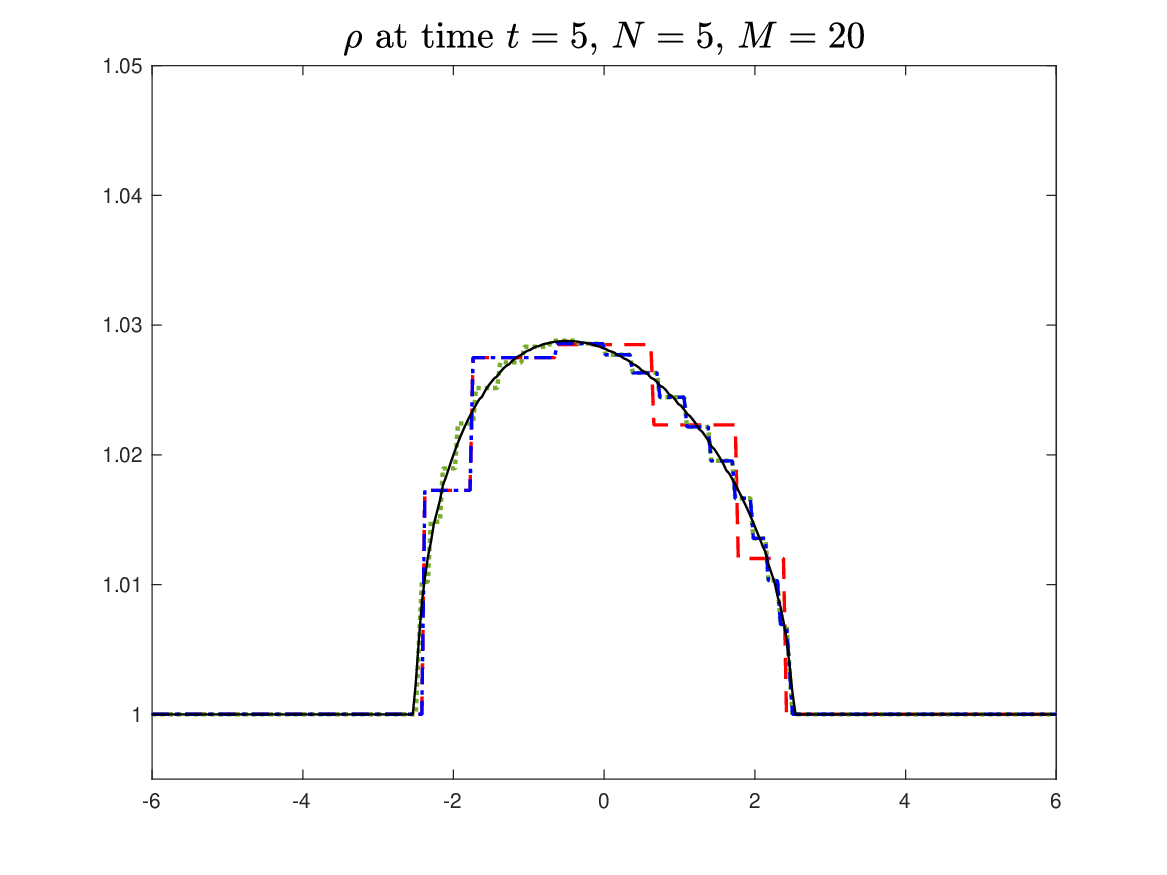}} 
 \quad \subfloat[][]{\includegraphics[width=0.3\linewidth]{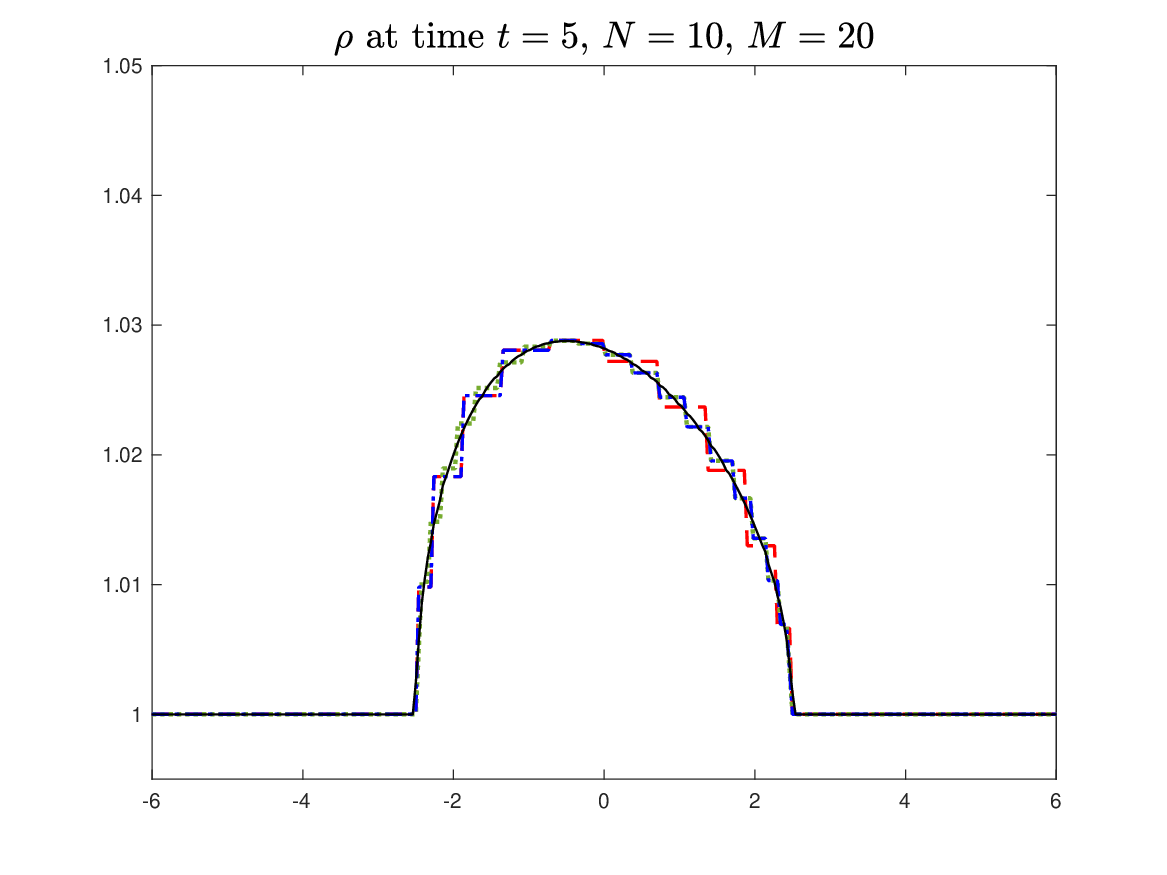}} 
 \caption{Solution $\rho$ of the generalised Riemann problem described in \cref{exa:Interface1} at time $t=5$. For $x<0$ we use $w_x/D_r=1$, for $x>0$ we use $w_x/D_r=4$. The blue dashed dotted curve uses different values of $N$ on the left hand side of the interface and $M=20$ on the right hand side of the interface. This solution is compared with a rough solution (red dashed curve) and a detailed solution (green dotted curve) which use $2N+1=2M+1$ moment equations throughout the domain. The black solid curve is a highly resolved reference solution. 
 }%
 \label{fig:InterfaceRP}
\end{figure}
\autoref{fig:InterfaceRP} shows that a spatial coupling of moment systems with different resolution leads to an accurate approximation of the reference solution once the resolution of the moment systems is large enough on both sides of the interface. The red dashed curve is a rough solution using moment systems of order $N=1$, $N=5$, and $N=10$ throughout the domain. For $N=1$ and $M=20$, the solution in $\rho$ (blue dashed dotted curve) roughly approximates the highly resolved reference solution (black solid curve). At lower computational costs, the approximation using $N=10$ and $M=20$ compares well with the detailed solution using $N=M=20$ (green dotted curve). 
\begin{figure}[H]%
\captionsetup[subfloat]{labelformat=empty}
  \centering
 \subfloat[][]{\includegraphics[width=0.3\linewidth]{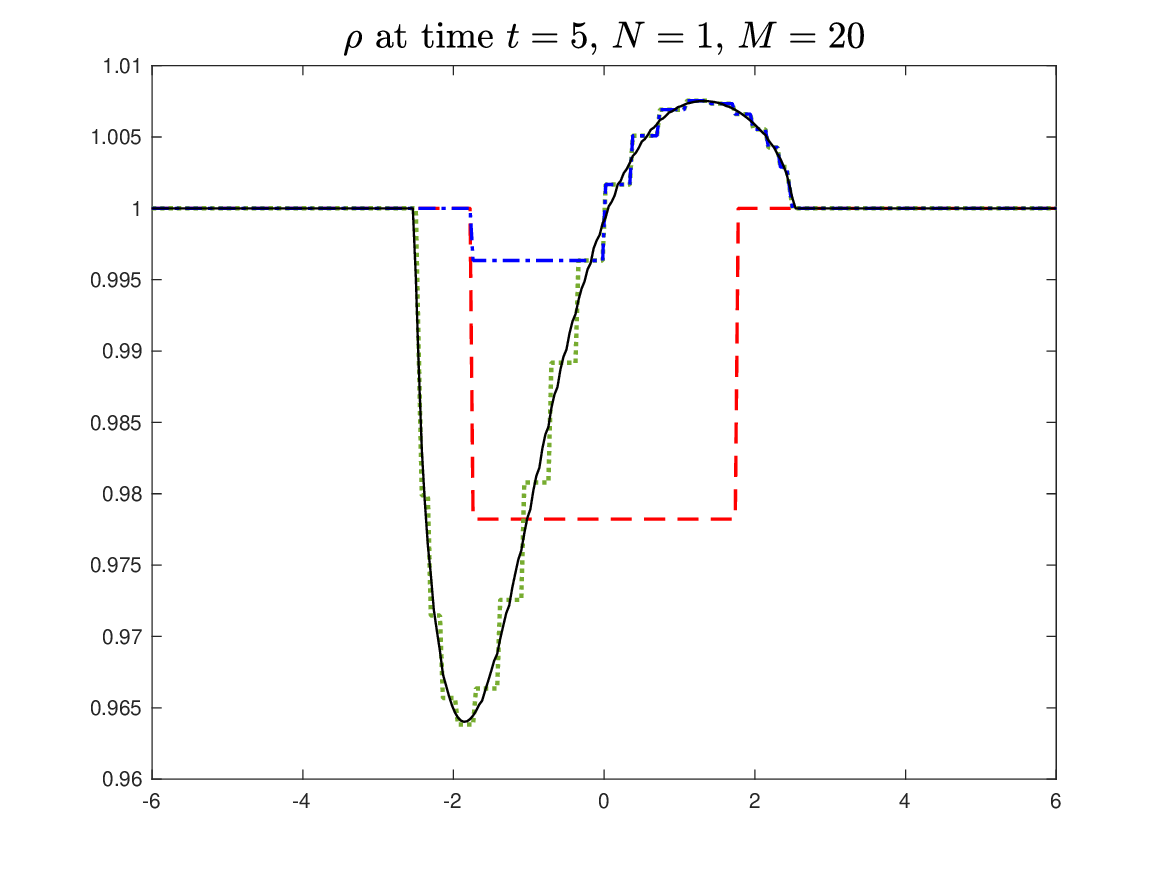}}
 \quad
 \subfloat[][]{\includegraphics[width=0.3\linewidth]{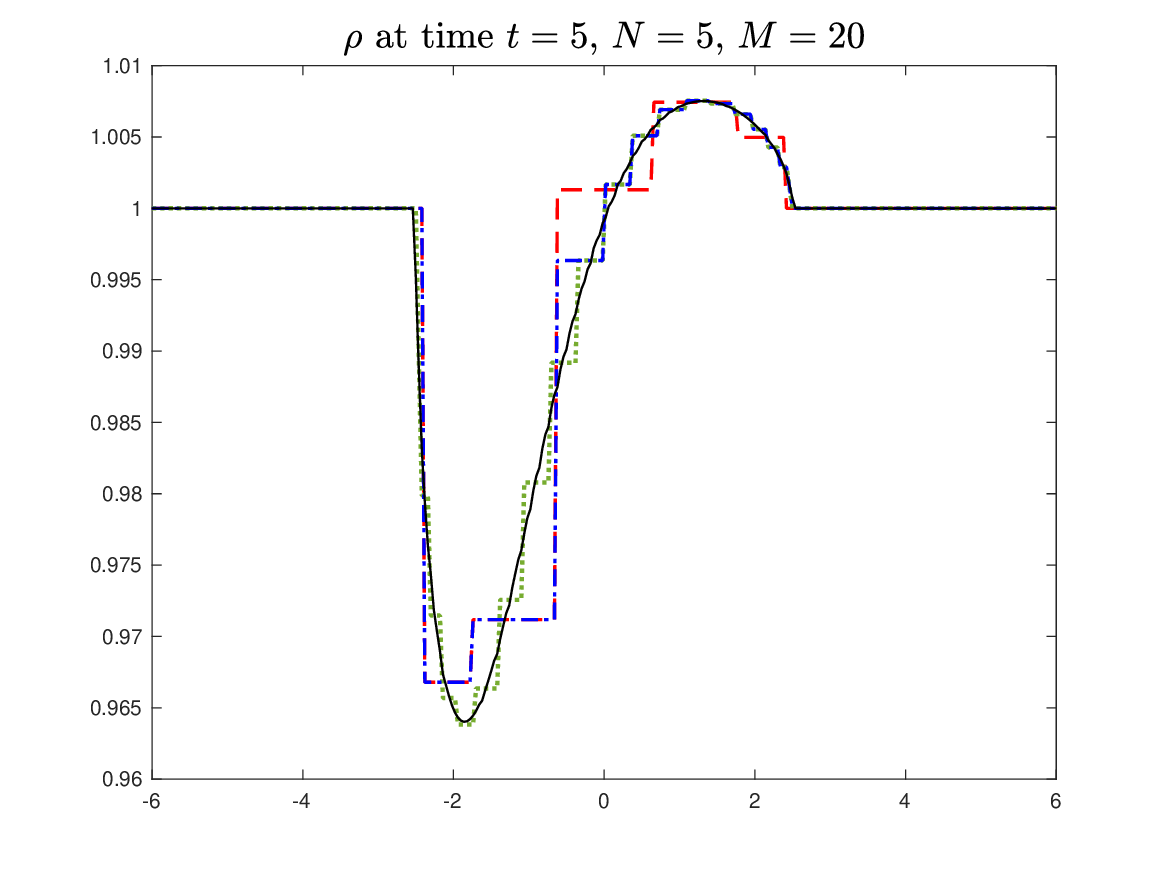}} 
 \quad
 \subfloat[][]{\includegraphics[width=0.3\linewidth]{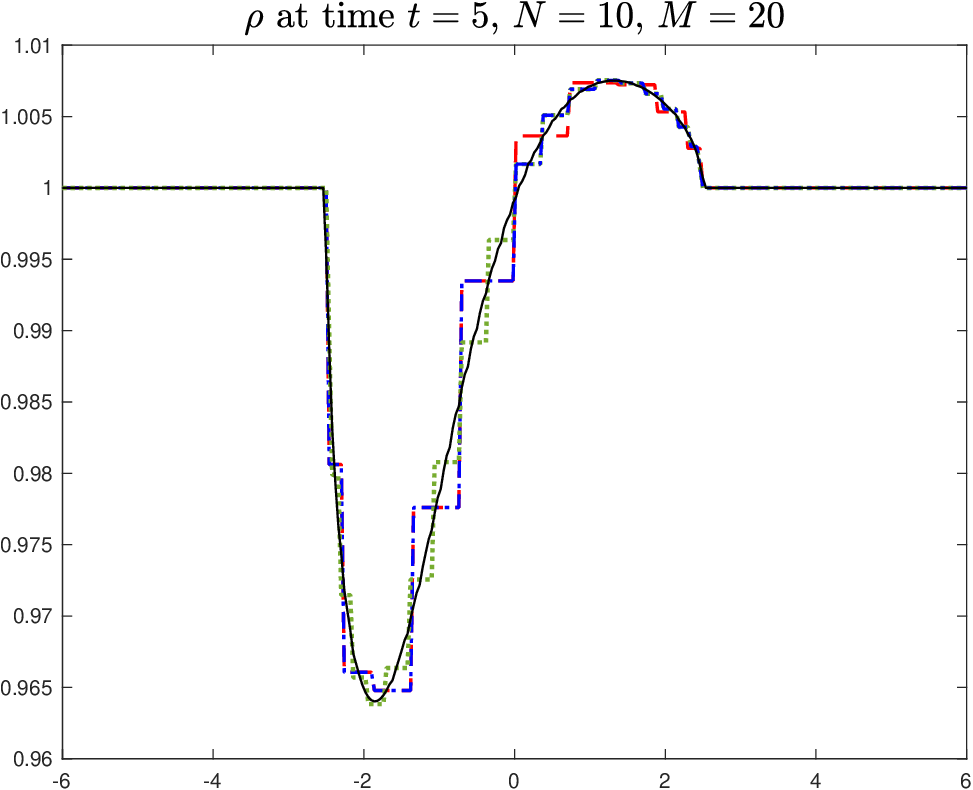}}
 \caption{Solution $\rho$ of the generalised Riemann problem described in \cref{exa:Interface1} at time $t=5$. For $x<0$ we used $w_x/D_r=10$, for $x>0$ we used $w_x/D_r=40$. The blue dashed dotted curve uses different values of $N$ on the left hand side of the interface and $M=20$ on the right hand side of the interface. This solution is compared with a rough solution (red dashed curve) and a detailed solution (green dotted curve) which use $2N+1=2M+1$ moment equations throughout the domain. The black solid curve is a highly resolved reference solution. 
 }%
 \label{fig:InterfaceRP2}
\end{figure}
In \autoref{fig:InterfaceRP2}, we consider an analogous test problem
as in \cref{exa:Interface1} but we used $w_x/D_r=10$ for $x<0$ and
$w_x/D_r=40$ for $x>0$ for the computation of the piecewise constant
initial values. Using $N=10$ and $M=20$ produces accurate
results. In the next section we will describe a wave propagation
algorithms which uses this wave decomposition of the generalised
Riemann problem. 
\subsubsection{Conservative Wave Propagation Algorithm for 1D Moment System with Different Resolution}
\label{subsubsec:conservativeWave}
We now develop a Wave Propagation Algorithm for the moment system with
different resolution. At grid cell interfaces with different 
numbers of moment equation the wave decomposition used in the numerical
method is based on the generalised Riemann problem
(\ref{eqn:generaliesedRP}) discussed in the
previous section.
As the flux function of the moment system on
the left hand side of the interface differs from the flux function on
the right hand side, fluctuations defined in analogy to the standard
form (\ref{eqn:fluc}) do not lead to a conservative method. In fact,
the condition (\ref{eqn:conservationCondition}), which guaranties
conservation in the standard case, is not even well defined if vectors
and matrices with different dimension are used across a grid cell
interface where the number of moment equations changes.

To derive a conservative Wave Propagation Algorithm for solving moment
systems with different numbers of moment equations, we use an
alternative procedure to define the fluctuations 
$\mathcal{A}^{\pm}\Delta Q_{i-\frac{1}{2}}$.  For homogeneous
linear hyperbolic systems (\ref{eqn:homSystem}) the fluctuations can
alternatively be defined using
\begin{equation}
\label{eqn:fluccon}
\mathcal{A}^+\Delta Q_{i-\frac{1}{2}}=A Q_i - A Q^*_{i-\frac{1}{2}},
\quad
\mathcal{A}^-\Delta Q_{i-\frac{1}{2}}=A Q^*_{i-\frac{1}{2}} - A Q_{i-1}.
\end{equation}
Here $Q_{i-\frac{1}{2}}^*$ is the solution of the Riemann problems
with piecewise constant initial values given by $Q_{i-1}$ and $Q_i$ at
the interface $x_{i-\frac{1}{2}}$.
This value can be computed using
\begin{equation}
\label{eqn:formel1}
Q^*_{i-\frac{1}{2}}=Q_{i-1}+\sum_{p:\lambda^{p}<0}\mathcal{W}^{p}_{i-1/2}
\end{equation}
or alternatively by using 
\begin{equation}
\label{eqn:formel2}
Q^*_{i-\frac{1}{2}}=Q_{i}-\sum_{p:\lambda^{p}>0}\mathcal{W}^{p}_{i-1/2}. 
\end{equation}
The stationary wave of the homogeneous Riemann problem with
constant number of moment equations can be ignored since
${\cal W}^{N+1,2N+1}$ is an eigenvector with eigenvalue zero and
therefore this wave does not contribute to the flux $A Q^*$ at the interface.

If the same number of moment equations is used in adjacent cells, we have the same flux function on both sides of the interface $x=x_{i-\frac{1}{2}}$. For $2N+1$ moment equations, formula (\ref{eqn:fluccon}) leads to 
\begin{equation*}
\begin{aligned}
\mathcal{A}^+\Delta
Q_{i-\frac{1}{2}}&=A^{2N+1}Q_i^{2N+1}-A^{2N+1}Q_i^{2N+1}
+\sum_{p=1}^{2N+1}\left(\lambda^{p,2N+1}\right)^+\mathcal{W}^{p,2N+1}_{i-\frac{1}{2}} \\
&= \sum_{p=1}^{2N+1}\left(\lambda^{p,2N+1}\right)^+\mathcal{W}^{p,2N+1}_{i-1/2},
\end{aligned}
\end{equation*}
\begin{equation*}
\begin{aligned}
\mathcal{A}^-\Delta Q_{i-\frac{1}{2}}&=A^{2N+1}Q_{i-1}^{2N+1}+\sum_{p=1}^{2N+1}\left(\lambda^{p,2N+1}\right)^-\mathcal{W}^{p,2N+1}_{i-\frac{1}{2}}-A^{2N+1}Q_{i-1}^{2N+1} \\
&=\sum_{p=1}^{2N+1}\left(\lambda^{p,2N+1}\right)^-\mathcal{W}^{p,2N+1}_{i-1/2}.
\end{aligned}
\end{equation*}
Analogously for adjacent cells in which $2M+1$ moments are used. 

At the interface between cells in which different numbers of moment equations are used, we solve Riemann problems between states $Q_{i-1}^{2N+1}$ and $Q_i^{2M+1}$ (or $Q_{i-1}^{2M+1}$ and $Q_i^{2N+1}$). To construct a conservative method, we assign both states at the interface to the flux function of the moment system of higher order. Again, we assume $2M+1>2N+1$. We extend the vector 
$Q_{i-1}^{2N+1}\in \mathbb{R}^{2N+1}$ to a vector of length $2M+1$ by adding $2M-2N$ zeros, i.e., we define 
\begin{equation*}
\tilde{Q}_{i-1}^{2N+1}:=\begin{pmatrix}
 Q_{i-1}^{2N+1} \\
0 \\
\vdots \\
0 
\end{pmatrix}
\in \mathbb{R}^{2M+1}.
\end{equation*} 
Moreover, we use (\ref{eqn:formel2}) for both fluctuations
$\mathcal{A}^{\pm}\Delta Q_{i-\frac{1}{2}}$, i.e.\ we set
$$Q_{i-\frac{1}{2}}^* = Q_{i}^{2M+1}-\sum_{p:\lambda^{p,2M+1}>0} {\cal W}_{i-\frac{1}{2}}^{p,2M+1}.$$  Then, the fluctuations are given as
\begin{align*}
\mathcal{A}^+\Delta Q_{i-\frac{1}{2}}&=A^{2M+1}Q_i^{2M+1}-A^{2M+1}Q^*_{i-\frac{1}{2}}  \nonumber\\ 
&=A^{2M+1}Q_i^{2M+1}-A^{2M+1}Q_i^{2M+1}+\sum_{p=1}^{2M+1}\left(\lambda^{p,2M+1}\right)^+\mathcal{W}^{p,2M+1}_{i-\frac{1}{2}}\nonumber \\
&=\sum_{p=1}^{2M+1}\left(\lambda^{p,2M+1}\right)^+\mathcal{W}^{p,2M+1}_{i-\frac{1}{2}}, \\
  \mathcal{A}^-\Delta Q_{i-\frac{1}{2}}&=A^{2M+1} Q^*_{i-\frac{1}{2}}
                                         -A^{2M+1} \tilde{Q}_{i-1}^{2N+1} \nonumber \\
&=A^{2M+1}Q_i^{2M+1}-\sum_{p=1}^{2M+1}\left(\lambda^{p,2M+1}\right)^+\mathcal{W}^{p,2M+1}_{i-\frac{1}{2}}-A^{2M+1}\tilde{Q}_{i-1}^{2N+1} \nonumber \\
&=A^{2M+1}\left(Q_i^{2M+1}-\tilde{Q}_{i-1}^{2N+1}\right)-\sum_{p=1}^{2M+1}\left(\lambda^{p,2M+1}\right)^+\mathcal{W}^{p,2M+1}_{i-\frac{1}{2}}. \nonumber
\end{align*}
Since ${\cal A}^- \Delta Q_{i-\frac{1}{2}}$ is the fluctuation due to
the left moving waves, this term updates the cell average values in
cell $(i-1)$ where we only use $2N+1$ moment equations. Therefore, we
only use the first $2N+1$ components of ${\cal A}^- \Delta
Q_{i-\frac{1}{2}}$ to update the cell averages in cell $(i-1)$. For
the second order correction terms at the interface $i-\frac{1}{2}$ we
use the $N+M+1$ waves and corresponding wave speeds discussed in
\autoref{subsec:GeneralisedRP}, i.e.\ we compute
$$
\tilde{F}_{i-\frac{1}{2}} = \frac{1}{2} \sum_{p=1}^N
|\lambda^{p,2N+1}|\left( 1 - \frac{\Delta t}{\Delta x}
  |\lambda^{p,2N+1}| \right) \tilde{\cal W}_{i-\frac{1}{2}}^{p,2N+1}
+
\frac{1}{2} \sum_{p=M+2}^{2M+1}
|\lambda^{p,2M+1}|\left( 1 - \frac{\Delta t}{\Delta x}
  |\lambda^{p,2M+1}| \right) \tilde{\cal W}_{i-\frac{1}{2}}^{p,2M+1}.
$$
To obtain vectors of the same length, we add zeros as components
$2N+2, \ldots, 2M+1$ to $\tilde{\cal W}_{i-\frac{1}{2}}^{p,2N+1}$ and
only use the first $2N+1$ components of the correction flux for the
update of the moments in cell $i-1$ but the whole vector for the
update in cell $i$.
The wave limiter described in \cite{LeV} limits waves based on a
comparison with neighbouring waves of the same family, i.e.\
neighbouring waves which
correspond to the same eigenvector are compared. In order to apply limiting for the
waves at the interface $i-\frac{1}{2}$ one needs to compute two
additional wave decompositions at the interfaces $i-\frac{3}{2}$ and
$i+\frac{1}{2}$. 

We summarise our results in the following theorem. 
\begin{theorem}
Let 
\begin{equation}
\begin{aligned}
\label{eqn:flucconneu1}
\mathcal{A}^+\Delta Q_{i-\frac{1}{2}}&=\sum_{p=1}^{2M+1}\left(\lambda^{p,2M+1}\right)^+\mathcal{W}^{p,2M+1}_{i-\frac{1}{2}},  \\
\mathcal{A}^-\Delta Q_{i-\frac{1}{2}}&=A^{2M+1}\left(Q_i^{2M+1}-\tilde{Q}_{i-1}^{2N+1}\right)-\sum_{p=1}^{2M+1}\left(\lambda^{p,2M+1}\right)^+\mathcal{W}^{p,2M+1}_{i-\frac{1}{2}}
\end{aligned}
\end{equation}
at interfaces between cells in which moment systems with $2N+1$ and $2M+1$, $M>N$, moment equations are used and 
\begin{equation}
\begin{aligned}
\label{eqn:fluct1}
\mathcal{A}^+\Delta Q_{i-\frac{1}{2}}&=\sum_{p=1}^{2N+1}\left(\lambda^{p,2N+1}\right)^+\mathcal{W}^{p,2N+1}_{i-1/2},  \\
\mathcal{A}^-\Delta Q_{i-\frac{1}{2}}&=\sum_{p=1}^{2N+1}\left(\lambda^{p,2N+1}\right)^-\mathcal{W}^{p,2N+1}_{i-1/2}
\end{aligned}
\end{equation}
at interfaces between cells in which moment systems with $2N+1=2M+1$ moment equations are used. Then the high-resolution Wave Propagation Algorithm 
\begin{equation}
\label{eqn:high}
Q_{i}^{n+1}=Q_{i}-\frac{\Delta t}{\Delta x}\left(\mathcal{A}^{+} \Delta Q_{i-1 / 2}+\mathcal{A}^{-} \Delta Q_{i+1 / 2}\right)-\frac{\Delta t}{\Delta x}\left(\widetilde{F}_{i+1 / 2}-\widetilde{F}_{i-1 / 2}\right)
\end{equation}
is a conservative method in the first $2N+1$ components for solving moment systems with different resolution in different spatial regions of the domain. 
\end{theorem}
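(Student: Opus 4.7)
The plan is to show that the update rule (\ref{eqn:high}), restricted to the first $2N+1$ components in every cell, can be written in flux-difference form. Once this is achieved, conservation (of the common moments $\rho, C_1, S_1, \dots, C_N, S_N$) follows by telescoping the resulting flux differences across all cell interfaces, together with the zero contribution of the second-order correction terms (which cancel pairwise between neighbouring cells).

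The key preliminary step is a structural identity for the coefficient matrices, which I expect to be the main (though elementary) obstacle. Concretely, I will verify from the explicit formulas (\ref{eqn:coefficient matrix}) that for $M>N$ one has $A^{2M+1}_{k,j}=A^{2N+1}_{k,j}$ whenever $k,j\le 2N+1$. This is because each additional $4\times 4$ sub-block added when passing from $2N+1$ to $2M+1$ moments sits strictly below and to the right of row/column $2N+1$, except possibly for entries that are zero by construction, and no new nonzero entry appears in the top-left $(2N+1)\times(2N+1)$ block. Combined with the zero-padding in $\tilde Q^{2N+1}_{i-1}$, this yields the identity
\[
(A^{2M+1}\tilde Q^{2N+1}_{i-1})_{1:2N+1}=A^{2N+1}Q^{2N+1}_{i-1},
\]
which is what permits a consistent flux to be defined across the resolution interface.

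With this in hand, I would set the interface flux to be $F_{i-1/2}:=A^{2M+1}Q^{*}_{i-1/2}$ at a mixed interface, with $Q^*_{i-1/2}$ as in (\ref{eqn:formel2}). By the very definition (\ref{eqn:flucconneu1}) the full $(2M+1)$-vector $\mathcal{A}^+\Delta Q_{i-1/2}$ equals $A^{2M+1}Q_i^{2M+1}-F_{i-1/2}$, so on the $2M+1$-side cell $i$ the update takes flux-difference form in all components. On the $2N+1$-side cell $i-1$, only the first $2N+1$ entries of $\mathcal{A}^-\Delta Q_{i-1/2}$ are used, and by the structural identity these equal $(F_{i-1/2})_{1:2N+1}-A^{2N+1}Q^{2N+1}_{i-1}$; thus cell $i-1$ also sees a genuine flux-difference in its first $2N+1$ components, with the same interface flux $F_{i-1/2}$ (restricted to the first $2N+1$ components). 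At uniform interfaces, the analogous identity follows immediately from the eigenvector decomposition and (\ref{eqn:conservationCondition}).

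Finally I would assemble the global conservation statement: summing the first $2N+1$ components of $Q_i^{n+1}-Q_i$ over all cells, the first-order fluctuations telescope because at each interface the left- and right-used fluxes coincide in the first $2N+1$ components. For the correction terms, the vector $\tilde F_{i-1/2}$ is a single object used with opposite signs in the neighbouring cells; its first $2N+1$ components enter both updates whereas the remaining components only enter the higher-resolution side, so the first $2N+1$ components of $\tilde F$ also telescope cleanly. This gives conservation of $\sum_i (Q_i)_{1:2N+1}\Delta x$, proving the theorem. The only non-routine verification is the structural lemma on the matrix entries outlined above; the rest is bookkeeping.
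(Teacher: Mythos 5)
Your proof is correct and follows essentially the same route as the paper's own argument: at a mixed interface the two fluctuations sum telescopically to $A^{2M+1}Q_i^{2M+1}-A^{2M+1}\tilde Q_{i-1}^{2N+1}$, i.e.\ to a flux difference built from the single interface flux $A^{2M+1}Q^{*}_{i-1/2}$, and the correction terms are already in flux-difference form. The one ingredient you make explicit that the paper leaves implicit is the block-structure identity $(A^{2M+1}\tilde Q^{2N+1}_{i-1})_k=(A^{2N+1}Q^{2N+1}_{i-1})_k$ for $k\le 2N+1$ (valid because the new anti-diagonal $4\times4$ blocks contribute no nonzero entries to the upper-left $(2N+1)\times(2N+1)$ sub-block), and this identity is indeed what makes the mixed-interface flux consistent, in the first $2N+1$ components, with the flux $A^{2N+1}Q^*$ used at the adjacent uniform interfaces, so that the global telescoping goes through.
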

\begin{proof}
At interfaces between cells in which moment systems with $2N+1$ and $2M+1$, $M>N$ are used, we have
\begin{equation*}
\begin{aligned}
&\mathcal{A}^-\Delta Q_{i-\frac{1}{2}}+\mathcal{A}^+\Delta Q_{i-\frac{1}{2}} \\
&=\sum_{p=1}^{2M+1}\left(\lambda^{p,2M+1}\right)^+\mathcal{W}^{p,2M+1}_{i-\frac{1}{2}}+A^{2M+1}\left(Q_i^{2M+1}-\tilde{Q}_{i-1}^{2N+1}\right)-\sum_{p=1}^{2M+1}\left(\lambda^{p,2M+1}\right)^+\mathcal{W}^{p,2M+1}_{i-\frac{1}{2}} \\
&= A^{2M+1}Q_i^{2M+1}-A^{2M+1}\tilde{Q}_{i-1}^{2N+1} \\
&= A^{2M+1} Q_i^{2M+1}- A^{2M+1} \tilde{Q}_{i-1}^{2N+1}.
\end{aligned}
\end{equation*}
At interfaces between cells in which moment systems with $2N+1=2M+1$ moment equations are used, the fluctuations (\ref{eqn:fluct1}) are defined in the standard form and obviously fulfil the conservation condition. As the second order correction terms are defined in flux difference form, (\ref{eqn:high}) leads to a conservative update. 
\end{proof}
The form of our moment equations (\ref{eqn:inhomogen})
shows that only $\rho$ is a conserved quantity. Our approximation of
the homogeneous moment system obtained by ignoring the source term
conserves the minimal number of moments used anywhere in the computational
domain by defining a unique flux at each grid cell interface. After
applying the source term update to the moment system only $\rho$ will
be conserved.  

\begin{figure}[H] 
     \centering
\begin{tikzpicture}[decoration=brace]
	\draw[] (0,0) -- (14.5,0) node[right] {};

	 \draw[] (2,0.0) -- (0.8,2) node[below]{}; 
	\draw[dashed] (2,-0.2) -- (2,2.2) node[below]{};
	\draw[] (2,0.0) -- (3.2,2) node[below]{};
	
        \draw[] (7,0.0) -- (5.8,2) node[below]{}; 
	\draw[dashed] (7,-0.2) -- (7,2.2) node[below]{};
	\draw[] (7,0.0) -- (8.2,2) node[below]{};
	\draw[] (7,0.0) -- (8.9,2) node[below]{};
	
	\draw[] (12,0.0) -- (10.1,2) node[below]{}; 
	\draw[] (12,0.0) -- (10.8,2) node[below]{}; 
	\draw[dashed] (12,-0.2) -- (12,2.2) node[below]{};
	\draw[] (12,0.0) -- (13.2,2) node[below]{};
	\draw[] (12,0.0) -- (13.9,2) node[below]{};
	
	\node at (2,-0.5) {$\scriptscriptstyle x_{i-\frac{3}{2}}$};
	\node at (7,-0.5) {$\scriptscriptstyle x_{i-\frac{1}{2}}$};
	\node at (7,-1.0) {$\scriptscriptstyle 0$};
	\node at (12,-0.5) {$\scriptscriptstyle x_{i+\frac{1}{2}}$};
	\node at (0.5,-0.5) {$\scriptscriptstyle Q_{i-2}$};
	\node at (4.5,-0.5) {$\scriptscriptstyle Q_{i-1}$};
	\node at (9.5,-0.5) {$\scriptscriptstyle Q_{i}$};
	\node at (13.5,-0.5) {$\scriptscriptstyle Q_{i+1}$};
		
	\node at (1.5,1.7) {$\scriptscriptstyle\mathcal{W}_{i-1/2}^{1,3}$};
	\node at (2.5,1.7) {$\scriptscriptstyle\mathcal{W}_{i-1/2}^{3,3}$};
	
	\node at (6.5,1.7) {$\scriptscriptstyle\mathcal{W}_{i-1/2}^{1,3}$};
	\node at (7.5,1.7) {$\scriptscriptstyle\mathcal{W}_{i-1/2}^{4,5}$};
	\node at (8.8,1.2) {$\scriptscriptstyle\mathcal{W}_{i+1/2}^{5,5}$};
	
	\node at (10.4,1.2) {$\scriptscriptstyle\mathcal{W}_{i-1/2}^{1,5}$};
	\node at (11.5,1.7) {$\scriptscriptstyle\mathcal{W}_{i+1/2}^{2,5}$};
	\node at (12.5,1.7) {$\scriptscriptstyle\mathcal{W}_{i+1/2}^{4,5}$};
	\node at (13.8,1.2) {$\scriptscriptstyle\mathcal{W}_{i+1/2}^{5,5}$};
	
\end{tikzpicture}
     \caption{Schematic diagram of Godunov's method for solving the generalised Riemann Problem (\ref{eqn:generaliesedRP}) for $N=1$ and $M=2$. The Riemann problem is solved at each cell interface.} 
     \label{fig:generalisedRP}
\end{figure}
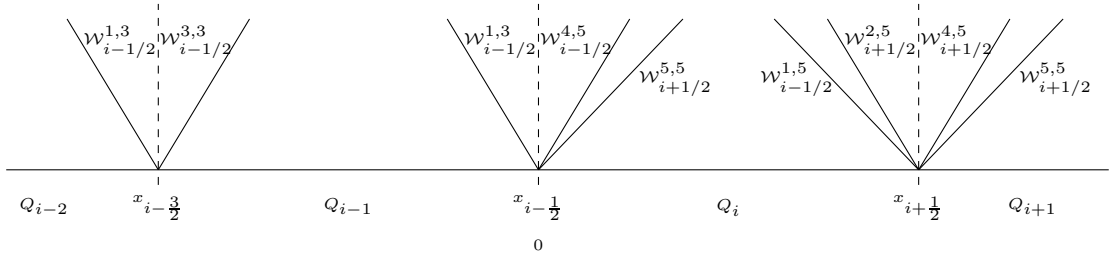
\autoref{fig:generalisedRP} gives a schematic diagram of Godunov's
method for solving the generalised Riemann problem
(\ref{eqn:generaliesedRP}) for $N=1$ and $M=2$. At the interface
$x_{i-\frac{3}{2}}$, the Riemann problem between the states
$Q_{i-2}^3$ and $Q_{i-1}^3$ has to be solved. The flux function at
this interface is given as $A^3Q_{i-\frac{3}{2}}^*$. Analogously, the
flux function at the interface $x_{i+\frac{1}{2}}$ is given as
$A^5Q_{i-\frac{1}{2}}^*$. At the interface $x_{i-\frac{1}{2}}$, three
moment equations are used on the left and five moment equations on the
right hand side of the interface. To get a method which is
conservative in the first three components, we choose the coefficient
matrix $A^5$ to compute the numerical flux function. 
\section{Bulk-Coupling of Moment Equations with Flow Equations}	
\label{sec:Bulk}
In this section, we study the numerical discretisation of the one- and two-dimensional hyperbolic moment systems coupled to the diffusion equation (\ref{eqn:diffusion}) or the two-dimensional Navier-Stokes equation (\ref{eqn:Navier}). 
\subsection{Bulk-Coupling for Shear Flow}	
\label{subsec:shear}
We consider the one-dimensional moment system (\ref{eqn:inhomogen}) coupled to (\ref{eqn:diffusion}). We discretise the spatial domain $\Omega=[x_l,x_r]$ and the time variable $t$ in the same way as described in \autoref{subsec:wave1d}. We define the discrete values of the velocity $w(x,t)$ and of the vector of moments $Q(x,t)$ on a staggered grid as visualised in \autoref{fig:grid}.
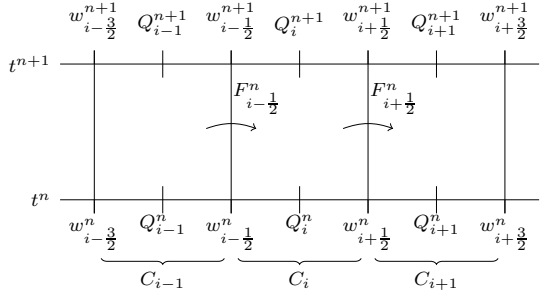
\begin{figure}[H]%
  \centering
  \scalebox{.9}{
\begin{tikzpicture}[decoration=brace]
	\draw[] (8.5,0) -- (1.5,0) node[left] {$\scriptstyle t^n$};
         \draw[] (8.5,2) -- (1.5,2) node[left] {$\scriptstyle t^{n+1}$};
	\draw[] (2,-0.2) -- (2,2.2) node[below]{};
	\draw[] (4,-0.2) -- (4,2.2) node[below]{};
	\draw[] (6,-0.2) -- (6,2.2) node[below]{};
	\draw[] (8,-0.2) -- (8,2.2) node[below]{};
\node (A) at (3.5,1) {}; \node (B) at (4.5,1) {};
\draw[->] (A) to[bend right=-20] node[above] {$\quad\quad  \scriptstyle F^{n}_{i-\frac{1}{2}}$} (B);
\node (C) at (5.5,1) {}; \node (D) at (6.5,1) {};
\draw[->] (C) to[bend right=-20] node[above] {$\quad\quad  \scriptstyle F^{n}_{i+\frac{1}{2}}$} (D);
	
	\draw (2,-.2) -- (2,0) node[below=4pt] {$\scriptstyle w_{i-\frac{3}{2}}^n$};
	\draw (4,-.2) -- (4,0) node[below=4pt] {$\scriptstyle w_{i-\frac{1}{2}}^n$};
	\draw[decorate, yshift=-4ex] (3.9,-.2) -- node[below=0.4ex] {$\scriptstyle C_{i-1}$} (2.1,-0.2);
	\draw (6,-.2) -- (6,0) node[below=4pt] {$\scriptstyle w_{i+\frac{1}{2}}^n$};
	\draw[decorate, yshift=-4ex] (5.9,-0.2) -- node[below=0.4ex] {$\scriptstyle C_i$} (4.1,-0.2);
	\draw (8,-.2) -- (8,0) node[below=4pt] {$\scriptstyle w_{i+\frac{3}{2}}^n$};
	\draw[decorate, yshift=-4ex] (7.9,-0.2) -- node[below=0.4ex] {$\scriptstyle C_{i+1}$} (6.1,-0.2);
	
	\draw (3,-.2) -- (3,0.2) node[below=8pt] {$\scriptstyle Q_{i-1}^n$};
	\draw (5,-.2) -- (5,0.2) node[below=8pt] {$\scriptstyle Q_{i}^n$};
	\draw (7,-.2) -- (7,0.2) node[below=8pt] {$\scriptstyle Q_{i+1}^n$};
	
	\draw (2,2) -- (2,2.2) node[above=1pt] {$\scriptstyle w_{i-\frac{3}{2}}^{n+1}$};
	
	\draw (4,2) -- (4,2.2) node[above=1pt] {$\scriptstyle w_{i-\frac{1}{2}}^{n+1}$};
	\draw (6,2) -- (6,2.2) node[above=1pt] {$\scriptstyle w_{i+\frac{1}{2}}^{n+1}$};
	\draw (8,2) -- (8,2.2) node[above=1pt] {$\scriptstyle w_{i+\frac{3}{2}}^{n+1}$};
	
	\draw (3,1.8) -- (3,2.2) node[above=2pt] {$\scriptstyle Q_{i-1}^{n+1}$};
	\draw (5,1.8) -- (5,2.2) node[above=2pt] {$\scriptstyle Q_{i}^{n+1}$};
	\draw (7,1.8) -- (7,2.2) node[above=2pt] {$\scriptstyle Q_{i+1}^{n+1}$};
\end{tikzpicture}}
\caption{Illustration of the staggered grid used for the discretisation of the coupled moment system for shear flow.}
\label{fig:grid}
\end{figure}
The discrete value of the velocity at time $t^n$ is stored at the nodes of the grid, i.e.
\begin{equation*}
w^n_{i+\frac{1}{2}}\approx w\left(x_{i+\frac{1}{2}},t^n\right), \quad i=1,\ldots,m+1
\end{equation*}
approximates the point value on the interface $x_{i+\frac{1}{2}}$ at
time $t^n$. The discrete value of the moments at time $t^n$ is stored
at the midpoints of the grid cell, see (\ref{eqn:Q1d}).

We compute the numerical solution of the coupled moment system for shear flow with an operator splitting method in which we separately approximate the different components of the coupled moment system. We use Strang splitting for solving the inhomogeneous diffusion equation as well as for solving the inhomogeneous system of moment equations. The steps of the algorithm for solving the coupled moment system for one time step are presented in \Cref{shearalgo}. The approach is comparable to the splitting method presented by Cheng and Knorr \cite{Cheng} for the Vlasov-Poisson equation. While in \cite{Cheng} a Poisson equation is considered, we have an inhomogeneous diffusion equation. 

\begin{algorithm}[H]
\caption{Operator splitting algorithm for solving the coupled moment system for shear flow.}\label{shearalgo}
\begin{itemize}
  \item[1.] $\frac{1}{2}\Delta t $ step on \hspace{4mm} $\partial_tQ(x,t)=\phi(Q(x,t))$. 
  \item[2.]   $\frac{1}{4}\Delta t $ step on \hspace{4mm} $\partial_t w(x,t)=\delta(\bar{\rho}-\rho)$. \hspace{4.5cm} \raisebox{1mm}{$\tikzmark{listing-2-end}$}
  
  \item[3.]   $\frac{1}{2}\Delta t $ step on \hspace{2mm} $\partial_t
    w(x,t)
    -\partial_{xx}w(x,t)=0$. Calculate $\partial_x w(x,t)$.
  \item[4.]    $\frac{1}{4}\Delta t $ step on \hspace{4mm} $\partial_t
    w(x,t)=\delta(\bar{\rho}-\rho)$. \hspace{4.5cm}\raisebox{-1mm}{$\tikzmark{listing-4-end}$}
  
   \item[5.]  $\Delta t $ step on \hspace{6mm} $\partial_tQ(x,t)+A\partial_xQ(x,t)=0$. 
   \item[6.]  $\frac{1}{4}\Delta t $ step on \hspace{4mm} $\partial_t w(x,t)=\delta(\bar{\rho}-\rho)$. \hspace{4.5cm} \raisebox{1mm}{$\tikzmark{listing-6-end}$}
    \item[7.] $\frac{1}{2}\Delta t $ step on \hspace{4mm} $\partial_t
      w(x,t)
      -\partial_{xx}w(x,t)=0$. Calculate $\partial_x w(x,t)$. 
   \item[8.]  $\frac{1}{4}\Delta t $ step on \hspace{4mm} $\partial_t w(x,t)=\delta(\bar{\rho}-\rho)$. \hspace{4.5cm} \raisebox{-1mm}{$\tikzmark{listing-8-end}$}
   \item[9.]  $\frac{1}{2}\Delta t $ step on \hspace{4mm} $\partial_tQ(x,t)=\phi(Q(x,t))$. 
\end{itemize}

\AddNote{listing-2-end}{listing-4-end}{listing-2-end}{ first half time step of Strang splitting for the flow equation}
\AddNote{listing-6-end}{listing-8-end}{listing-6-end}{ second half time step of Strang splitting for the flow equation}
\end{algorithm} 

The system of ordinary differential equations resulting from the source term of the moment system is solved with the classical Runge-Kutta method. For each time step, the update of the discrete velocity field $w(x,t)$ is computed with the Crank-Nicolson method for periodic solutions. This solution is used to calculate
\begin{equation*}
\partial_x w \left(x_i,t^n\right)=\frac{w_{i+\frac{1}{2}}^n-w_{i-\frac{1}{2}}^n}{\Delta x}, \quad i=1,\ldots,m.
\end{equation*}
We calculate the solution of the homogeneous system of moment equations with the high-resolution Wave Propagation Algorithm by LeVeque, described in \autoref{subsec:wave1d}. We use the test case that was already considered in \cite{Dahm} to study the accuracy of this approach.
\begin{example}
\label{exa:accuracy}
We study the one-dimensional moment system (\ref{eqn:inhomogen}) coupled to (\ref{eqn:diffusion}) with initial data on the interval $[0,100]$ of the form 
\begin{equation*}
\begin{aligned}
\rho(x,0)&=\exp\left(-(x-50)^2\right), \\
C_i(x,0)&=S_i(x,0)=0, \quad i=1,...,N, \\
w(x,0)&=0
\end{aligned}
\end{equation*}
and periodic boundary conditions. The parameters are set to
$D_r=0.01$, $Re = 1$ and $\delta=1$. We compute the solution of $\rho$ at time $t=30$.  
\end{example}
In \Cref{EOC1.1}, we present a convergence study for the problem in
\cref{exa:accuracy} for different values of $N$. As there is no
analytical solution for the coupled problem for shear flow, we use a
highly resolved numerical solution of the coupled problem calculated
on a very fine grid with 8192 grid cells as a reference solution. We
compare the highly resolved solution of $\rho$ with the numerical
solution of $\rho$ on coarse grids for different values of $N$. As the
grids are chosen in the way that all grid points on coarser grids are
also grid points on the fine grid, we can compare the numerical
solutions of $\rho$ on coincident grid points. In the first test, the highly resolved solution and the coarse solution use the same number of moment equations. We show the $L_\infty$-error and the experimental order of convergence (EOC), computed by comparing the error on two different grids
\begin{equation*}
EOC=\frac{\log\left(\|\rho_{n}-\rho_{n}^{ex}\|_\infty\Big/\|\rho_{2n}-\rho_{2n}^{ex}\|_\infty\right)}{\log(2)}. 
\end{equation*}
 $\rho_{n}$ denotes the numerical solution computed on a coarse grid
 with $n$ grid cells in $x$ at time $t=30$. $\rho_{n}^{ex}$ is the
 reference solution which is computed on a fine grid in $x$ at time
 $t=30$ and subsequently projected onto the grid with $n$ cells. In all computations for \Cref{EOC1.1}, we discretised the coupled problem for shear flow with the methods presented in \Cref{shearalgo}. The results in \Cref{EOC1.1} confirm second order convergence rates. 
 
 \begin{table}[H]
\centering
\begin{tabular}{|l|ll|ll|ll|ll|}
\hline
                       &         N=1              &                      &         N=2              &  &   N=3               
             &    &   N=10 &   \\\hline
\multicolumn{1}{|l|}{grid} & \multicolumn{1}{l|}{$L_{\infty}$-Error} &EOC & \multicolumn{1}{l|}{$L_{\infty}$-Error} &\multicolumn{1}{l|}{EOC}& \multicolumn{1}{l|}{$L_{\infty}$-Error} &\multicolumn{1}{l|}{EOC} & \multicolumn{1}{l|}{$L_{\infty}$-Error} &\multicolumn{1}{l|}{EOC}\\ \hline

\multicolumn{1}{|l|}{512} & \multicolumn{1}{l|}{$5.8162\cdot 10^{-3}$} & \multicolumn{1}{l|}{} & \multicolumn{1}{l|}{$2.1098\cdot 10^{-3}$}& \multicolumn{1}{l|}{} & \multicolumn{1}{l|}{$8.4068\cdot 10^{-4}$}  &&\multicolumn{1}{l|}{$1.0134\cdot 10^{-3}$}& \\

\multicolumn{1}{|l|}{1024} & \multicolumn{1}{l|}{$1.7213\cdot 10^{-3}$} & \multicolumn{1}{l|}{1.75} & \multicolumn{1}{l|}{$5.8559\cdot 10{-4}$}& \multicolumn{1}{l|}{1.85} & \multicolumn{1}{l|}{$2.3662\cdot 10^{-4}$} & \multicolumn{1}{l|}{1.83}&\multicolumn{1}{l|}{$2.9458\cdot 10^{-4}$}& 1.78\\

\multicolumn{1}{|l|}{2048} & \multicolumn{1}{l|}{$4.4172\cdot 10^{-4}$} & \multicolumn{1}{l|}{1.96} & \multicolumn{1}{l|}{$1.4176\cdot 10^{-4}$}& \multicolumn{1}{l|}{2.05} & \multicolumn{1}{l|}{$6.5280\cdot 10^{-5}$} & \multicolumn{1}{l|}{1.86}  &\multicolumn{1}{l|}{$7.5300\cdot 10^{-5}$} &1.97\\
  \hline
\end{tabular}
\caption{\label{EOC1.1} Accuracy study for the coupled problem for shear flow using $N=1,2,3,10$ moment equations. The reference solution uses the same number of moment equations as the coarse solution. The models  are coupled with \Cref{shearalgo}. }
\end{table}
Next, we calculate the reference solution on a highly resolved grid
with 8192 grid cells
using $N=20$ moment equations. We compare this reference solution with
the numerical solution on a coarse grid using $N=\{3,6,10,15\}$ moments in \Cref{EOC2.1}.
\begin{table}[H]
\centering
\begin{tabular}{|l|ll|ll|ll|ll|}
\hline
                       &         N=3              &                      &         N=6              &  &   N=10               
             &    &   N=15 &   \\\hline
\multicolumn{1}{|l|}{grid} & \multicolumn{1}{l|}{$L_{\infty}$-Error} &EOC & \multicolumn{1}{l|}{$L_{\infty}$-Error} &\multicolumn{1}{l|}{EOC}& \multicolumn{1}{l|}{$L_{\infty}$-Error} &\multicolumn{1}{l|}{EOC} & \multicolumn{1}{l|}{$L_{\infty}$-Error} &\multicolumn{1}{l|}{EOC} \\ \hline

\multicolumn{1}{|l|}{256} & \multicolumn{1}{l|}{$9,0513\cdot 10^{-3}$} & \multicolumn{1}{l|}{}  & \multicolumn{1}{l|}{$2.4774\cdot 10^{-3}$}  & \multicolumn{1}{l|}{$$} & \multicolumn{1}{l|}{$4.2754\cdot 10^{-3}$}  & \multicolumn{1}{l|}{$$}&  \multicolumn{1}{l|}{$4.4779\cdot 10^{-3}$}& \multicolumn{1}{l|}{} \\

\multicolumn{1}{|l|}{512} & \multicolumn{1}{l|}{$8.4361\cdot 10^{-3}$} & \multicolumn{1}{l|}{0.10}  & \multicolumn{1}{l|}{$6.9811\cdot 10^{-4}$}  & \multicolumn{1}{l|}{1.83} & \multicolumn{1}{l|}{$1.2723\cdot 10^{-3}$} & \multicolumn{1}{l|}{1.75}  & \multicolumn{1}{l|}{$1.3225\cdot 10^{-3}$} & \multicolumn{1}{l|}{1.76} \\

\multicolumn{1}{|l|}{1024} & \multicolumn{1}{l|}{$8.3212\cdot 10^{-3}$} & \multicolumn{1}{l|}{0.02} & \multicolumn{1}{l|}{$3.3731\cdot 10^{-4}$} & \multicolumn{1}{l|}{1.05} & \multicolumn{1}{l|}{$3.5854\cdot 10^{-4}$} & \multicolumn{1}{l|}{1.83}  & \multicolumn{1}{l|}{$3.3623\cdot 10^{-4}$}& \multicolumn{1}{l|}{1.98} \\

\multicolumn{1}{|l|}{2048} & \multicolumn{1}{l|}{$8.2830\cdot 10^{-3}$} & \multicolumn{1}{l|}{0.007}  & \multicolumn{1}{l|}{$3.3305\cdot 10^{-4}$} & \multicolumn{1}{l|}{0.02}  & \multicolumn{1}{l|}{$8.5690\cdot 10^{-5}$} & \multicolumn{1}{l|}{2.06}  & \multicolumn{1}{l|}{$8.8320\cdot 10^{-5}$} & \multicolumn{1}{l|}{1.93}\\
%
  \hline

\end{tabular}
\caption{\label{EOC2.1} Accuracy study for the coupled shear flow problem using $N=3,6,10,15$. The reference solution uses $N=20$ moment equations. The models are coupled with \Cref{shearalgo}. }
\end{table}
The accuracy study in \Cref{EOC2.1} illustrates the convergence due to
grid refinement as well as due to an increase of the number of
moments.
For $N=3$ the error due to an inadequate 
number of moment equations dominates the error on all grids
and we do not observe convergence as the mesh is refined. For $N=6$
the discretisation error dominates the error on relatively coarse
grids. On finer grids the error due to an insufficient value of $N$ dominates
the total error. Therefore, we only observe the expected second order
convergence rates on  the coarser grids. For $N\ge 10$ the
discretisation error dominates the total error on all considered grids
and we observe the expected second order convergence rates as the grid
is refined.
From the different values of the error on grids with 1024 or 2048
cells one can also observe how an increase of the number of moment
equations leads to a decrease of the error. 


In the following example, we use the discretisation presented in
\autoref{sec:numerical} and \autoref{sec:Bulk} to again solve the coupled
moment system for shear flow. Now we adapt the number of moment equations
locally in order to resolve the solution structure accurately and efficiently.
Based on the results of \autoref{sec:estimate}, we will use the magnitude
of the residuum as an error indicator and choose the number of moments accordingly.
\begin{example}
\label{exa:interface} 
We consider the moment system for shear flow (\ref{eqn:inhomogen}) coupled to the diffusion equation (\ref{eqn:diffusion}) with initial data on the interval $[0,100]$ of the form
\begin{equation*}
\label{eqn:generaliesedRP2}
\begin{array}{ccccc}
\rho(x,0)=\exp\left(-10(x-50)^2\right) \\[8pt]
w(x,0)=0. 
\end{array}
\end{equation*}
All other moments are initially set to zero.
We use the parameters $D_r=0.01$ and $\delta=1$ and periodic boundary conditions.
The solution is computed at time $t=50$ and the zeroth order moment
$\rho$ will be shown.
We compare this solution using different levels of detail with the solution of the coupled problem for shear flow using the same number of moment equations throughout the domain. 
\end{example}
In \autoref{fig:N=123}, the solution of the coupled moment system using the same number of moment equations throughout the domain is illustrated.

\begin{figure}[H]%
\captionsetup[subfloat]{labelformat=empty}
  \centering
 \subfloat[][]{\includegraphics[width=0.3\linewidth]{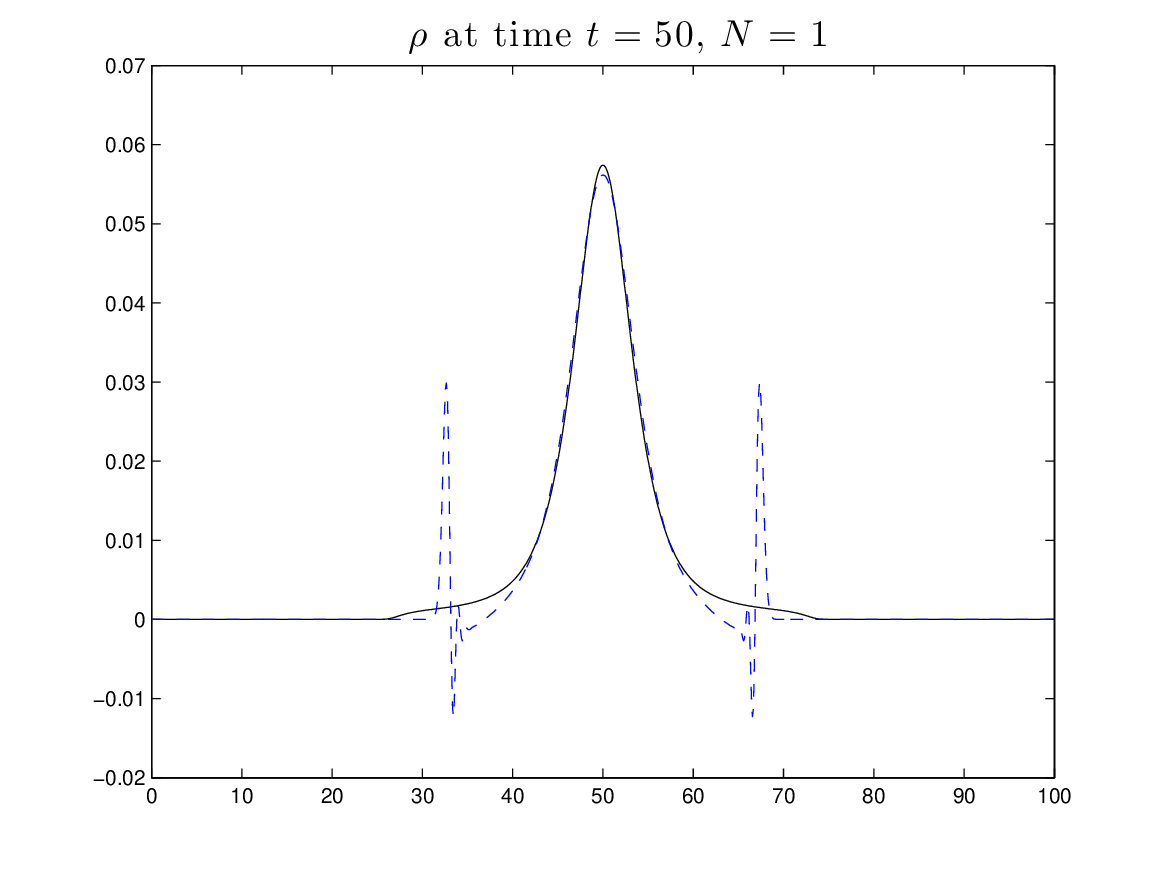}}
 \quad
 \subfloat[][]{\includegraphics[width=0.3\linewidth]{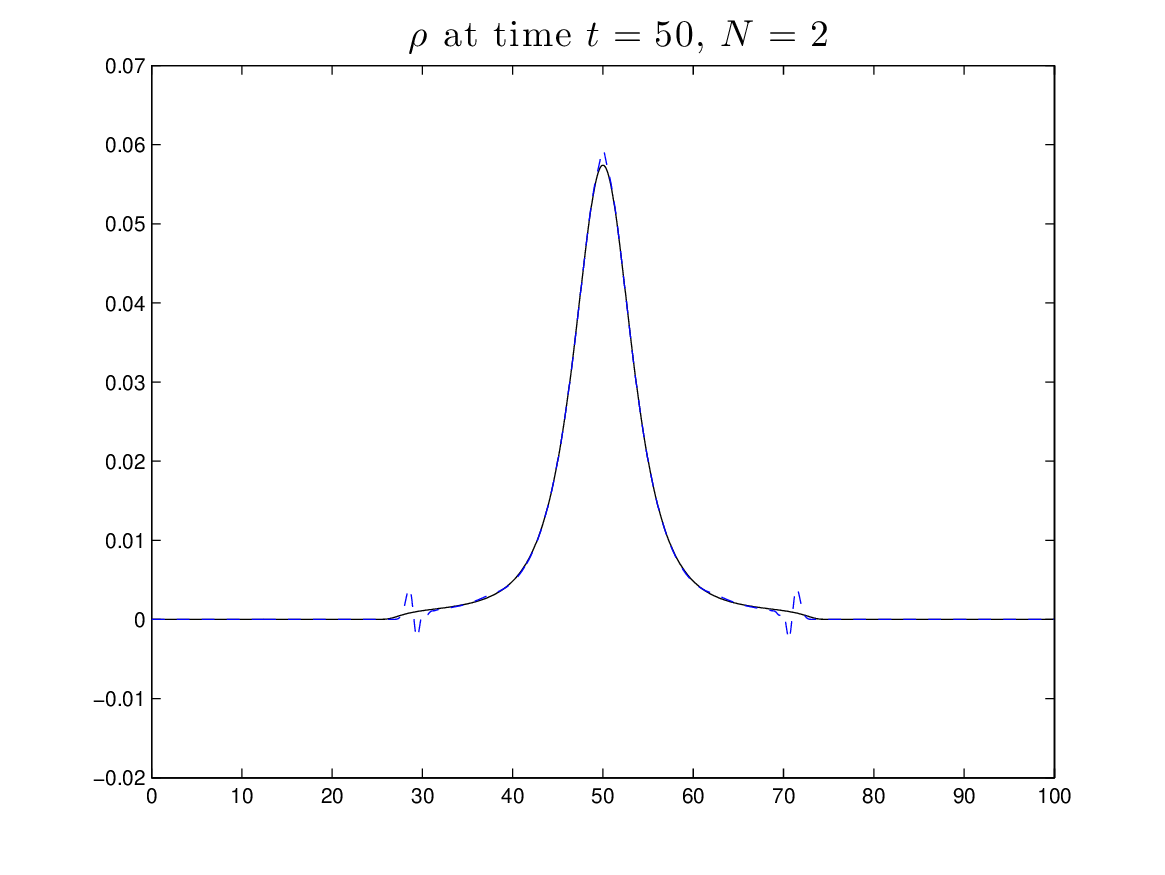}}
 \quad
 \subfloat[][]{\includegraphics[width=0.3\linewidth]{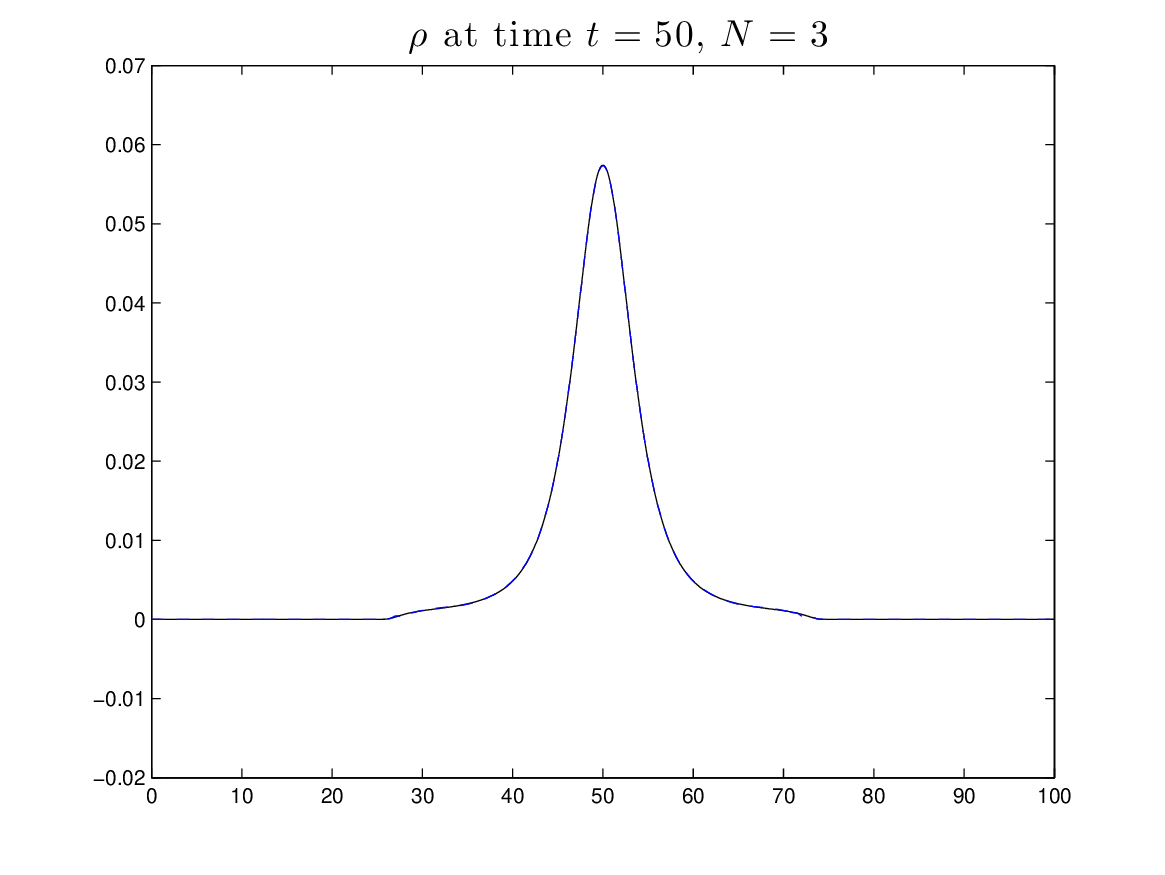}} \\
 \caption{Approximation of the coupled problem for shear flow as described in \cref{exa:interface}. The blue dashed dotted curve shows the density at time $t=50$ for different values of $N$. The black solid line is a reference solution.}
    \label{fig:N=123}%
\end{figure}

We plot the density $\rho$ at time $t=50$ for $N=1$, $N=2$ and $N=3$
as a blue dashed line. The black solid line is a reference solution
using $N=20$. Using $N=1$, i.e.\ only
the three moment equations for $\rho$, $C_1$ and $S_1$,
approximations of the coupled fluid-moment system lead to negative and
thus
unphysical values in density  for $ x \in (30,40) \cup
(60,70)$. In spatial regions of low density the solution of the moment
system using  $N=1$ moment equations approximates the highly resolved
solution very well. The solution of the coupled moment system using
$N=2$ moments still leads to negative values of density in the
intervals $[25,35]$ and $[65,75]$. Also in the area of the highest
density $[45,55]$, the moment system using $N=1$ or $N=2$ leads to
visible deviations
from the solution structure of the reference solution. The density
computed by the coupled moment system with $N=3$ moment equations
compares well with the reference solution and does not show any
unphysical values.

The analytical considerations of \cref{sec:estimate} suggest that the
quantities
\begin{equation*}
 |\hat{R}_{2N+2}|:= |\frac{1}{4} \partial_x {S}_N +\frac{N+1}{2}
 \partial_x w
 S_N|, \quad
 |\hat{R}_{2N+3}|:= |\frac{1}{4} \partial_x C_N + \frac{N+1}{2}
 \partial_x w C_N| 
\end{equation*}
can be used as error indicators. In Figures
\ref{fig:errorUnderResolved_R1} and \ref{fig:errorUnderResolved_R2} we plot
these quantities for the numerical solution at time $t=50$ using
$N=1,2,3$.

\begin{figure}[H]%
\captionsetup[subfloat]{labelformat=empty}
  \centering
 \subfloat[][]{\includegraphics[width=0.3\linewidth]{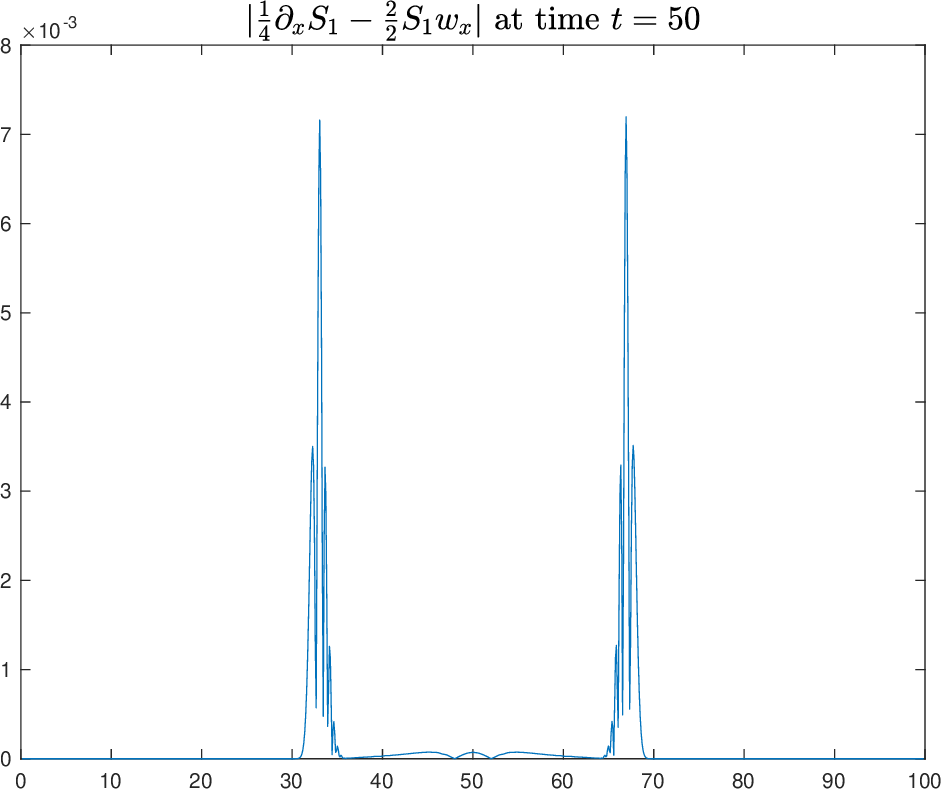}}
 \quad
 \subfloat[][]{\includegraphics[width=0.3\linewidth]{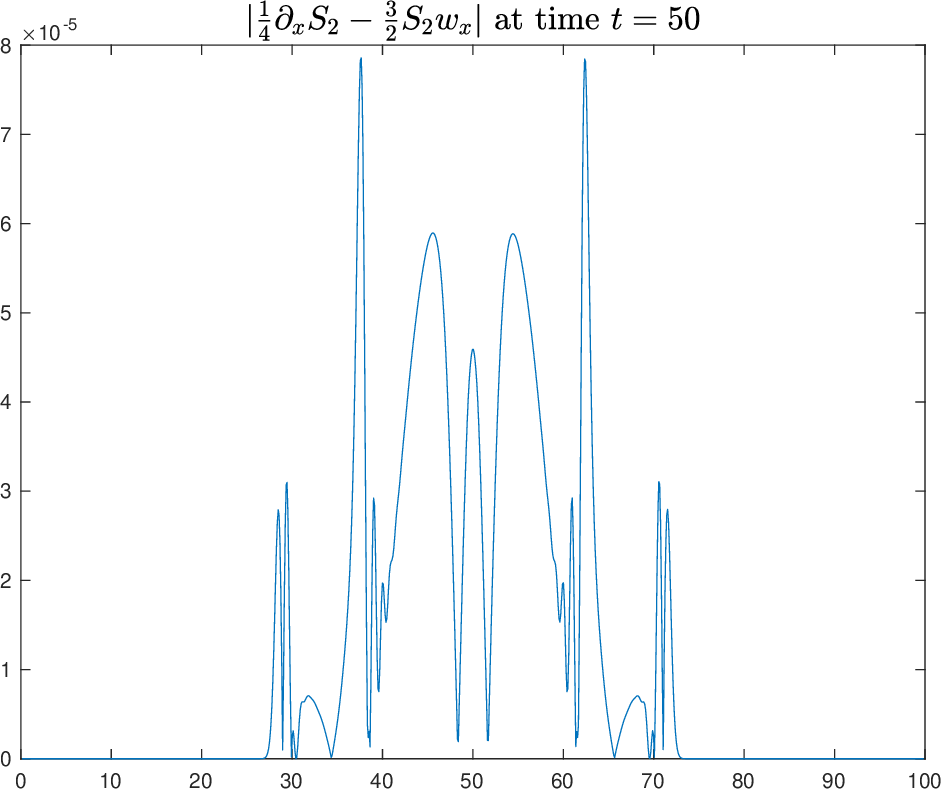}}
 \quad
 \subfloat[][]{\includegraphics[width=0.3\linewidth]{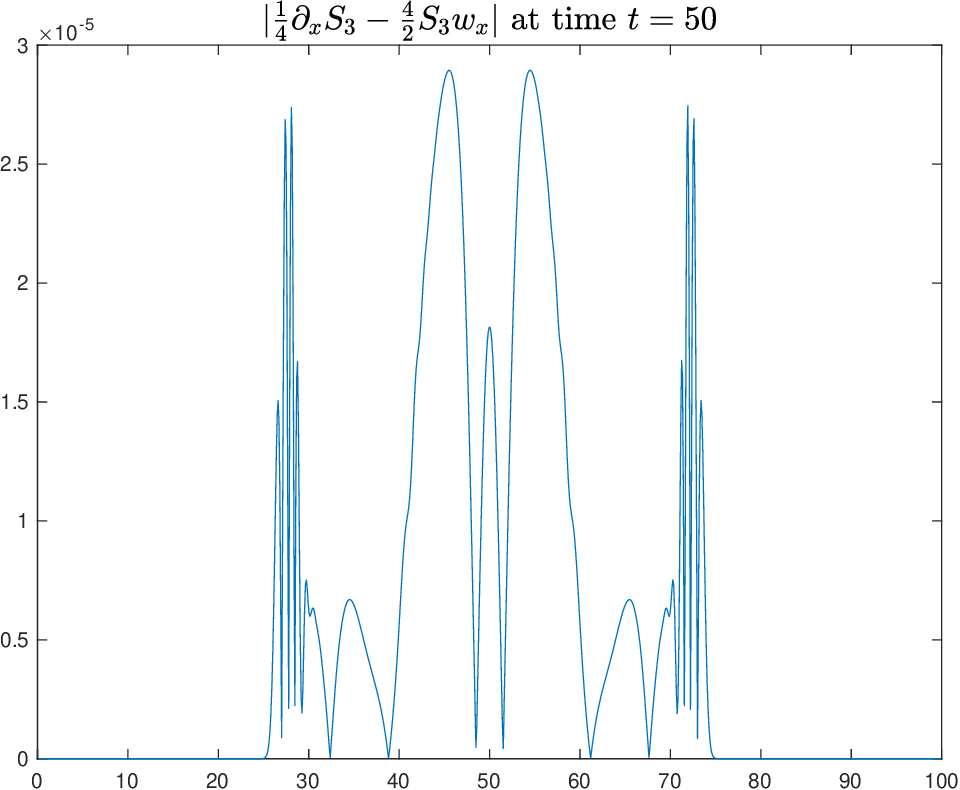}}
 \caption{Error indicators $|\hat{R}_{2N+2}|$ at time $t=50$ for
   $N=1$, $N=2$ and $N=3$. Note the different scaling of the $y$ axis.}
    \label{fig:errorUnderResolved_R1}%
  \end{figure}
  \begin{figure}[H]%
\captionsetup[subfloat]{labelformat=empty}
  \centering
 \subfloat[][]{\includegraphics[width=0.3\linewidth]{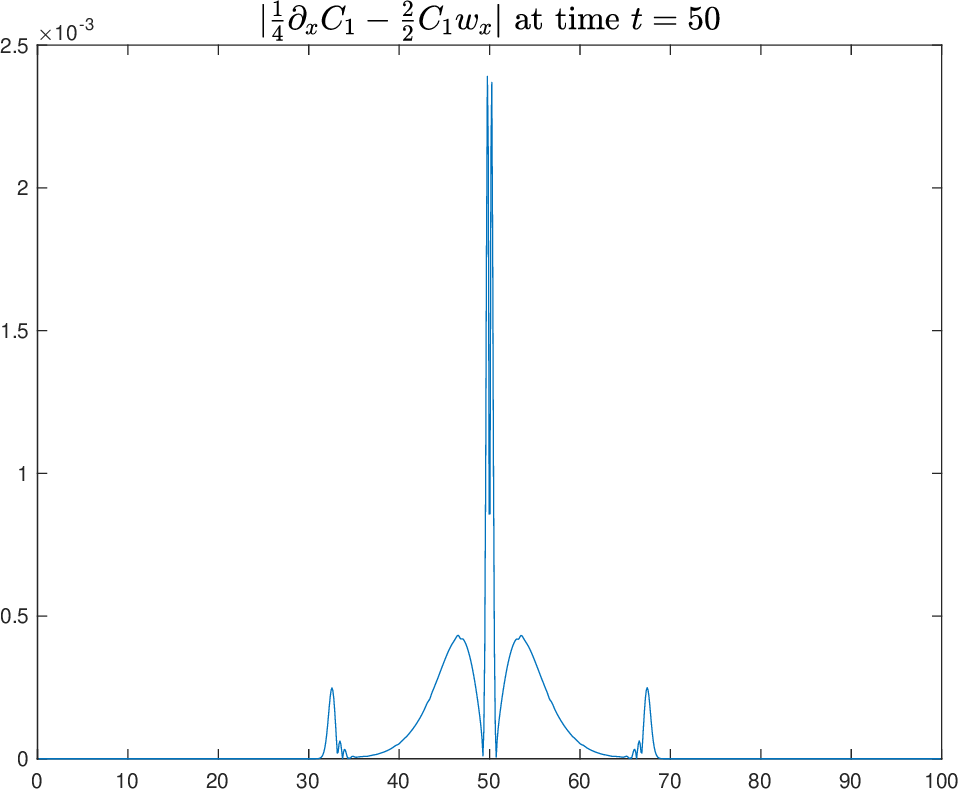}}
 \quad
 \subfloat[][]{\includegraphics[width=0.3\linewidth]{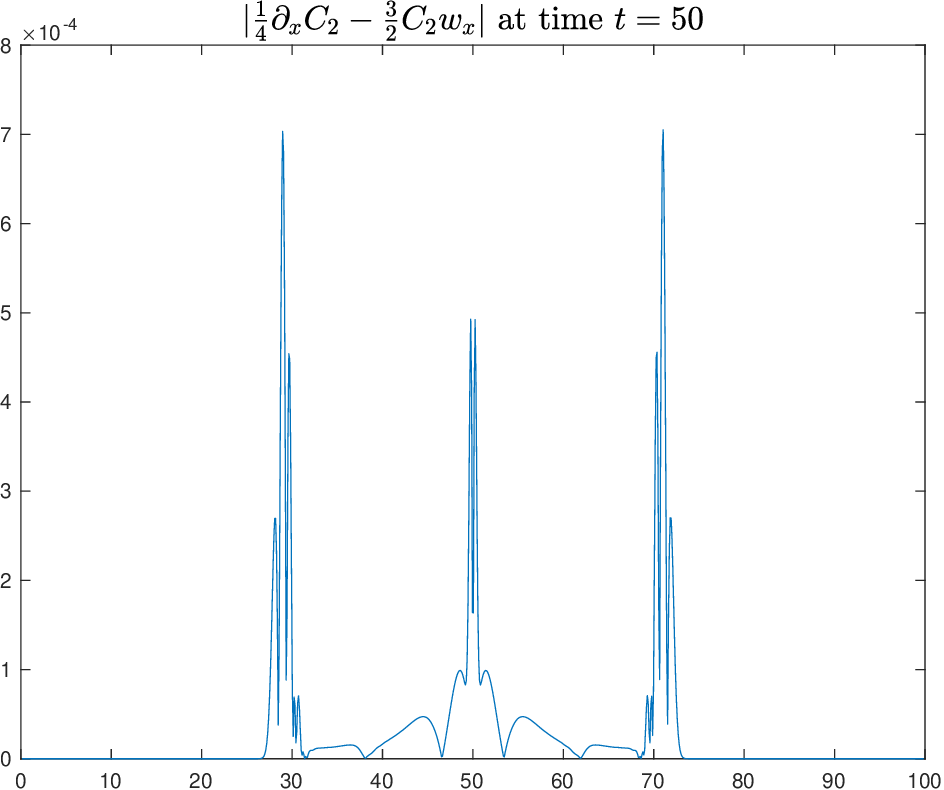}}
 \quad
 \subfloat[][]{\includegraphics[width=0.3\linewidth]{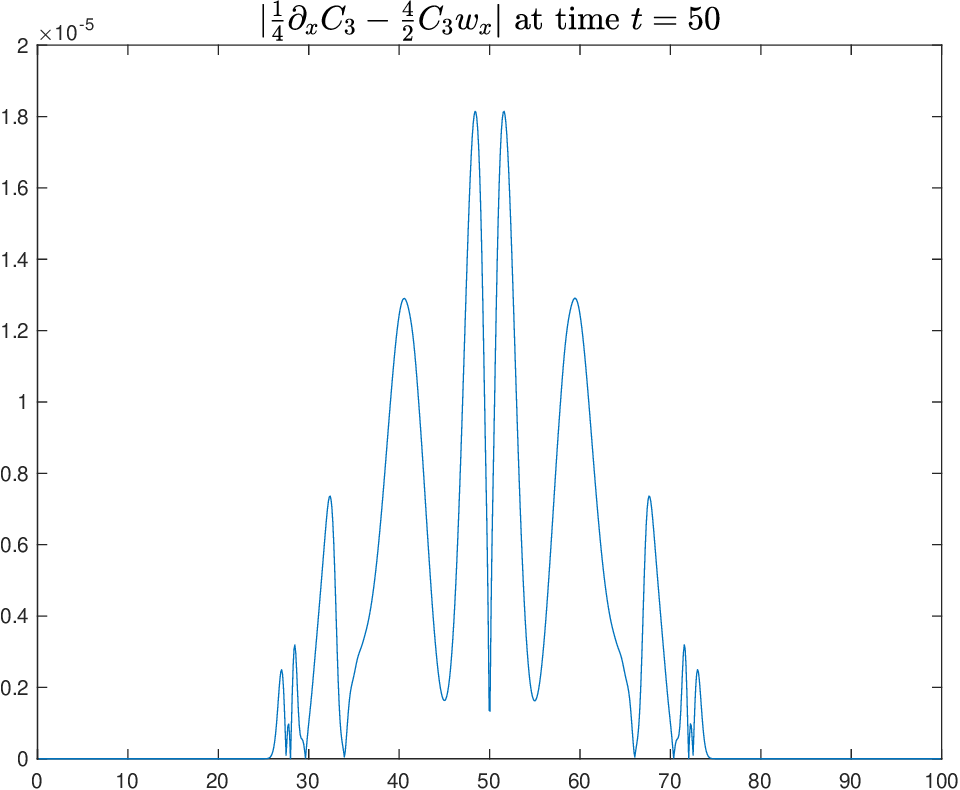}}
 \caption{Error indicators $|\hat{R}_{2N+3}|$ at time $t=50$ for
   $N=1$, $N=2$ and $N=3$. Note the different scaling of the $y$ axis.}
    \label{fig:errorUnderResolved_R2}%
  \end{figure}
For $N=1$, we can see that the magnitude of $|\hat{R}_4|$ has maximal
values for $ x \in (30,40) \cup (60,70)$.
Precisely in these intervals, the moment $\rho$ for $N=1$ has
unphysical values in
\autoref{fig:N=123}.
The error indicator
$|\hat{R}_5|$ has its maximum at the center. For $N=2$ both
$|\hat{R_{6}}|$ and $|\hat{R}_7|$ indicate the largest error within
the interval $[27,73]$ but the magnitude of the error indicators for
$N=2$ are more than an order of magnitude smaller than for $N=1$. For
$N=3$ the magnitude of the error indicators $|\hat{R}_8|$ and
$|\hat{R}_9|$ decrease further.  In this case   the solution structure of $\rho$ compares well with the reference solution.  

Finally, in \autoref{fig:errorResolved} we show the two components of the
error indicator for the highly resolved reference solution using
$N=20$.
Now both error indicators have values on the level of machine
precision. 
\begin{figure}[H]%
\captionsetup[subfloat]{labelformat=empty}
  \centering
 \subfloat[][]{\includegraphics[width=0.3\linewidth]{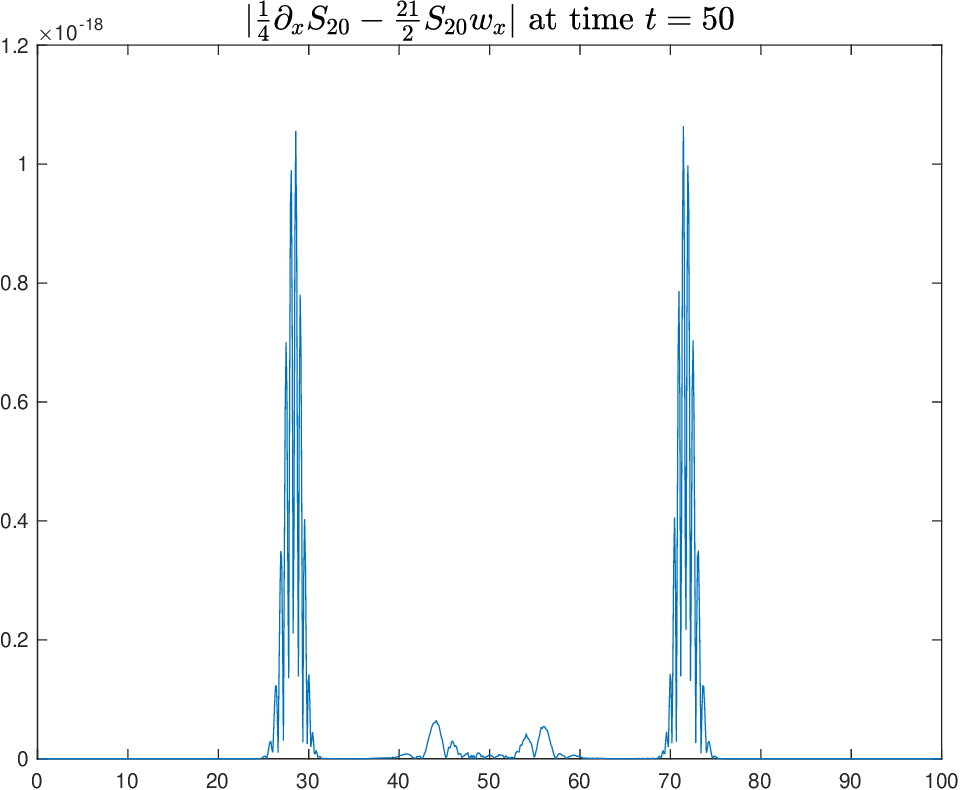}}
 \quad
 \subfloat[][]{\includegraphics[width=0.3\linewidth]{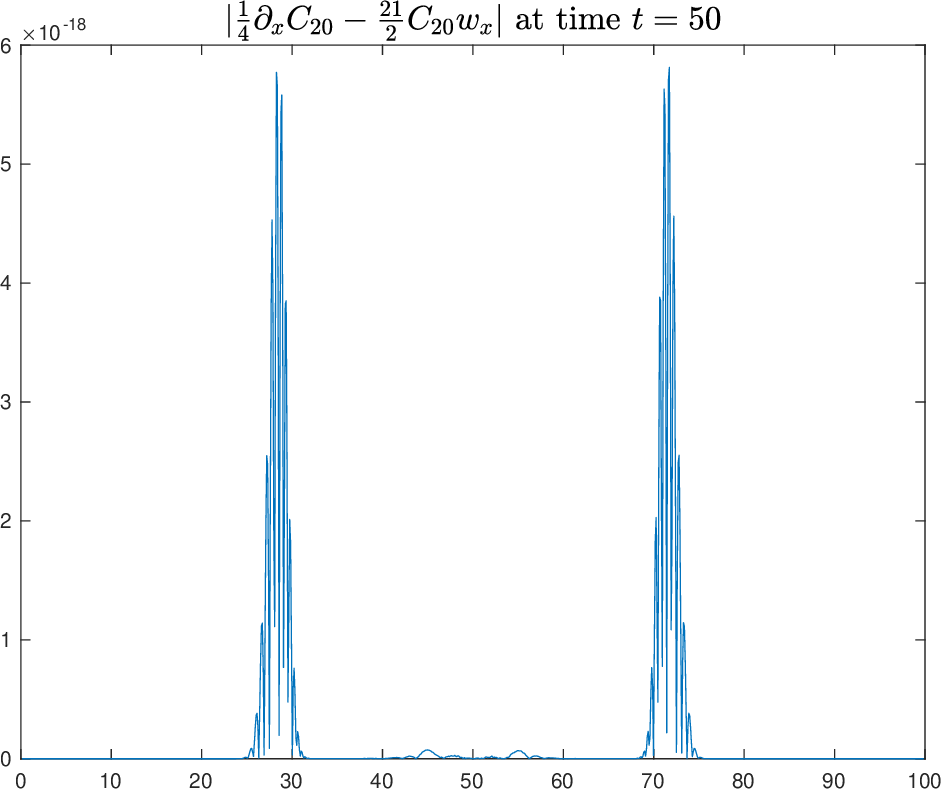}}
\caption{Error indicators for the resolved reference solution computed
at $t=50$.}
    \label{fig:errorResolved}%
  \end{figure}

Based on these observations, we choose the number of moment equations
used in the coupled moment system in the different regions of the
domain $[0,100]$ to compute an accurate and efficient approximation of
the solution of \autoref{exa:interface}.
We use $N=1$ for the intervals $[0,20]$ and $[80,100]$, $N=2$ for
$[20,25] \cup [75,80]$ and $N=3$ for $[25,75]$.
In \autoref{fig:Interface}, the first component $\rho$ of the solution
of the coupled moment system with different resolution in different
spatial regions and initial data as described in \autoref{exa:interface}
is shown as a blue dashed line.  We again compare this solution of the coupled moment system using different levels of detail with a highly resolved reference solution, which is given as a black solid line. The solution of the coupled moment system using different levels of resolution shows no unphysical values  and compares very well with the solution structure of the reference solution.
\begin{figure}[H]%
\captionsetup[subfigure]{}
  \centering
  \includegraphics[width=0.40\linewidth]{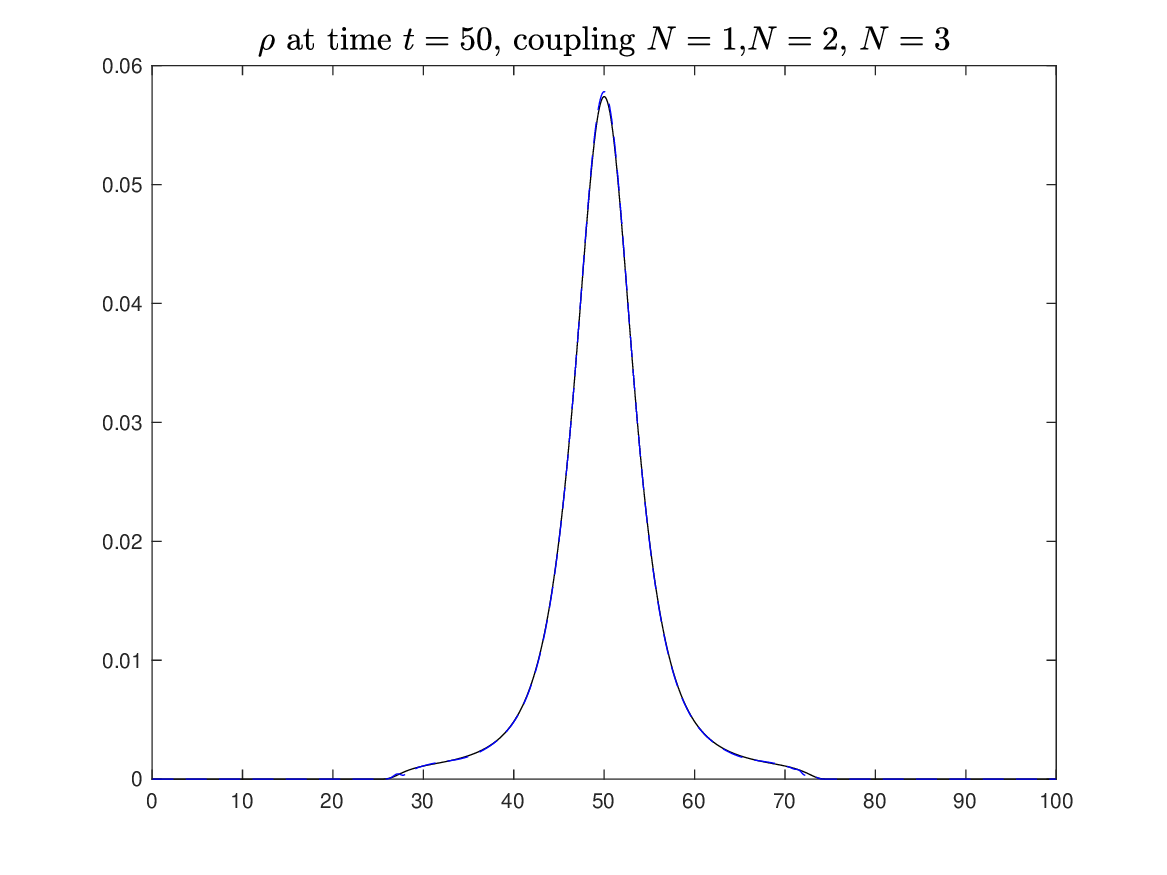}%
  \caption{Approximation of the coupled problem for shear flow as
    described in \autoref{exa:interface}. The blue dashed dotted curve
    shows the density at time $t=50$. We use $N=1$ for $ x \in [0,20]
    \cup  [80,100]$, $N=2$ for $x \in [20,25] \cup [75,80]$ and $N=3$
    for $x \in [25,75]$. The black solid line is a reference solution. }
    \label{fig:Interface}
  \end{figure}

 The comparison demonstrates that a local increase of the number of
 moment equations can avoid the unphysical solutions observed in the
 under-resolved case.
 The adaptive usage of
 moment systems of higher resolution leads to accurate approximations
 at lower computational costs.
The error indicators obtained from the residuum 
provide a useful selection criteria for choosing the number of
moments.

\subsection{Bulk-Coupling for Two-Dimensional Flow}	
\label{subsec:twodim}
We present the numerical discretisation of the two-dimensional moment
system (\ref{eqn:system2d}) coupled to the flow equation
(\ref{eqn:Navier}). The two-dimensional spatial domain
$\Omega:=[x_l,x_r]\times [z_l,z_r] $ is discretised as described in
\autoref{subsubsec:Wave2d}.

To apply the High-Resolution Wave Propagation Algorithm by LeVeque from \autoref{subsubsec:Wave2d}, the components of the discrete vector of moments $Q_{i,j}^n$ are defined as the average value over the $(i,j)$-th grid cell at time $t^n$, see (\ref{eqn:Q2d}). To solve the Navier-Stokes Equation with the projection method by Lee \cite{Lee}, the divergence-free velocity field $(u,w)$ in the two-dimensional Navier-Stokes equation is discretised on a staggered grid. While $(U_{i,j}^n,W_{i,j}^n)$ is defined at the cell center of $C_{i,j}$, the horizontal and vertical components of the discrete edge velocity field $(u_{i\pm1/2,j}^n,w_{i,j\pm1/2}^n)$ are defined at the midpoints of the interfaces $(x_{i\pm 1/2},z_j)$ and $(x_i,z_{j\pm 1/2})$ of the cell $C_{i,j}$. We compute the cell average $\pmb{U}_{i,j}^n = (U_{i,j}^n,W_{i,j}^n)$ over the $(i,j)$-th grid cell at time $t^n$ as
\begin{equation}
\pmb{U}_{i,j}^n \approx \frac{1}{\Delta x \Delta z}\int_{C_{i,j}}\pmb{u}\left(x,z,t^n\right) dx dz. 
\label{eqn:average}
\end{equation}
The discrete edge velocity is calculated by taking the average of the cell-centered values. For example, the left edge value of cell $C_{i,j}$ is
\begin{equation}
u_{i-1/2,j}^n = \frac{1}{2}(U_{i-1,j}^n+U_{i,j}^n). 
\label{eqn:edge}
\end{equation}
 The numerical solution of the coupled moment system for the
 two-dimensional flow problem is computed with the steps presented in
 \Cref{2dflow}.
 
 \begin{algorithm}[H]
\caption{Operator splitting algorithm for solving the coupled moment system for two-dimensional flow. }\label{2dflow}
\begin{itemize}
  \item[1.] $\frac{1}{2}\Delta t $ step on \hspace{2.5mm} $\partial_tQ(\pmb{x},t)=\phi(Q(\pmb{x},t))$. 
  \item[2.]   $\frac{1}{4}\Delta t $ step on \hspace{2.5mm} $w_t(\pmb{x},t)=-\frac{\delta}{Re}\rho$.   \hspace{8.5cm}\raisebox{1mm}{$\tikzmark{listing-2-end}$}
  
  \item[3.]   $\frac{1}{2}\Delta t $ step on \hspace{3mm} Navier-Stokes; Calculate $\partial_x u(\pmb{x},t)$, $\partial_z u(\pmb{x},t)$, $\partial_x w(\pmb{x},t)$, $\partial_z w(\pmb{x},t)$.
  \item[4.]    $\frac{1}{4}\Delta t $ step on \hspace{2.5mm}
    $\partial_t w(\pmb{x},t)=-\frac{\delta}{Re}\rho$. \hspace{7.5cm} \raisebox{-1mm}{$\tikzmark{listing-4-end}$}
  
   \item[5.]  $\Delta t $ step on \hspace{4.5mm} $\partial_tQ(\pmb{x},t)+A\partial_xQ(\pmb{x},t)+B\partial_zQ(\pmb{x},t)=0$.
   \item[6.]  $\frac{1}{4}\Delta t $ step on \hspace{2.5mm}
     $\partial_t w(\pmb{x},t)=-\frac{\delta}{Re}\rho$. \hspace{8.09cm} \raisebox{1mm}{$\tikzmark{listing-6-end}$}
    \item[7.] $\frac{1}{2}\Delta t $ step on \hspace{3mm} Navier-Stokes; Calculate $\partial_x u(\pmb{x},t)$, $\partial_z u(\pmb{x},t)$, $\partial_x w(\pmb{x},t)$, $\partial_z w(\pmb{x},t)$.
   \item[8.]  $\frac{1}{4}\Delta t $ step on \hspace{2.5mm}
     $\partial_t w(\pmb{x},t)=-\frac{\delta}{Re}\rho$.  \hspace{8.5cm}\raisebox{-1mm}{$\tikzmark{listing-8-end}$}
   \item[9.]  $\frac{1}{2}\Delta t $ step on \hspace{2.5mm} $\partial_tQ(\pmb{x},t)=\phi(Q(\pmb{x},t))$. 
\end{itemize}
\AddNote{listing-2-end}{listing-4-end}{listing-2-end}{ first half time step of Strang splitting for the flow equation}
\AddNote{listing-6-end}{listing-8-end}{listing-6-end}{ second half
  time step of Strang splitting for the flow equation}
\end{algorithm} 
In each time step, the system of ordinary differential equations resulting from the source term of the moment system is solved with the classical Runge-Kutta method. The Navier-Stokes equation is solved with the projection method by Long Lee \cite{Lee}. The solution is used to calculate the discrete derivatives
\begin{equation*}
\begin{array}{lll}
\partial_x u\left(x_i,y_j,t^n\right)&=&\dfrac{u_{i+\frac{1}{2},j}^n-u_{i-\frac{1}{2},j}^n}{\Delta x},\quad i=1,\ldots,m, \quad j=1,\ldots,m, \\[12pt]
\partial_z u\left(x_i,y_j,t^n\right)&=&\dfrac{u_{i,j+\frac{1}{2}}^n-u_{i,j-\frac{1}{2}}^n}{\Delta z},\quad i=1,\ldots,m, \quad j=1,\ldots,m, \\[12pt]
\partial_x w\left(x_i,y_j,t^n\right)&=&\dfrac{w_{i+\frac{1}{2},j}^n-w_{i-\frac{1}{2},j}^n}{\Delta x}, \quad i=1,\ldots,m, \quad j=1,\ldots,m, \\[12pt]
\partial_z w\left(x_i,y_jt^n\right)&=&\dfrac{w_{i,j+\frac{1}{2}}^n-w_{i,j-\frac{1}{2}}^n}{\Delta z}, \quad i=1,\ldots,m, \quad j=1,\ldots,m. \\[12pt]
\end{array}
\end{equation*}
The solution of the homogeneous system of moment equations is calculated with the high-resolution Wave Propagation Algorithm by LeVeque described in \autoref{subsubsec:Wave2d}. 

\subsection{Numerical Simulation for the Coupled Moment System in
  a two-dimensional Flow}
In the two-dimensional case we consider the sedimentation of a
droplet of rod-like particles. 
\begin{example}
We consider the two-dimensional moment system (\ref{eqn:system2d})
coupled to the flow equations (\ref{eqn:Navier}) on the domain
$[0,100] \times [0,100]$ with doubly periodic boundary conditions.
The initial values are set to
\begin{equation*}
  \begin{split}
u(x,y,0)  = v(x,y,0) & = 0\\    
\rho(x,y,0) & = \exp \left(-0.025 \left( (x-50)^2+(y-75)^2 \right)\right)\\
C_i(x,y,0) = S_i(x,y,0) & = 0, \quad i=1,\ldots, N
  \end{split}
\end{equation*}
We set the parameter values to $\delta = 1$, $Re = 1$ and vary the
rotational diffusion parameter to consider  $D_r=1$ and $D_r=0.1$.  
\end{example}
In  \autoref{fig:2d-1-N4} we show the sedimenting droplet at three
different times using $D_r=1$ and $N=4$. The initially circular
droplet deforms as it sediments.
\begin{figure}[H]%
\captionsetup[subfigure]{}
  \centering
  \includegraphics[width=0.3\linewidth]{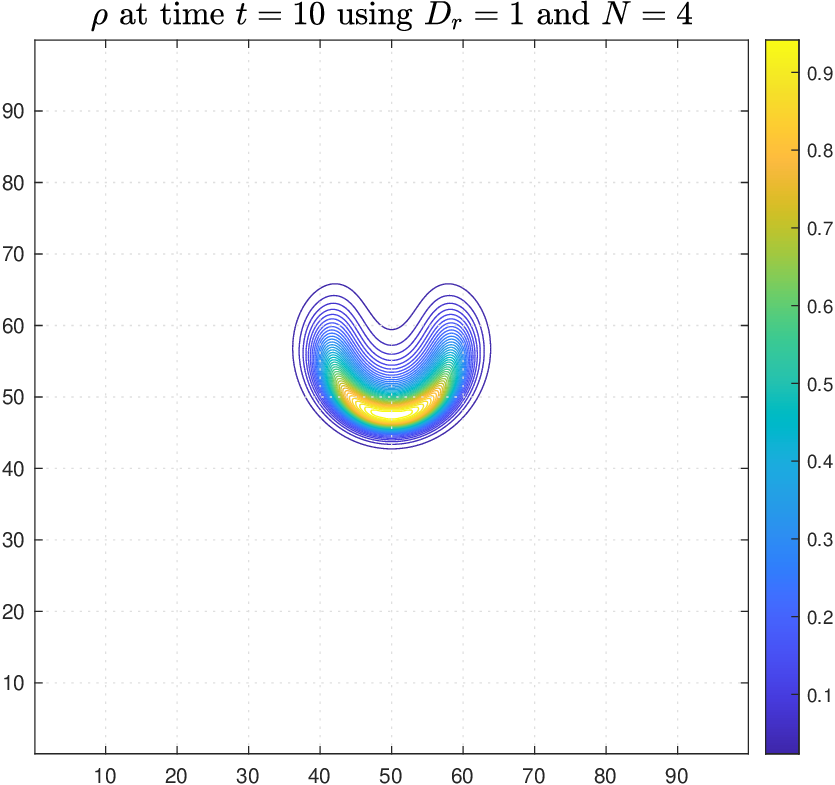}\hfil
  \includegraphics[width=0.3\linewidth]{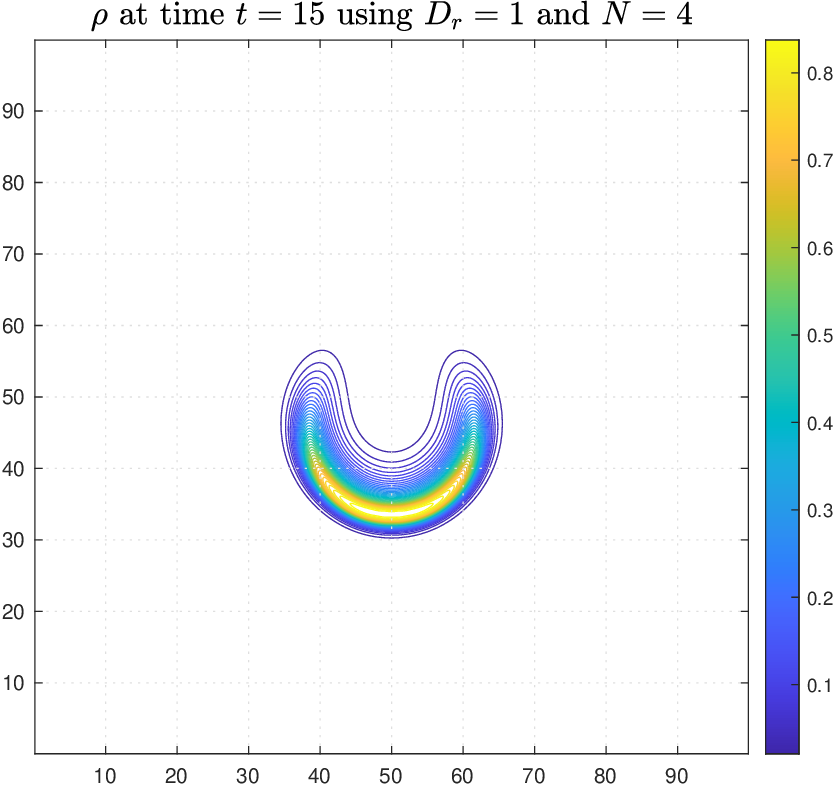}\hfil
  \includegraphics[width=0.3\linewidth]{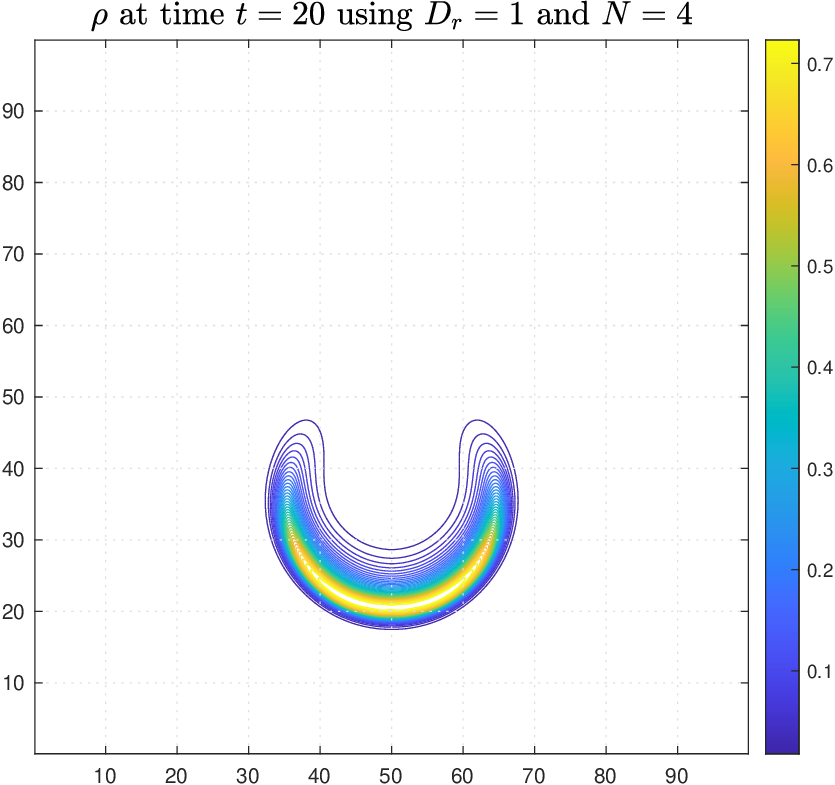}
  \caption{Approximation of the two-dimensional coupled problem using
    $D_r=1$ and $N=4$. Contour plots of density $\rho$ are shown at
    times $t=10, 15, 20$. }
    \label{fig:2d-1-N4}
  \end{figure}
The solution structure at 
time $t=20$ computed with fewer moment equations is shown in 
\autoref{fig:2d-1-N123}.
For $N=1$ differences in the solution structure are clearly
visible. For $N = 2, 3$ the solution structure compares well with those
observed for  $N=4$.  
\begin{figure}[H]%
\captionsetup[subfigure]{}
  \centering
  \includegraphics[width=0.3\linewidth]{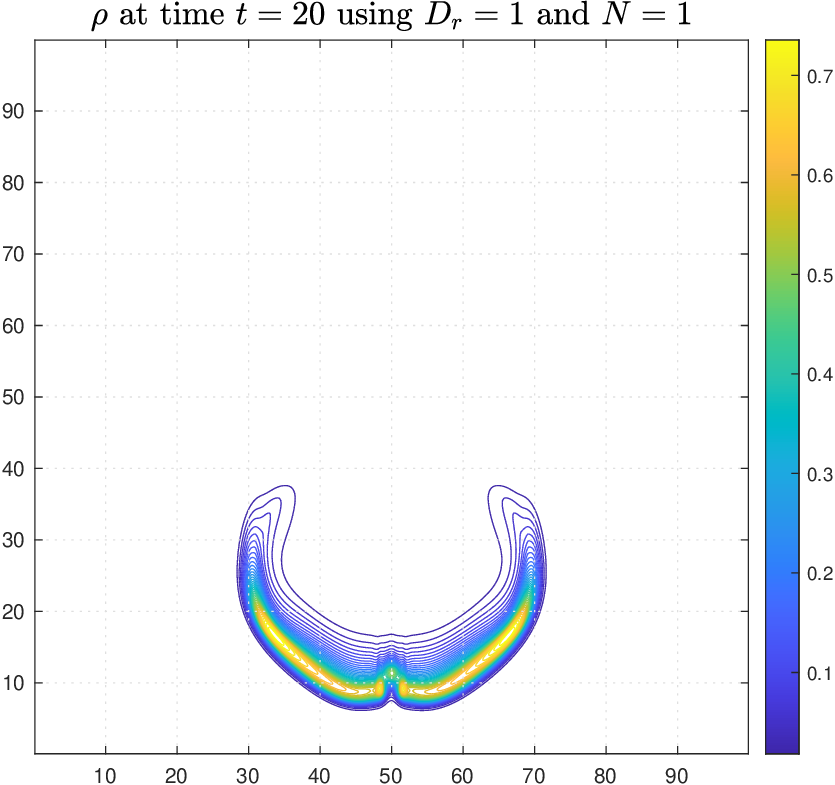}\hfil
  \includegraphics[width=0.3\linewidth]{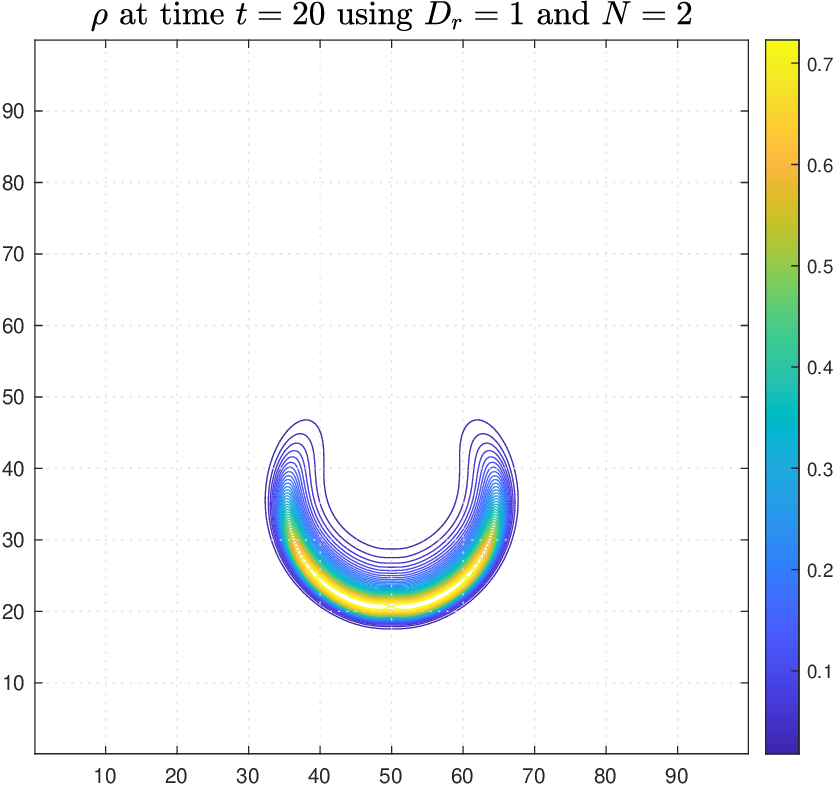}\hfil
  \includegraphics[width=0.3\linewidth]{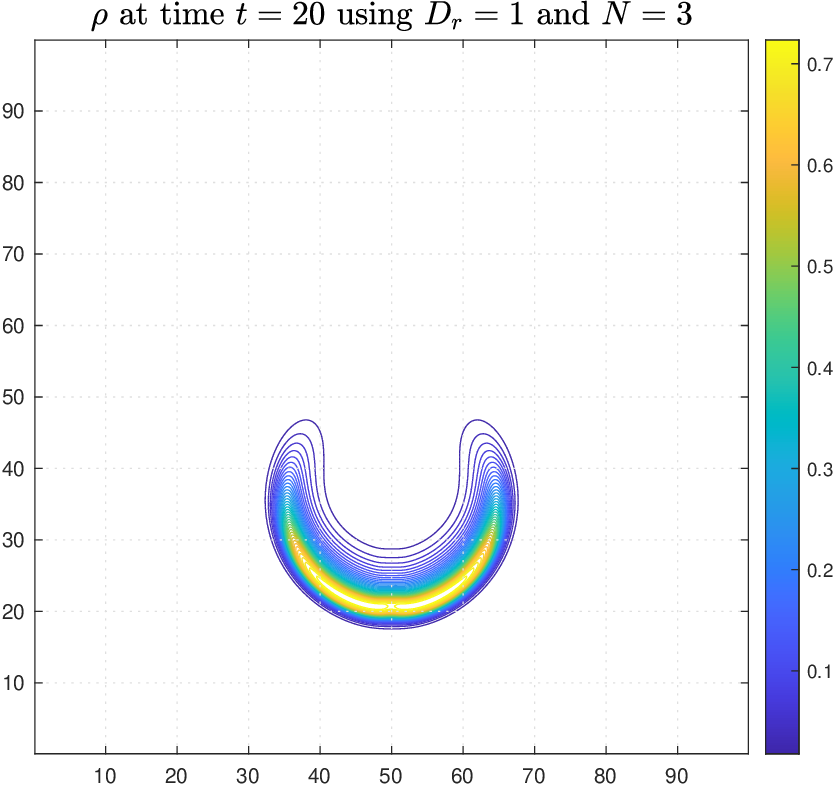}
  \caption{Approximation of the two-dimensional coupled problem at
    time $t=20$ using
    $D_r=1$ and from left to right $N=1,2,3$. }
    \label{fig:2d-1-N123}
  \end{figure}
  
Motivated by the error indicator derived for shear flow, we also
consider the quantities
\begin{equation*}
  \begin{split}
  |\hat{R}_{2N+2}| & := \left| -\frac{1}{4} \partial_x S_N - \frac{1}{4}
  \partial_z C_N - \frac{N+1}{2} (\partial_z w - \partial_x u) C_N - \frac{N+1}{2}
  (\partial_z u+\partial_x w)S_N \right|\\
|\hat{R}_{2N+3}| & := \left| \frac{1}{4} \partial_x C_N - \frac{1}{4}
  \partial_z S_N - \frac{N+1}{2} (\partial_z w-\partial_x u) S_N + \frac{N+1}{2}
  (\partial_z u+\partial_x w) C_N \right|.
\end{split}
\end{equation*}
In \autoref{fig:2d-R-1-N123} we show contour plots of
$|\hat{R}_{2N+2}|$ for $N=1,2,3$ at time $t=20$.
Contour plots of $|\hat{R}_{2N+3}|$ look similar and are therefore not
shown here.
The error indicator shows the expected behavior. In particular it
  becomes smaller as $N$ increases, indicating that
this quantity is well suited as  error indicator.
  \begin{figure}[H]%
\captionsetup[subfigure]{}
  \centering
  \includegraphics[width=0.3\linewidth]{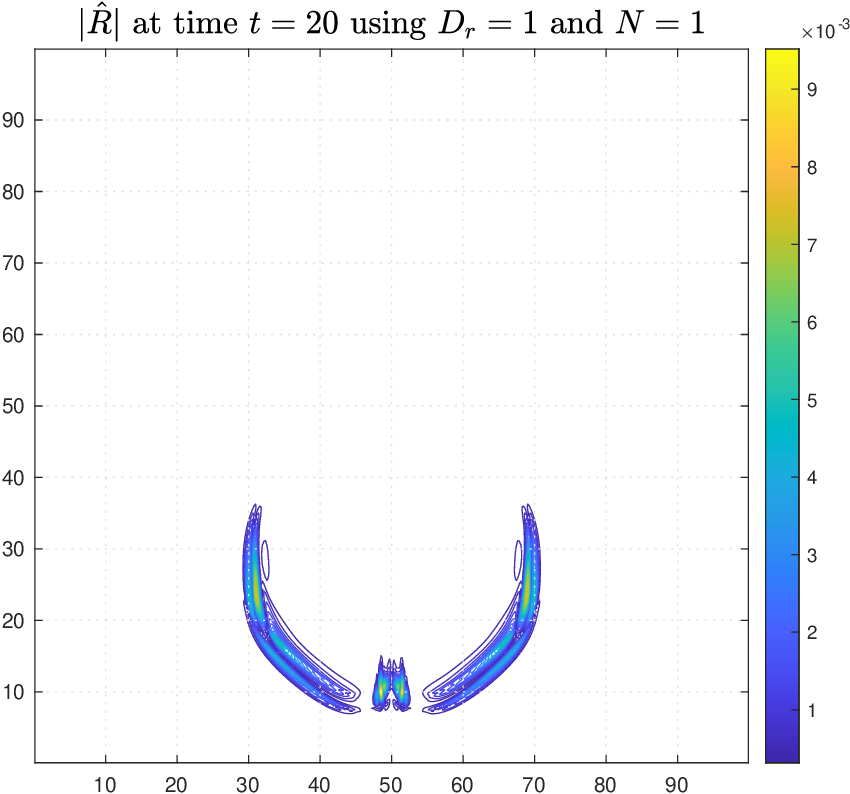}\hfil
  \includegraphics[width=0.3\linewidth]{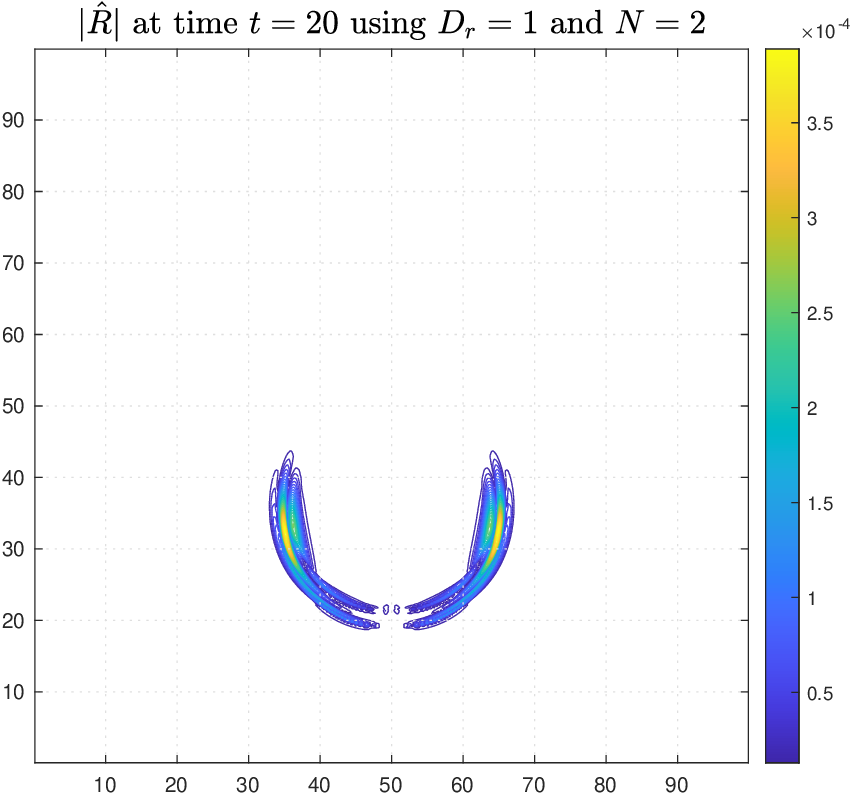}\hfil
  \includegraphics[width=0.3\linewidth]{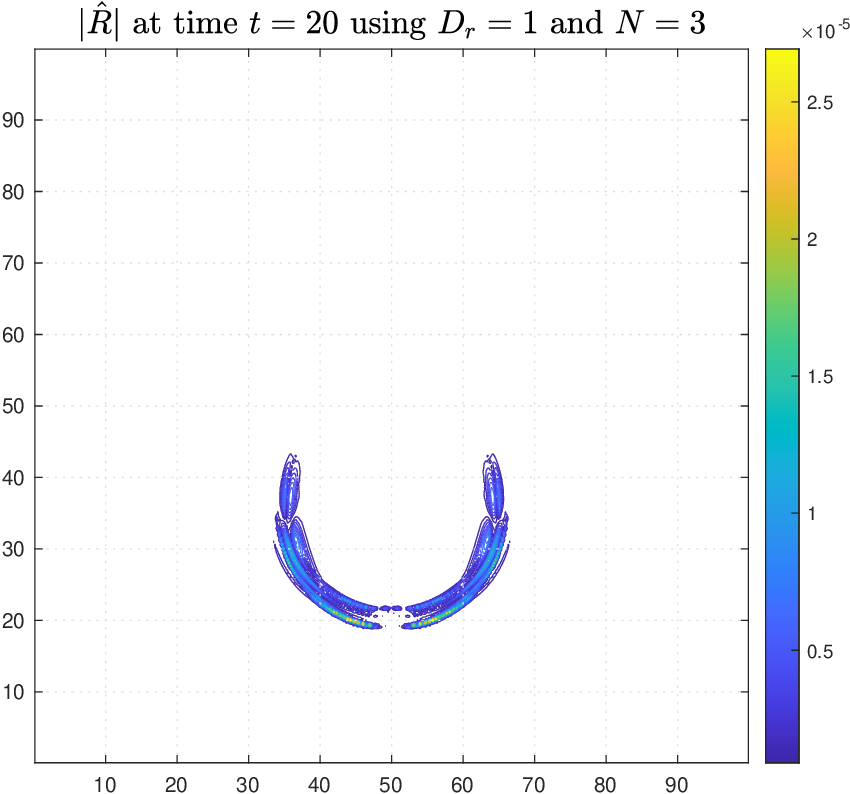}
  \caption{$|\hat{R}_{2N+2}|$ at time $t=20$ using
    $D_r=1$ and from left to right $N=1,2,3$. Note the different
    scales of the colorbar. }
    \label{fig:2d-R-1-N123}
  \end{figure}

In \autoref{fig:2d-01-N4} we show the sedimenting droplet at
different times for $D_r=0.1$ and $N=4$. In this case the droplet
starts to split into three smaller droplets with high density. 
\begin{figure}[H]%
\captionsetup[subfigure]{}
  \centering
  \includegraphics[width=0.3\linewidth]{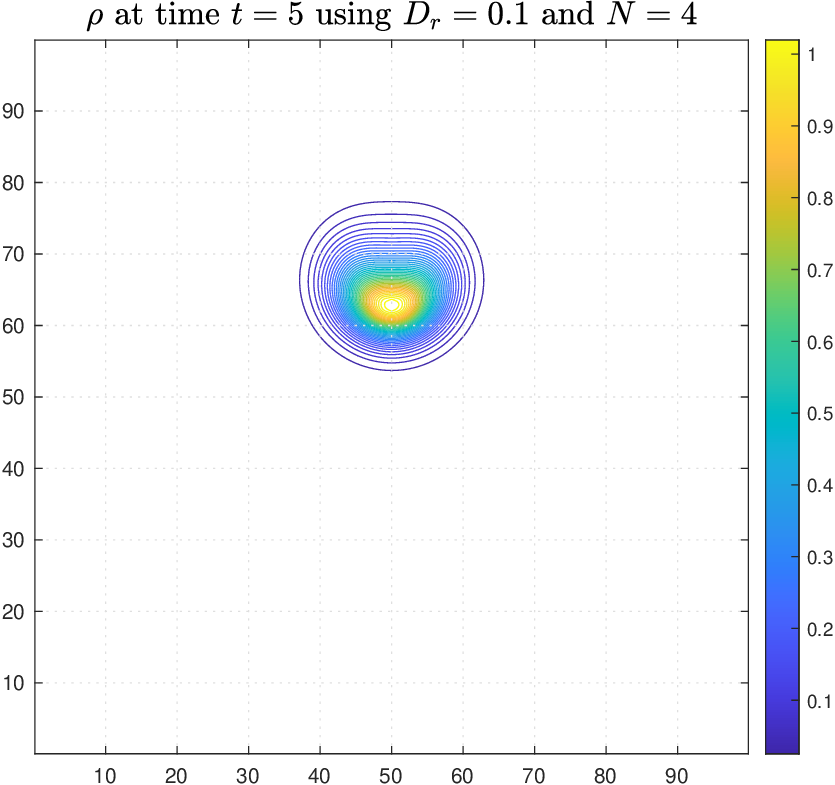}\hfil
  \includegraphics[width=0.3\linewidth]{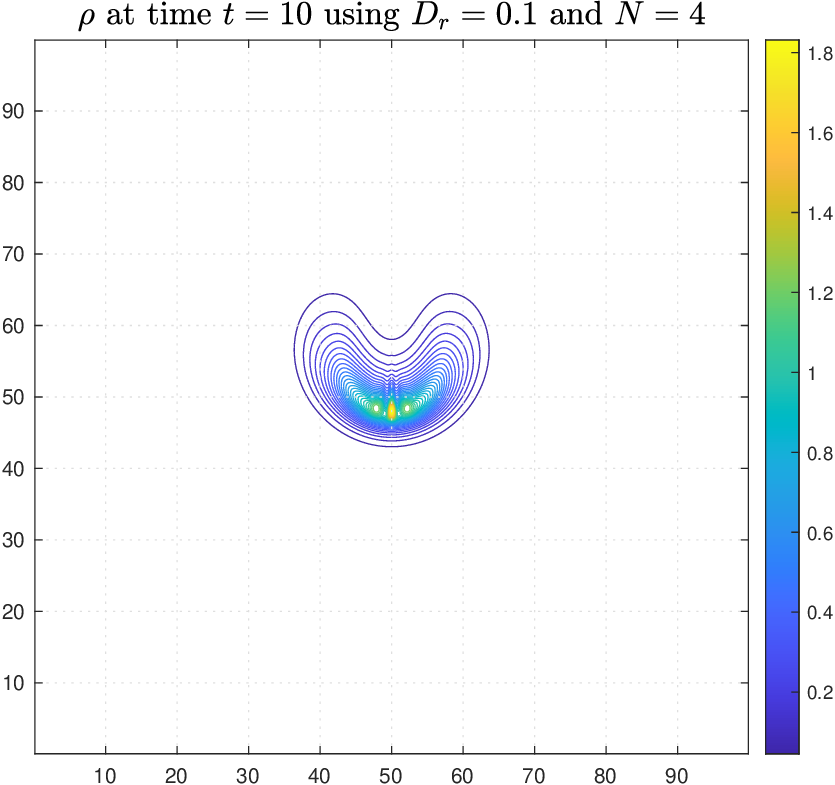}\hfil
  \includegraphics[width=0.3\linewidth]{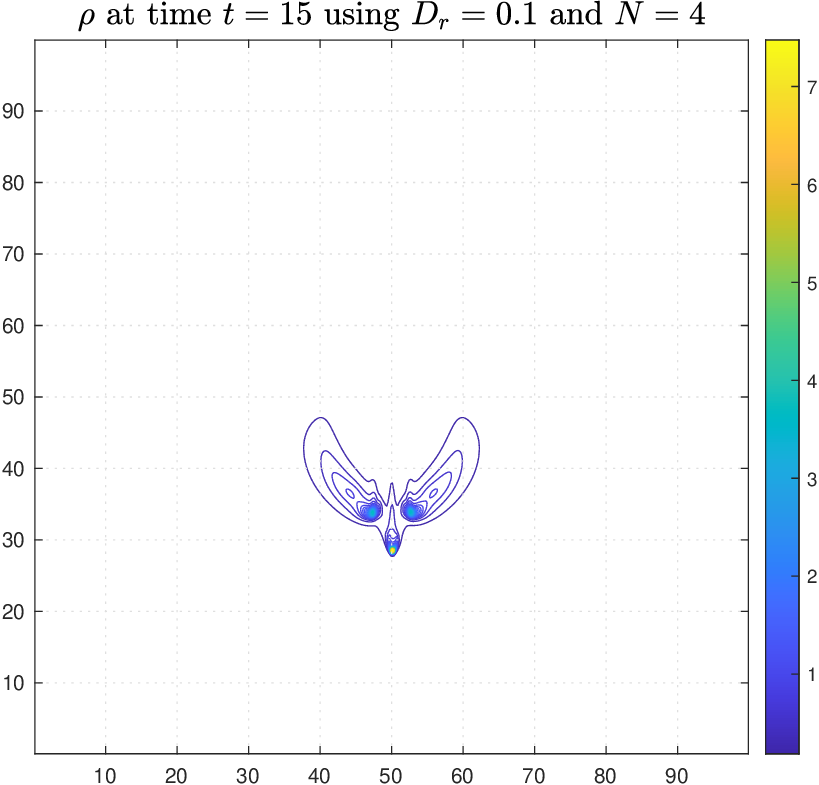}
  \caption{Approximation of the two-dimensional coupled problem using
    $D_r=0.1$ and $N=4$. Contour plots of density $\rho$ are shown at
     times $t=5, 10, 15$. }
    \label{fig:2d-01-N4}
  \end{figure}
In \autoref{fig:2d-01-N123} we show the solution at time $t=15$
computed using  $N=1, 2, 3$.
\begin{figure}[H]%
\captionsetup[subfigure]{}
  \centering
  \includegraphics[width=0.3\linewidth]{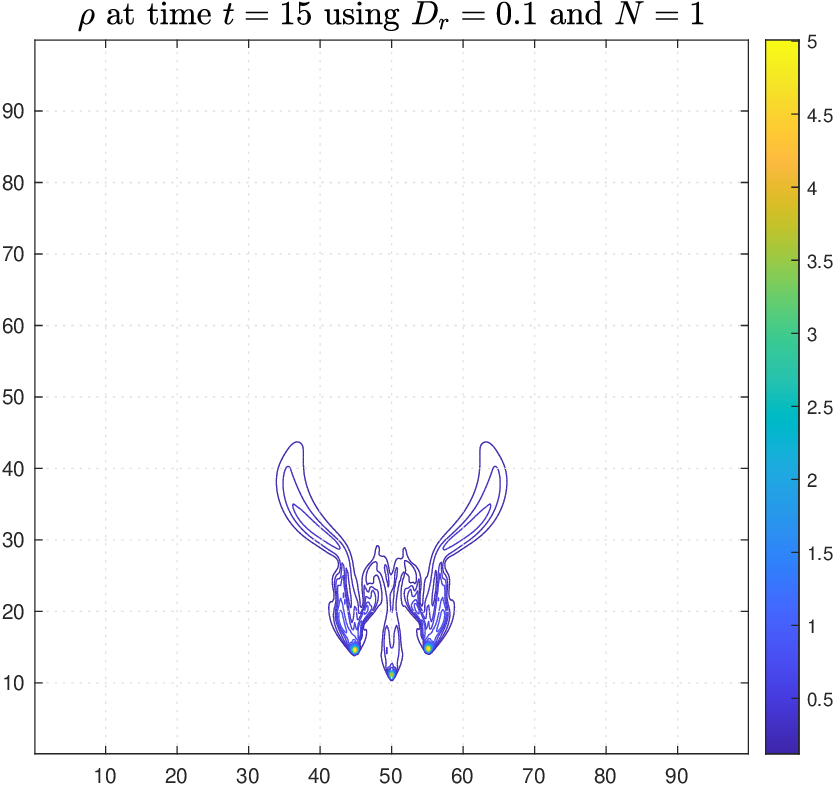}\hfil
  \includegraphics[width=0.3\linewidth]{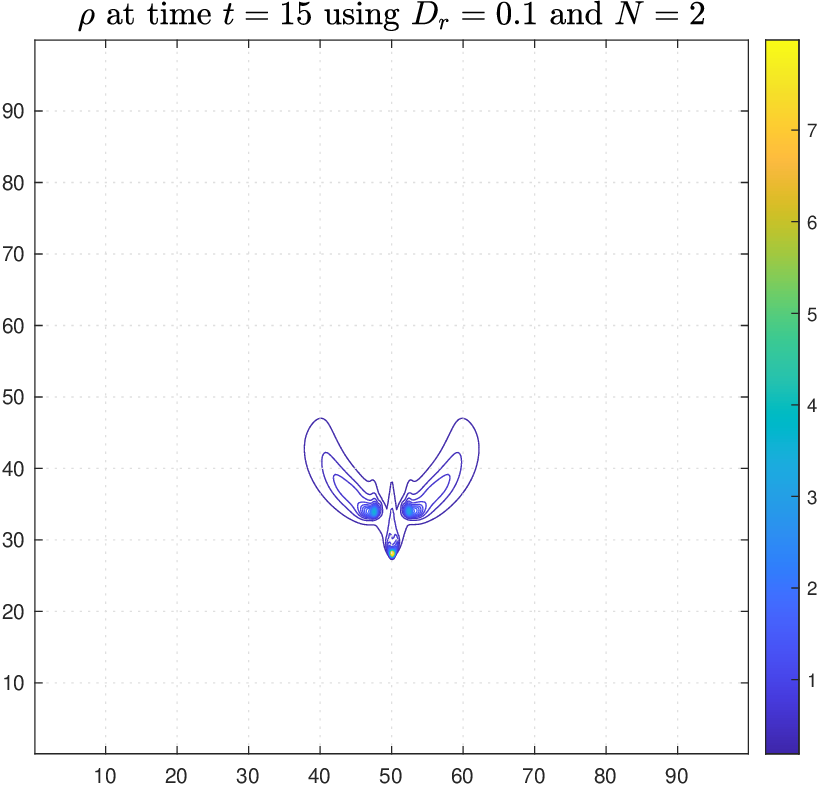}\hfil
  \includegraphics[width=0.3\linewidth]{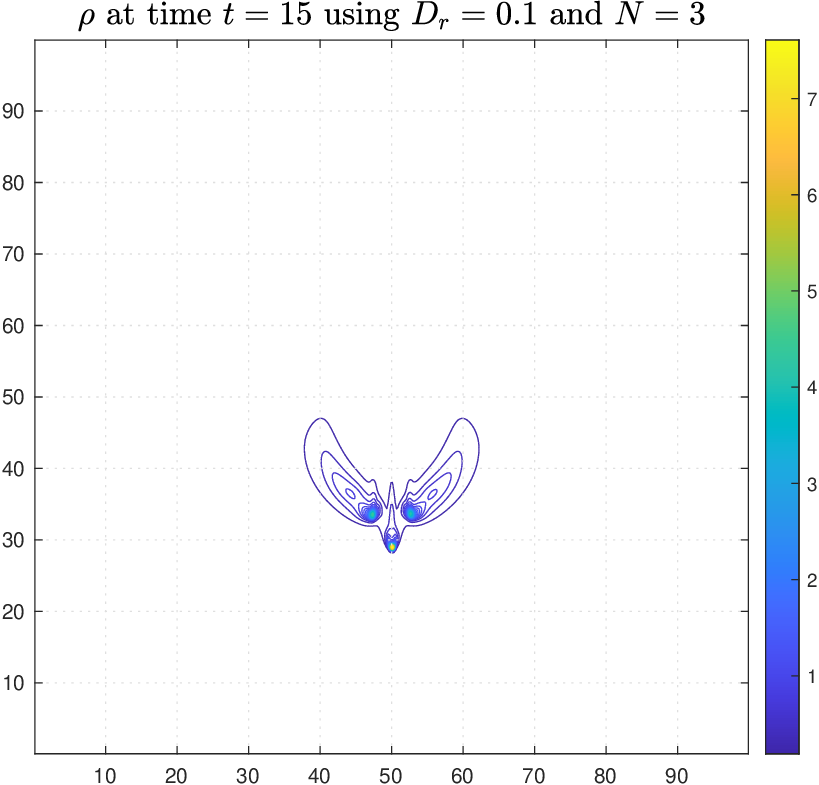}
  \caption{Approximation of the two-dimensional coupled problem at
    time $t=15$ using
    $D_r=0.1$ and $N=1,2,3$. }
    \label{fig:2d-01-N123}
  \end{figure}  
  For $N=1$ and $N=2$ we observe small negative values of density
  which are unphysical. In \autoref{fig:2d-R-01-N123} we show
  contour plots of the
  corresponding error indicators $|\hat{R}_{2N+2}|$. 
 \begin{figure}[H]%
\captionsetup[subfigure]{}
  \centering
  \includegraphics[width=0.3\linewidth]{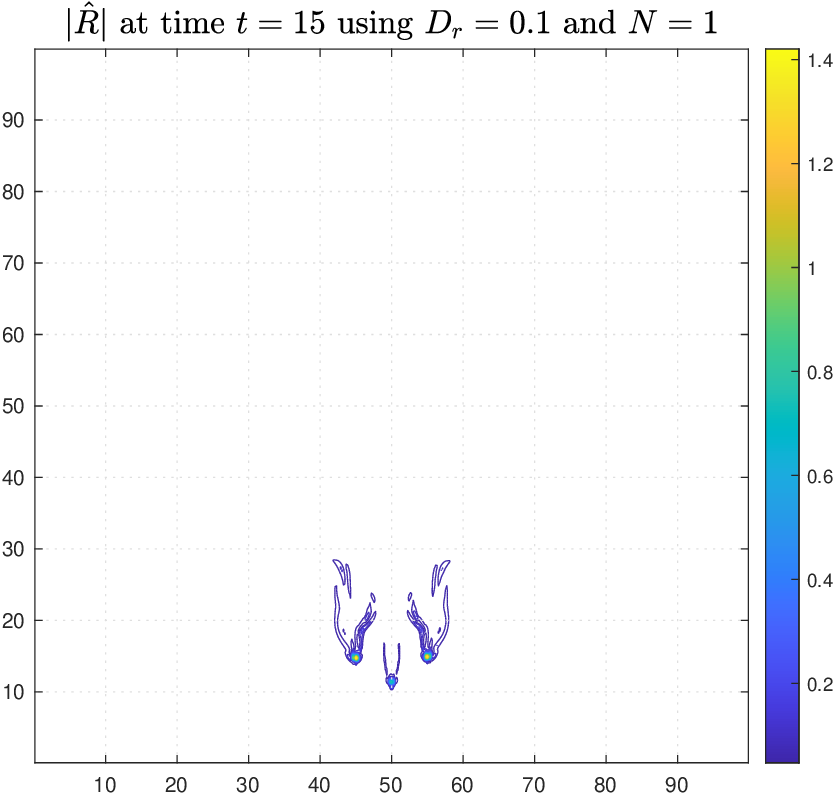}\hfil
  \includegraphics[width=0.3\linewidth]{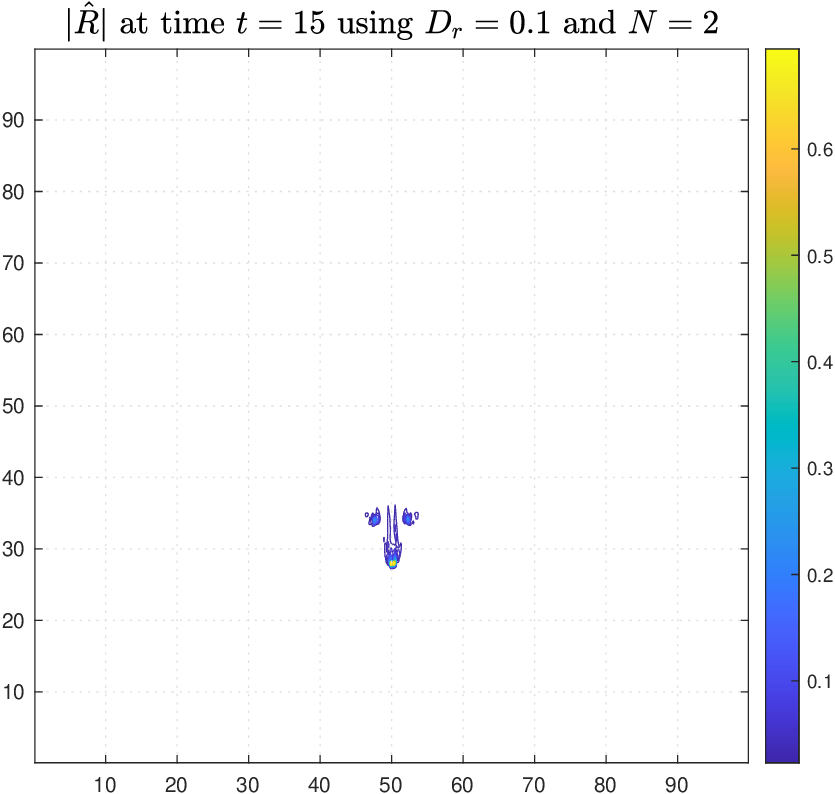}\hfil
  \includegraphics[width=0.3\linewidth]{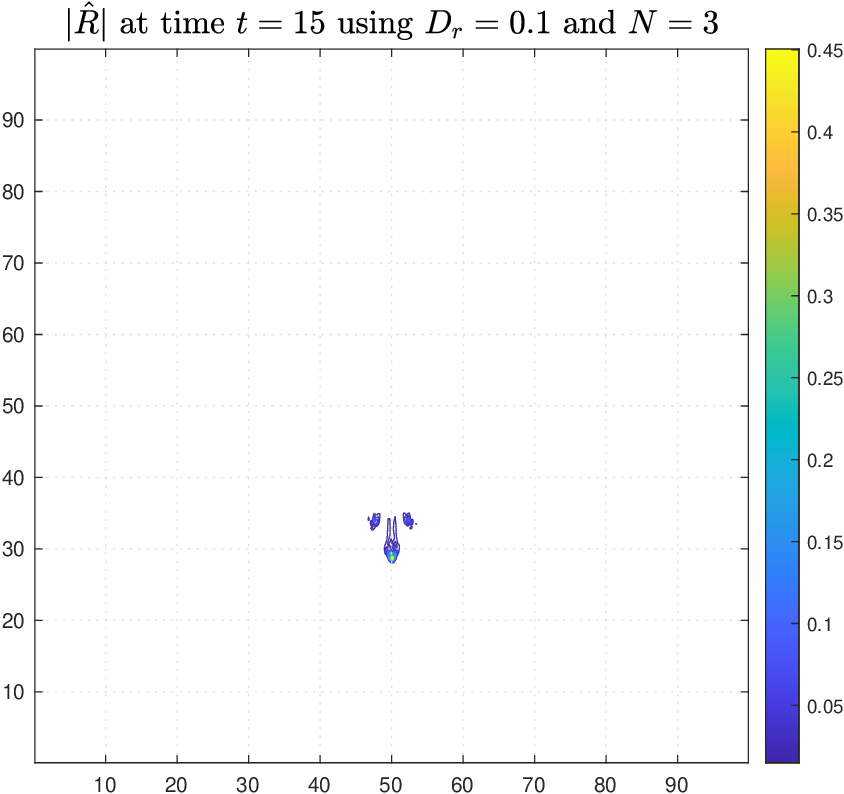}
  \caption{$|\hat{R}_{2N+2}|$ at time $t=15$ using
    $D_r=0.1$ and from left to right $N=1,2,3$. Note the different
    scales of the colorbar.}
    \label{fig:2d-R-01-N123}
  \end{figure}  
The error indicator predicts a relatively large error in regions where
the concentration is very large. In such regions an assumption
analogously to inequality (\ref{eqn:inequTheorem}) from 
\autoref{theorem:errorEstimate} might not even be satisfied and the use of
the considered quantity as error indicator might not be justified.
Furthermore, note that high concentrations of rod-like particles located at small
regions in space might arise as solutions of the coupled system
(\ref{eqn:allgemein}) but might not correspond to solution structures
observed in the sedimentation process. A reason  for this discrepancy
is that our coupled kinetic-fluid model was derived under the
assumption of a dilute suspension. In the concentrated regime
so-called excluded volume effects would have an influence on the
microscopic orientation and consequently on the solution structure of the
coupled model.
In the future we plan to include such effects
into the model equations. 

  All two-dimensional computations were performed on a grid with $512
  \times 512$ grid cells.

\section{Conclusions}
We presented a numerical discretisation of the coupled hyperbolic
moment systems which approximate a simplified multiscale model for
sedimentation in suspensions of rod-like particles. For the shear flow
problem, an experimental study confirmed second order convergence.
We adaptively adjusted the
level of detail of the model by coupling moment systems with different
numbers of moment equations. We derived a conservative high-resolution
finite volume method for solving the moment systems with different
resolution. A theoretically justified error indicator
  was introduced and used to determine regions in which an accurate
  approximation requires a higher number of moment equations.

A future goal  is the derivation of physically more
realistic, moment based models and efficient numerical methods that
approximate the dynamics of sedimenting rod-like particles dispersed
in a three-dimensional fluid.

\section*{Acknowledgments}
Funded by the Deutsche Forschungsgemeinschaft (DFG, German Research
Foundation) - SPP 2410 Hyperbolic Balance Laws in Fluid Mechanics:
Complexity, Scales, Randomness (CoScaRa), within the Project  ``A
posteriori error estimators for statistical solutions of barotropic
Navier-Stokes equations''  525877563  and FOR 5409
Structure-preserving Numerical Methods for Bulk and Interface Coupling
of Heterogeneous Models, within the Project ``Structure-Preserving
Methods for Complex Fluids''  463312734.

\bibliographystyle{abbrvurl}
\bibliography{refs}

\end{document}